\newtheorem{theorem}{Theorem}
\theoremstyle{plain}
\newtheorem{corollary}{Corollary}
\newtheorem{definition}{Definition}
\newtheorem{lemma}{Lemma}
\newtheorem{proposition}{Proposition}
\newtheorem{remark}{Remark}
\numberwithin{equation}{section}
\begin{document}
\title{MOTIVIC ZETA FUNCTIONS FOR CURVE SINGULARITIES}
\author{J. J. Moyano-Fern\'{a}ndez}
\address{Institut f\"ur Mathematik, Universit\"at Osnabr\"uck. Albrechtstrasse 28a,
49076 Osnabr\"uck, Deutschland}
\email{jmoyano@mathematik.uni-osnabrueck.de}
\author{W. A. Z\'{u}\~{n}iga-Galindo}
\address{Centro de Investigaci\'{o}n y de Estudios Avanzados del I.P.N., Departamento
de Mate\-m\'{a}ticas, Av. Instituto Polit\'{e}cnico Nacional 2508, Col. San
Pedro Zacatenco, M\'{e}xico D.F., C.P. 07360, M\'{e}xico }
\email{wzuniga@math.cinvestav.mx}
\thanks{The first author was partially supported by the grant MEC MTM2007-64704, by
Junta de CyL VA065A07, and by the grant DAAD-La Caixa.}
\subjclass[2000]{Primary 14H20, 14G10; Secondary 32S40, 11S40}
\keywords{Curve singularities, zeta functions, Poincar\'{e} series, motivic integration, monodromy.}

\begin{abstract}
Let $X$ be a complete, geometrically irreducible, singular, algebraic curve
defined over a field of characteristic $p$ big enough. Given a local ring
$\mathcal{O}_{P,X}$ at a rational singular point $P$ of $X$, we attached a
universal zeta function which is a rational function and admits a functional
equation if $\mathcal{O}_{P,X}$\ is Gorenstein. This universal zeta function
specializes to other known zeta functions and Poincar\'{e} series attached to
singular points of algebraic curves. In particular, for the local ring
attached to a complex analytic function in two variables, our universal zeta
function specializes to the generalized Poincar\'{e} series introduced by
Campillo, Delgado and Gusein-Zade.

\end{abstract}
\maketitle

\section{Introduction}

Let $X$ be a complete, geometrically irreducible, singular, algebraic curve
defined over a finite field $\mathbb{F}_{q}$. In \cite{Z1} the second author
introduced a zeta function \ $Z($\textrm{Ca}$(X),T)$ associated to the
effective Cartier divisors on $X$. Other types of zeta functions associated to
singular curves over finite fields were introduced in \cite{Ga}, \cite{Green},
\cite{Stohr}, \cite{stohr}, \cite{Z3}. The zeta function $Z($\textrm{Ca}%
$(X),T)$\ admits an Euler product with non-trivial factors at the singular
points of $X$. If $P$ is a rational singular point of $X$, then the local
factor $Z_{\mathrm{Ca}(X)}(T,q,\mathcal{O}_{P,X})$ \ at $P$ is a rational
function of $T$ depending on $q$ and the completion $\widehat{\mathcal{O}%
}_{P,X}$ of the local ring $\mathcal{O}_{P,X}$ of $X$ at $P$. If the residue
field of $\widehat{\mathcal{O}}_{P,X}$ is not too small, then $Z_{\mathrm{Ca}%
(X)}(T,q,\mathcal{O}_{P,X})$ depends only on the semigroup of $\widehat
{\mathcal{O}}_{P,X}$ (see \cite[Theorem 5.5]{Z1}). Thus, if $\widehat
{\mathcal{O}}_{P,X}\cong\mathbb{F}_{q} [\![ x,y ] \!] /\left(  f(x,y)\right)
$, then $Z_{\mathrm{Ca}(X)}(T,q,\mathcal{O}_{P,X})$ becomes a complete
invariant of the equisingularity class of the algebroid curve $\widehat
{\mathcal{O}}_{P,X}$ (see \cite{CDG}, \cite{W}, \cite{ZA2}). Motivated by
\cite{DL0}, in \cite{Z2} the second author computed several examples showing
that $\lim_{q\rightarrow1}Z_{\mathrm{Ca}(X)}(T,q,\mathcal{O}_{P,X})$ equals
the zeta function of the monodromy of the (complexification) of $f$ at the
origin (see \cite{A}, and the examples in Section \ref{ejemplos}). This paper
aims to study this phenomenon. \medskip

By using motivic integration in the spirit of Campillo, Delgado and
Gusein-Zade we attach to a local ring $\mathcal{O}_{P,X}$ of an algebraic
curve $X$ a `universal zeta function' (see Definition \ref{def5}, Theorem
\ref{teo1}, De\-finition \ref{universalzetafunction}). This zeta function
specializes to $Z_{\mathrm{Ca}(X)}(T,q,\mathcal{O}_{P,X})$ (see Lemma
\ref{TheoremB} and Theorem \ref{TheoremD}). We also establish that
$\lim_{q\rightarrow1}Z_{\mathrm{Ca}(X)}(T,q,\mathcal{O}_{P,X})$ equals to a
zeta function of the monodromy of a reduced complex mapping in two variables
at the origin (see Theorem \ref{TheoremD}). A key ingredient is a result of
Campillo, Delgado and Gusein-Zade relating the Poincar\'{e} series attached to
complex analytic functions in two variables and the zeta function of the
monodromy (see \cite{CDG}, and Theorem \ref{TheoremC}). From the point of view
of the work of Campillo, Delgado and Gusein-Zade, this paper deals with
Poincar\'{e} series attached to local rings $\mathcal{O}_{P,X}$ when the
ground field is big enough (see Lemma \ref{rem5}). In particular, for the
local ring attached to a complex analytic function in two variables, our
universal zeta function specializes to the generalized Poincar\'{e} series
introduced in \cite{CDG4}, and then a relation with the Alexander polynomial
holds as a consequence of \cite{CDG1}. We also obtain explicit formulas that
give precise information about the degree of the numerators of such
Poincar\'{e} series and functional equations (see Theorem \ref{TheoremE} and
the corollaries following it). Our results suggest that the factor
$Z_{\mathrm{Ca}(X)}(T,q,\mathcal{O}_{P,X})$ is the `monodromy zeta function of
$\mathcal{O}_{P,X}$'. In order to understand this, we believe that a
cohomological theory for the universal zeta functions should be developed.

Finally, we want to comment that the connections between zeta functions
introduced here and the motivic zeta functions of Kapranov \cite{Ka} and
Baldassarri-Deninger-Naumann \cite{B-D-N} are unknown. However, we believe
that the zeta functions introduced here are factors of motivic zeta functions
of Baldassarri-Deninger-Nau\-mann type for singular curves. In a forthcoming
paper the authors plan to study this connection. For a general discussion
about motivic zeta functions for curves the reader may consult \cite[and the
references therein]{An} and \cite{DL1}.

\textbf{Acknowledgement.} The authors wish to thank the referee for his or her
useful comments, which led to an improvement of this work.

\section{\label{semigroup}The Semigroup of Values of a Curve Singularity}

Let $X$ be a complete, geometrically irreducible, algebraic curve defined over
a field $k$, with function field $K/k$. Let $\widetilde{X}$ be the
normalization of $X$ over $k$ and let $\pi:\widetilde{X}\rightarrow X$ be the
normalization map. Let $P\in X$ be a closed point of $X$ and $O_{P}=O_{P,X}$
the local ring of $X$ at $P$. Let $Q_{1},\ldots,Q_{d}$ be the points of
$\widetilde{X}$ lying over $P$, i.e., $\pi^{-1}\left(  P\right)  =\left\{
Q_{1},\ldots,Q_{d}\right\}  $, and let $O_{Q_{1}},\ldots,O_{Q_{d}}$\ be the
corresponding local rings at these points. Since the function fields of
$\widetilde{X}$ and $X$ are the same, and $\widetilde{X}$ is a non-singular
curve, the local rings $O_{Q_{1}},\ldots,O_{Q_{d}}$\ are valuation rings of
$K/k$ over $O_{P}$. The integral closure of $O_{P}$ in $K/k$ is $\mathbb{O}%
_{P}=$ $O_{Q_{1}}\cap\ldots\cap O_{Q_{d}}$.

Let $\widehat{\mathbb{O}}_{P}$ be the completion of $\mathbb{O}_{P}$ with
respect to its Jacobson ideal, and let $\widehat{O}_{P}$ be, respectively
$\widehat{O}_{Q_{i}}$ for $i=1,\ldots,d$, the completion of $O_{P}$,
respectively of $O_{Q_{i}}$ for $i=1,\ldots,d$, with respect to the topology
induced by their maximal ideals. We denote by $B_{P}^{(j)}$, $j$ $=1,\ldots
,d$, the minimal primes of $\widehat{O}_{P}$. Then we have the following diagram:%

\[%
\begin{array}
[c]{lll}%
\widehat{\mathbb{O}}_{P} & \underrightarrow{\cong} & \widehat{O}_{Q_{1}}%
\times\ldots\times\widehat{O}_{Q_{d}}\\
\uparrow &  & \uparrow\\
\widehat{O}_{P} & \underrightarrow{\underline{\varphi}} & \widehat{O}%
_{B_{P}^{(1)}}\times\ldots\times\widehat{O}_{B_{P}^{(d)}},
\end{array}
\]
where $\underline{\varphi}$ is the diagonal morphism. Since $\widehat{O}_{P}$
is a reduced ring (cf. \cite[Theorem 1]{Ros1}) and \cite[proof of Satz
3.6]{H-K}), $\underline{\varphi}$ is one to one. Thus we have a bijective
correspondence \ between the $\widehat{O}_{Q_{i}}$'s and $\widehat{O}%
_{B_{P}^{(i)}}$'s. We call the rings $\widehat{O}_{B_{P}^{(i)}}$ \textit{the
branches} of $\widehat{O}_{P}$. By the Cohen structure theorem for complete
regular local rings, each $\widehat{O}_{Q_{i}}$ is isomorphic to $k_{i} [\![
t_{i} ]\!] $, $i=1,\ldots,d$, where $k_{i}$ is the residue field of
$\widehat{O}_{Q_{i}}$.

We will say that $\widehat{O}_{P}$ is\textit{ totally rational} if all rings
$\widehat{O}_{Q_{i}}$, for $i=1,\ldots,d$, have $k$ as residue field.

\textit{From now on we assume that }$\widehat{O}_{P}$\textit{ is totally
rational ring and identify }$\widehat{O}_{P}$\textit{ with }$\varphi\left(
\widehat{O}_{P}\right)  $. Let $v_{i}$ denote the valuation associated with
$\widehat{O}_{Q_{i}}$, $i=1,\ldots,d$. By using these valuations we define
$\underline{v}\left(  \underline{z}\right)  =\left(  v_{1}\left(
z_{1}\right)  ,\ldots,v_{d}\left(  z_{d}\right)  \right)  $, for any non-zero
divisor $\underline{z}=\left(  z_{1},\ldots,z_{d}\right)  \in\widehat
{\mathbb{O}}_{P}$.

\textit{The semigroup }$S$\ \textit{of values} \textit{of }$\widehat{O}_{P}$
consists of all the elements of the form $\underline{v}\left(  \underline
{z}\right)  =\left(  v_{1}\left(  z_{1}\right)  ,\ldots,v_{d}\left(  z_{d}
\right)  \right)  \in\mathbb{N}^{d}$ for all the non-zero divisors
$\underline{z}\in\widehat{O}_{P}$. Observe that, by definition, the semigroup
of $\widehat{O}_{P}$ coincides with the semigroup of values of $O_{P}$.

We set $\underline{z}=\underline{t}^{\underline{n}}\underline{\mu}:=\left(
t_{1}^{n_{1}},\ldots,t_{d}^{n_{d}}\right)  \left(  \mu_{1},\ldots,\mu
_{d}\right)  =\left(  t_{1}^{n_{1}}\mu_{1},\ldots,t_{d}^{n_{d}}\mu_{d}\right)
$, with $\underline{\mu}=\left(  \mu_{1},\ldots,\mu_{d}\right)  \in
\widehat{\mathbb{O}}_{P}^{\times}$. With this notation, the ideal generated by
a non-zero divisor of $\widehat{\mathbb{O}}_{P}$ has the form $\underline
{t}^{\underline{n}}\widehat{\mathbb{O}}_{P}$, for some $\underline{n}%
\in\mathbb{N}^{d}$.

We set $\underline{1}:=\left(  1,\ldots,1\right)  \in\mathbb{N}^{d}$ and, for
$\underline{n}=\left(  n_{1},\ldots,n_{d}\right)  \in\mathbb{N}^{d}$,
$\left\Vert \underline{n}\right\Vert :=n_{1}+\ldots+n_{d}$. We introduce a
partial order in $\mathbb{N}^{d}$, \textit{the product order}, by taking
$\underline{n}\geq\underline{m}$, if $n_{i}\geq m_{i}$ for $i=1,\ldots,d$.

There exists $\underline{c}_{P}=\left(  c_{1},\ldots,c_{d}\right)
\in\mathbb{N}^{d}$ minimal for the product order such that $\underline{c}%
_{P}+\mathbb{N}^{d}\subseteq S$. This element is called\textit{\ the
conductor} of $S$. The\textit{\ conductor ideal} $\widehat{F}_{P}$ of
$\widehat{O}_{P}$ is $\underline{t}^{\underline{c}_{P}}\widehat{\mathbb{O}%
}_{P}$. This is the largest common ideal of $\widehat{O}_{P}$ and
$\widehat{\mathbb{O}}_{P}$. The \textit{\ singularity degree } $\delta_{P}%
$\ of $\widehat{O}_{P}$ is defined as $\delta_{P}:=\dim_{k}\widehat
{\mathbb{O}}_{P}/\widehat{O}_{P}<\infty$ \ (see e.g. \cite[Chapter IV]{Ser}).
If $\widehat{O}_{P}$ is a Gorenstein ring, the singularity degree is related
to the conductor by the equality $\left\Vert \underline{c}_{P}\right\Vert
=2\delta_{P}$ (see e.g. \cite[Chapter IV]{Ser}). By using the fact that
$\widehat{O}_{P}/\widehat{F}_{P}$ is a $k$-subalgebra of $\widehat{\mathbb{O}%
}_{P}/\widehat{F}_{P}$ of codimension $\delta_{P}$, that $\widehat{\mathbb{O}%
}_{P}/\widehat{F}_{P}$ is a finite dimensional $k$-algebra, and \ that
$\widehat{F}_{P}$ is a common ideal of $\widehat{O}_{P}$ and $\widehat
{\mathbb{O}}_{P}$, we have%
\begin{equation}
\widehat{O}_{P}=\left\{  \left(
{\textstyle\sum\nolimits_{i=0}^{\infty}}
a_{i,1}t_{1}^{i},\ldots,%
{\textstyle\sum\nolimits_{i=0}^{\infty}}
a_{i,d}t_{d}^{i}\right)  \in\widehat{\mathbb{O}}_{P}\mid\Delta=0\right\}
\label{delta}%
\end{equation}
where $\Delta=0$ denotes a homogeneous system of linear equations involving
only a finite number of the $a_{i,j}$. Indeed,
\[
c_{m}=1+\max\left\{  i\mid a_{i,m}\text{ appears in }\Delta=0\right\}  ,
\]
for $m=1,\ldots,d$ (see examples in Section \ref{ejemplos}). Note that, as a
consequence of the definition of $\underline{\varphi}$,\ the relations
$a_{0,1}=a_{0,2}=\ldots=a_{0,d}$ hold.

\begin{remark}
[Conventions and Notation](1) From now on we will use `$X$ is an algebraic
curve over $k$', to mean that $X$ is a complete, geometrically irreducible,
algebraic curve over $k$.

\noindent(2) To simplify the notation, we drop the index $P$, and denote
$\widehat{O}_{P}$ by $\mathcal{O}$, $\widehat{F}_{P}$ by $\mathcal{F}$ and
$\widehat{\mathbb{O}}_{P}$ by $\widetilde{\mathcal{O}}=k [\![ t_{1} ]\!]
\times\ldots k [\![ t_{d} ]\!] $, and $\mathcal{O}$ is a $k$-vector space of
finite codimension in $\widetilde{\mathcal{O}}$ with presentation
(\ref{delta}). We also drop the index $P$ from $\underline{c}_{P}$ and
$\delta_{P}$.
\end{remark}

\begin{remark}
\label{rem-1} Let $\left(  X,0\right)  \subset\left(  \mathbb{C}^{2},0\right)
$ be a germ of reduced plane curve given by $f=0$ for $f\in\mathcal{O}%
_{\left(  \mathbb{C}^{2},0\right)  }$, and let $X=%
{\textstyle\bigcup\nolimits_{i=1}^{d}}
X_{i}$ with $d\geq1$ be its decomposition into irreducible components (or
branches) corresponding to $f=%
{\textstyle\prod\nolimits_{i=1}^{d}}
f_{i}$. Let $\mathcal{O}:=\mathcal{O}_{\left(  X,0\right)  }=\mathcal{O}%
_{\left(  \mathbb{C}^{2},0\right)  }/\left(  f\right)  $ be the ring of germs
of analytic functions on $X$. \ Let $\varphi_{i}:\left(  \mathbb{C},0\right)
\rightarrow\left(  \mathbb{C}^{2},0\right)  $ be a \ parametrization of
$X_{i}$, i.e., $\varphi_{i}$\ is an isomorphism between $X_{i}$ and
$\mathbb{C}$ outside of the origin, for $i=1,\ldots,d$. Let $S(\mathcal{O}%
):=S(f)$ denote the semigroup of $\mathcal{O}$ defined by using the
parametrizations $\varphi_{i}$'s. (For further details, see e.g. \cite{Del}).
\end{remark}

\section{\label{Euler}Integration with Respect to the Generalized Euler
Characteristic}

We denote by $Var_{k}$ the category of $k$-algebraic varieties, and by
$K_{0}\left(  Var_{k}\right)  $\ the corresponding Grothendieck ring. It is
the ring generated by symbols $\left[  V\right]  $, for $V$ an algebraic
variety, with the relations $\left[  V\right]  =\left[  W\right]  $ if $V$ is
isomorphic to $W$, $\left[  V\right]  =\left[  V\setminus Z\right]  +\left[
Z\right]  $ if $Z$ is closed in $V$, and $\left[  V\times W\right]  =\left[
V\right]  \left[  W\right]  $. We denote $\boldsymbol{1}:=\left[
\text{point}\right]  $, $\mathbb{L}:=\left[  \mathbb{A}_{k}^{1}\right]  $ and
$\mathcal{M}_{k}:=K_{0}\left(  Var_{k}\right)  $\ $\left[  \mathbb{L}%
^{-1}\right]  $ the ring obtained by localization with respect to the
multiplicative set generated by $\mathbb{L}$.

We define \textit{the set of }$\underline{n}$\textit{-jets} $J_{\widetilde
{\mathcal{O}}}^{\underline{n}}$ \textit{of the local ring }$\widetilde
{\mathcal{O}}$\textit{ }as $J_{\widetilde{\mathcal{O}}}^{\underline{n}%
}=\widetilde{\mathcal{O}}/t^{\underline{n}+\underline{1}}\widetilde
{\mathcal{O}}\cong k^{\left\Vert \underline{n}+\underline{1}\right\Vert }$.
The canonical projection $\widetilde{\mathcal{O}}$ $\rightarrow$
$\widetilde{\mathcal{O}}/\underline{t}^{\underline{n}+\underline{1}}%
\widetilde{\mathcal{O}}$ is denoted by $\pi_{\underline{n}}$.

\begin{definition}
\label{def1}A subset $X\subseteq\widetilde{\mathcal{O}}=k [\![ t_{1} ]\!]
\times\ldots\times k [\![ t_{d} ]\!]$ is said to be cylindric if
$X=\pi_{\underline{n}}^{-1}\left(  Y\right)  $ for a constructible subset $Y$
of $J_{\widetilde{\mathcal{O}}}^{\underline{n}}$.
\end{definition}

We note that $\mathcal{O}$ and $\mathcal{O}^{\times}$ (the group of units of
$\mathcal{O}$)\ are cylindric subsets of $\widetilde{\mathcal{O}}$ (cf.
(\ref{delta})).

\begin{remark}
\label{rem0} Any constructible subset $Y$ of $J_{\widetilde{\mathcal{O}}%
}^{\underline{n}}$ is defined by a condition that can be expressed as a finite
Boolean combination of conditions of the form%
\[
\left\{
\begin{array}
[c]{ll}%
p_{i}\left(  x_{0},\ldots,x_{m-1}\right)  =0, & i\in I;\\
& \\
q\left(  x_{0},\ldots,x_{m-1}\right)  \neq0, &
\end{array}
\right.
\]
where $m=\left\Vert \underline{n}+\underline{1}\right\Vert $, the
$p_{i}\left(  x_{0},\ldots,x_{m-1}\right)  $, $q\left(  x_{0},\ldots
,x_{m-1}\right)  $ are polynomials in $k\left[  x_{0},\ldots,x_{m-1}\right]
$, and $I$ is a finite subset independent of $m$. We call such a condition
\textit{constructible in }$J_{\widetilde{\mathcal{O}}}^{\underline{n}}$.
Definition \ref{def1} means that the condition for a function $\underline
{z}\in\widetilde{\mathcal{O}}$ to belong to the set $X$ is a constructible
condition on the $\underline{n}$\textit{-jet }$\pi_{\underline{n}}\left(
\underline{z}\right)  $\textit{\ of }$\underline{z}$.
\end{remark}

We present now the notion of integral with respect to the generalized Euler
characteristic introduced by Campillo, Delgado and Gusein-Zade in \cite{CDG4}
for the complex case (and in \cite{DM} for more general contexts).

\begin{definition}
\label{def2} The generalized Euler characteristic (or motivic measure) of a
cylindric subset $X\subseteq\widetilde{\mathcal{O}}$, $X=\pi_{\underline{n}%
}^{-1}\left(  Y\right)  $, with $Y\subseteq J_{\widetilde{\mathcal{O}}%
}^{\underline{n}}$ constructible, is $\chi_{g}\left(  X\right)  :=\left[
Y\right]  \mathbb{L}^{-\left\Vert \underline{n}+\underline{1}\right\Vert }%
\in\mathcal{M}_{k}$.
\end{definition}

The generalized Euler characteristic $\chi_{g}\left(  X\right)  $ is well
defined since, if $X=\pi_{\underline{m}}^{-1}\left(  Y^{\prime}\right)  $,
$Y^{\prime}\subseteq J_{\widetilde{\widehat{\mathcal{O}}}}^{\underline{m}}$,
$\underline{n}\geqslant\underline{m}$, then $Y$ is a locally trivial fibration
over $Y^{\prime}$ with fiber $k^{r}$, where $r=\left\Vert \underline
{n}+\underline{1}\right\Vert -\left\Vert \underline{m}+\underline
{1}\right\Vert $.

\begin{definition}
\label{def3}Let $\left(  G,+,0\right)  $ be an Abelian group, and $X$ a
cylindric subset of $\widetilde{\mathcal{O}}$. A function $\phi:\widetilde
{\mathcal{O}}\rightarrow G$ is called \textit{cylindric} if it has countably
many values and, for each $a\in G$, $a\neq0$, the set $\phi^{-1}\left(
a\right)  $ is cylindric. As in \ \cite{DL4}, \cite{CDG4} we define
\[%
{\textstyle\int\limits_{X}}
\phi d\chi_{g}=%
{\textstyle\sum\limits_{\substack{a\in G\\a\neq0}}}
\chi_{g}\left(  X\cap\phi^{-1}\left(  a\right)  \right)  \otimes a,
\]
if the sum has sense in $G\otimes_{\mathbb{Z}}\mathcal{M}_{k}$. In such a case
the function $\phi$\textit{\ is said to be integrable over }$X$.
\end{definition}

Now we give the projective versions of the above definitions which we will use
later on. For a $k$-vector space $L$ (finite or infinite dimensional), let
$\mathbb{P}L=\left(  L\setminus\left\{  0\right\}  \right)  /k^{\times}$ be
its projectivization, let $\mathbb{P}^{\times}L$ be the disjoint union of
$\mathbb{P}L$ with a point ($\mathbb{P}^{\times}L$ can be identified with
$L/k^{\times}$). The natural map $\mathbb{P}\widetilde{\mathcal{O}}%
\rightarrow\mathbb{P}^{\times}J_{\widetilde{\mathcal{O}}}^{\underline{n}}$ is
also denoted by $\pi_{\underline{n}}$.

\begin{definition}
\label{def4}A subset $X\subseteq\mathbb{P}\widetilde{\mathcal{O}}$ is said to
be cylindric if $X=\pi_{\underline{n}}^{-1}\left(  Y\right)  $ for a
constructible subset $Y$ of $\mathbb{P}J_{\widetilde{\mathcal{O}}}%
^{\underline{n}}\subset\mathbb{P}^{\times}J_{\widetilde{\mathcal{O}}%
}^{\underline{n}}$. The generalized Euler characteristic $\chi_{g}\left(
X\right)  $ of $X$ is $\chi_{g}\left(  X\right)  :=\left[  Y\right]
\mathbb{L}^{-\left\Vert \underline{n}+\underline{1}\right\Vert }\in
\mathcal{M}_{k}$.
\end{definition}

A function $\phi:\mathbb{P}\widetilde{\mathcal{O}}\rightarrow G$ is called
\textit{cylindric }if it satisfies the conditions in Defi\-nition \ref{def3}.
The notion of integration over a cylindric subset of $\mathbb{P}%
\widetilde{\mathcal{O}}$ with respect $d\chi_{g}$ follows the pattern of
Definition \ref{def3}.

\begin{remark}
\label{rem1}Let $V$ be a cylindric subset and a $k$-vector subspace of
$\widetilde{\mathcal{O}}$. Let $\pi$ be the factorization map $\widetilde
{\mathcal{O}}\setminus\left\{  0\right\}  \rightarrow\mathbb{P}\widetilde
{\mathcal{O}}$, $\Omega:\mathbb{P}\widetilde{\mathcal{O}}\rightarrow G$ a
cylindric function integrable over $\mathbb{P}V$, and define $\overline
{\Omega}:=\Omega\circ\pi:\widetilde{\mathcal{O}}\setminus\left\{  0\right\}
\rightarrow G$. Then $\overline{\Omega}$ is cylindric function integrable over
$V$ and
\begin{equation}%
{\textstyle\int\limits_{V}}
\overline{\Omega}d\chi_{g}=\left(  \mathbb{L}-1\right)
{\textstyle\int\limits_{\mathbb{P}V}}
\Omega d\chi_{g}. \label{G5}%
\end{equation}
The identity follows from the fact that
\[
\chi_{g}\left(  \overline{\Omega}^{-1}\left(  a\right)  \cap V\right)
=\left(  \mathbb{L}-1\right)  \chi_{g}\left(  \Omega^{-1}\left(  a\right)
\cap\mathbb{P}V\right)  \text{, for }a\in G,a\neq0\text{.}%
\]

\end{remark}

\section{\label{groupJ}The Structure of the Algebraic Group $\mathcal{J}$}

In this section $k$ is a field of characteristic zero. The quotient group
$\widetilde{\mathcal{O}}^{\times}/\left(  \underline{1}+\mathcal{F}\right)  $
admits a polynomial system of representatives $\left(  g_{1},\ldots
,g_{i},\ldots,g_{d}\right)  $, where \ $g_{i}=%
{\textstyle\sum\nolimits_{j=0}^{c_{i}-1}}
a_{j,i}t_{i}^{j}$, with $a_{0,i}\in k^{\times}$ and $\underline{c}=\left(
c_{1},\ldots,c_{d}\right)  $ is the conductor of $S$. Thus $\widetilde
{\mathcal{O}}^{\times}/\left(  \underline{1}+\mathcal{F}\right)  $ can be
considered as an open subset of the affine space of dimension $\left\Vert
\underline{c}\right\Vert $, this algebraic structure is compatible with the
group structure of $\widetilde{\mathcal{O}}^{\times}/\left( \underline
{1}+\mathcal{F} \right) $ (cf. \cite[Chapter V, Section 14]{Ser}).
Furthermore,
\[
\widetilde{\mathcal{O}}^{\times}/\left(  \underline{1}+\mathcal{F}\right)
\cong\left(  G_{m}\right)  ^{d}\times\left(  G_{a}\right)  ^{\left\Vert
\underline{c}\right\Vert -d}\text{,}%
\]
as algebraic groups, where $G_{m}=\left(  k^{\times},\cdot\right)  $,
$G_{a}=\left(  k,+\right)  $, (cf. \cite[Chapter V, Section 14]{Ser}). By the
previous discussion, the group $\mathcal{O}^{\times}/\left(  \underline
{1}+\mathcal{F}\right)  $ is an algebraic subgroup of $\widetilde{\mathcal{O}%
}^{\times}/\left(  \underline{1}+\mathcal{F}\right)  $.

We note that every equivalence class in $\pi_{\underline{c}-\underline{1}%
}\left(  \mathcal{O}^{\times}\right)  $ has a polynomial representative, and
then $\pi_{\underline{c}-\underline{1}}\left(  \mathcal{O}^{\times}\right)  $
can be considered an open subset of an affine space, and the multiplication in
$\mathcal{O}^{\times}$ induces a structure of algebraic group in
$\pi_{\underline{c}-\underline{1}}\left(  \mathcal{O}^{\times}\right)  $. In
addition, $\pi_{\underline{c}-\underline{1}}\left(  \widetilde{\mathcal{O}%
}^{\times}\right)  \cong\widetilde{\mathcal{O}}^{\times}/\left(  \underline
{1}+\mathcal{F}\right)  $, as algebraic groups.

We set $\mathcal{J}:=\widetilde{\mathcal{O}}^{\times}/\mathcal{O}^{\times}$.
Every equivalence class has a polynomial representative \ that can be
identified with an element of $J_{\widetilde{\mathcal{O}}}^{\underline
{c}-\underline{1}}$. Each equivalence class depends on $\delta$ coefficients
$a_{i,j}$, see (\ref{delta}), \ $d-1$ of them run over $k^{\times}$ and the
others over $k$. This set of polynomial representatives with the operation
induced by the multiplication in $\widetilde{\mathcal{O}}^{\times}$ is \ a
$k$-algebraic group of dimension $\delta$, more precisely, $\mathcal{J}%
\cong\left(  G_{m}\right)  ^{d-1}\times\left(  G_{a}\right)  ^{\delta-d+1}$
(see \cite[Theorem 11 and its Corollary]{Ros2}, or \cite[Chapter V, Section
17]{Ser}). The group $\mathcal{J}$ appears in the construction of the
generalized Jacobian of a singular curve.

\begin{lemma}
\label{lema2} With the above notation the following identities hold:

\noindent(1) $\left[  \mathcal{J}\right]  =\left(  \mathbb{L}-1\right)
^{d-1}\mathbb{L}^{\delta-d+1}$;

\noindent(2) $\left[  \pi_{\underline{c}-\underline{1}}\left(  \mathcal{O}%
^{\times}\right)  \right]  $ $=\left(  \mathbb{L}-1\right)  \mathbb{L}%
^{\left\Vert \underline{c}\right\Vert -\delta-1}$;

\noindent(3) $\chi_{g}\left(  \mathcal{O}^{\times}\right)  =\left(
\mathbb{L}-1\right)  \mathbb{L}^{-\delta-1}$;

\noindent(4) $\chi_{g}\left(  \mathcal{O}\right)  =\mathbb{L}^{-\delta}$.
\end{lemma}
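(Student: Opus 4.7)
The plan is to combine the structure theorem for $\mathcal{J}$ recalled just before the statement with the polynomial presentation (\ref{delta}) of $\mathcal{O}$, and then pass to the generalized Euler characteristic via Definition \ref{def2}. Identity (1) is the only one resting on the group structure; the remaining three are essentially class counts in a single jet space.

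For (1), I would simply read off $\left[\mathcal{J}\right]$ from the algebraic isomorphism $\mathcal{J}\cong (G_{m})^{d-1}\times (G_{a})^{\delta-d+1}$ displayed just before the lemma. Using $\left[G_{m}\right]=\mathbb{L}-1$, $\left[G_{a}\right]=\mathbb{L}$, and multiplicativity of the class on products, the formula follows at once.

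For (2), I would compute the class of $\pi_{\underline{c}-\underline{1}}(\mathcal{O}^{\times})$ directly, rather than through a quotient. From (\ref{delta}) the image $\pi_{\underline{c}-\underline{1}}(\mathcal{O})$ is the linear subspace of $J_{\widetilde{\mathcal{O}}}^{\underline{c}-\underline{1}}\cong\mathbb{A}_{k}^{\left\Vert\underline{c}\right\Vert}$ cut out by the homogeneous linear system $\Delta=0$; its codimension equals $\delta$ by the very definition of the singularity degree, so its class is $\mathbb{L}^{\left\Vert\underline{c}\right\Vert-\delta}$. An element of $\mathcal{O}$ is a unit precisely when its common constant term $a_{0}:=a_{0,1}=\cdots=a_{0,d}$ is nonzero (a single open condition, by the remark after (\ref{delta})); intersecting with this open locus removes one hyperplane and contributes the factor $(\mathbb{L}-1)/\mathbb{L}$, which yields (2).

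For (3) and (4), I would first check that both $\mathcal{O}$ and $\mathcal{O}^{\times}$ are cylindric at the level $\underline{n}=\underline{c}-\underline{1}$ in the sense of Definition \ref{def1}. This holds because the defining conditions $\Delta=0$ and $a_{0}\neq 0$ involve only the coefficients $a_{i,m}$ with $i\leq c_{m}-1$, while $\mathcal{F}=\underline{t}^{\underline{c}}\widetilde{\mathcal{O}}\subseteq\mathcal{O}$ implies that modifying higher-order coefficients keeps one inside $\mathcal{O}$, respectively inside $\mathcal{O}^{\times}$. Applying Definition \ref{def2} with $\left\Vert\underline{n}+\underline{1}\right\Vert=\left\Vert\underline{c}\right\Vert$ then converts (2) and the codimension count for $\mathcal{O}$ into (3) and (4), respectively.

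There is no serious obstacle; the only bookkeeping is pinpointing the jet level at which $\mathcal{O}$ and $\mathcal{O}^{\times}$ become cylindric, and recognizing that the $d$ branches share a single constant term so that the unit condition is one inequation rather than $d$.
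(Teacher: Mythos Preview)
Your argument is correct. Parts (1), (3) and (4) match the paper's proof essentially verbatim; the only genuine difference is in (2). There the paper invokes the short exact sequence of algebraic groups
\[
1\rightarrow\mathcal{O}^{\times}/(\underline{1}+\mathcal{F})\rightarrow\widetilde{\mathcal{O}}^{\times}/(\underline{1}+\mathcal{F})\rightarrow\mathcal{J}\rightarrow 1
\]
and writes $\left[\pi_{\underline{c}-\underline{1}}(\mathcal{O}^{\times})\right]=\left[\widetilde{\mathcal{O}}^{\times}/(\underline{1}+\mathcal{F})\right]\cdot\left[\mathcal{J}\right]^{-1}$, so that (2) is deduced from (1). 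Your route is more elementary: you read off $\left[\pi_{\underline{c}-\underline{1}}(\mathcal{O})\right]=\mathbb{L}^{\left\Vert\underline{c}\right\Vert-\delta}$ directly from the linear presentation (\ref{delta}) and then impose the single unit condition $a_{0}\neq 0$. This avoids any appeal to the group structure of $\mathcal{J}$ or to (1), at the modest cost of not making the relation between $\mathcal{O}^{\times}$ and $\mathcal{J}$ visible. Incidentally, your computation of $\left[\pi_{\underline{c}-\underline{1}}(\mathcal{O})\right]$ in the course of (2) is exactly what the paper rederives for (4) via the exact sequence $0\to\mathcal{O}/\mathcal{F}\to\widetilde{\mathcal{O}}/\mathcal{F}\to\widetilde{\mathcal{O}}/\mathcal{O}\to 0$, so your ordering also streamlines (4).
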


\begin{proof}
(1) The identity follows from the fact that $\mathcal{J}\cong\left(
k^{\times}\right)  ^{d-1}\times k^{\delta-d+1}$ as algebraic variety, (cf.
\cite[Chapter V, Section 17]{Ser}). (2) From the sequence of algebraic
groups,
\begin{equation}
1\rightarrow\mathcal{O}^{\times}/\left(  \underline{1}+\mathcal{F}\right)
\rightarrow\widetilde{\mathcal{O}}^{\times}/\left(  \underline{1}%
+\mathcal{F}\right)  \rightarrow\mathcal{J}\rightarrow1\text{,} \label{seq1}%
\end{equation}
we have $\left[  \pi_{\underline{c}-\underline{1}}\left(  \mathcal{O}^{\times
}\right)  \right]  $ $=\left[  \mathcal{O}^{\times}/\left(  \underline
{1}+\mathcal{F}\right)  \right]  =\left[  \mathcal{J}\right]  ^{-1}\left[
\widetilde{\mathcal{O}}^{\times}/\left(  \underline{1}+\mathcal{F}\right)
\right]  $. Now, the result follows from (1), since $\left[  \widetilde
{\mathcal{O}}^{\times}/\left(  \underline{1}+\mathcal{F}\right)  \right]
=\left(  \mathbb{L}-1\right)  ^{d}\mathbb{L}^{\left\Vert \underline
{c}\right\Vert -d}$. (3) The third identity follows from (2) by using
$\chi_{g}\left(  \mathcal{O}^{\times}\right)  =\left[  \pi_{\underline
{c}-\underline{1}}\left(  \mathcal{O}^{\times}\right)  \right]  \mathbb{L}%
^{-\left\Vert \underline{c}\right\Vert }$. (4) To prove the last identity we
note that the following exact sequence of (finite dimensional) vector spaces%
\[
0\rightarrow\mathcal{O}/\mathcal{F}\rightarrow\widetilde{\mathcal{O}%
}/\mathcal{F}\rightarrow\widetilde{\mathcal{O}}/\mathcal{O}\rightarrow0
\]
implies that $\left[  \mathcal{O}/\mathcal{F}\right]  =\left[  \widetilde
{\mathcal{O}}/\mathcal{O}\right]  ^{-1}\left[  \widetilde{\mathcal{O}%
}/\mathcal{F}\right]  =\mathbb{L}^{\left\Vert \underline{c}\right\Vert
-\delta}$. Therefore
\[
\chi_{g}\left(  \mathcal{O}\right)  =\left[  \pi_{\underline{c}-\underline{1}%
}\left(  \mathcal{O}\right)  \right]  \mathbb{L}^{-\left\Vert \underline
{c}\right\Vert }=\left[  \mathcal{O}/\mathcal{F}\right]  \mathbb{L}%
^{-\left\Vert \underline{c}\right\Vert }=\mathbb{L}^{-\delta}.
\]

\end{proof}

\section{Zeta Functions for Curve Singularities}

\label{section:zetafunction}

In this section $k$ is a field of characteristic $p\geq0$. \ For
$\underline{n}\in S$ we set
\[
\mathcal{I}_{\underline{n}}:=\left\{  I\subseteq\mathcal{O}\mid I=\underline
{z}\mathcal{O}\text{, with }\underline{v}(\underline{z})=\underline
{n}\right\}  ,
\]
and for $m\in\mathbb{N}$,%
\[
\mathcal{I}_{m}:=%
{\textstyle\bigcup\nolimits_{\substack{\underline{n}\in S\\\left\Vert
\underline{n}\right\Vert =m}}}
\mathcal{I}_{\underline{n}}.
\]

\begin{lemma}
\label{lema3}For any $\underline{n}\in S$, there exists a bijection
$\sigma_{\underline{n}}$ between $\mathcal{I}_{\underline{n}}$ and an
algebraic subset $\sigma_{\underline{n}}\left(  \mathcal{I}_{\underline{n}%
}\right)  $ of $\mathcal{J}$, when $\mathcal{J}$\ is considered as an
algebraic variety. Furthermore, if $\underline{n}\boldsymbol{\geq}%
\underline{c}$, then $\sigma_{\underline{n}}\left(  \mathcal{I}_{\underline
{n}}\right)  =\mathcal{J}$.
\end{lemma}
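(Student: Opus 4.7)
The plan is to send each principal ideal $I=\underline{z}\mathcal{O}\in\mathcal{I}_{\underline{n}}$ to the class of its ``unit part.'' Any generator $\underline{z}\in\mathcal{O}$ with $\underline{v}(\underline{z})=\underline{n}$ can be written uniquely, using the factorization $\underline{z}=\underline{t}^{\underline{n}}\underline{\mu}$ introduced in Section~\ref{semigroup}, with $\underline{\mu}\in\widetilde{\mathcal{O}}^{\times}$. I would then define
\[
\sigma_{\underline{n}}(I):=\big[\underline{\mu}\big]\in\widetilde{\mathcal{O}}^{\times}/\mathcal{O}^{\times}=\mathcal{J}.
\]
To check this is well-defined, I note that any two generators $\underline{z}_{1},\underline{z}_{2}$ of $I$ differ by a unit $u\in\mathcal{O}^{\times}$; cancelling $\underline{t}^{\underline{n}}$ in $\underline{t}^{\underline{n}}\underline{\mu}_{1}=u\underline{t}^{\underline{n}}\underline{\mu}_{2}$ yields $\underline{\mu}_{1}=u\underline{\mu}_{2}$, so $[\underline{\mu}_{1}]=[\underline{\mu}_{2}]$ in $\mathcal{J}$. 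Injectivity is the same argument run in reverse: if $\underline{\mu}_{1}=u\underline{\mu}_{2}$ with $u\in\mathcal{O}^{\times}$, then $\underline{z}_{1}=u\underline{z}_{2}$ and $I_{1}=I_{2}$.

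Next I would identify the image. A class $[\underline{\mu}]\in\mathcal{J}$ lies in $\sigma_{\underline{n}}(\mathcal{I}_{\underline{n}})$ precisely when $\underline{t}^{\underline{n}}\underline{\mu}\in\mathcal{O}$; this condition is invariant under multiplying $\underline{\mu}$ by elements of $\mathcal{O}^{\times}$, and hence descends to a well-defined subset of $\mathcal{J}$. Using the polynomial representatives of $\mathcal{J}$ in $J_{\widetilde{\mathcal{O}}}^{\underline{c}-\underline{1}}$ described in Section~\ref{groupJ}, the requirement that $\underline{t}^{\underline{n}}\underline{\mu}$ belong to $\mathcal{O}$ translates, via the presentation (\ref{delta}), into a system of linear equations on the $\delta$ coefficients parametrizing $\mathcal{J}$. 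Thus $\sigma_{\underline{n}}(\mathcal{I}_{\underline{n}})$ is cut out by finitely many polynomial (in fact linear) conditions and is an algebraic subset of $\mathcal{J}$.

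For the final assertion, suppose $\underline{n}\geq\underline{c}$. Then
\[
\underline{t}^{\underline{n}}\widetilde{\mathcal{O}}=\underline{t}^{\underline{n}-\underline{c}}\,\underline{t}^{\underline{c}}\widetilde{\mathcal{O}}\subseteq\underline{t}^{\underline{c}}\widetilde{\mathcal{O}}=\mathcal{F}\subseteq\mathcal{O},
\]
so for every $\underline{\mu}\in\widetilde{\mathcal{O}}^{\times}$ one automatically has $\underline{t}^{\underline{n}}\underline{\mu}\in\mathcal{O}$. Taking $I:=\underline{t}^{\underline{n}}\underline{\mu}\mathcal{O}$ produces a preimage of $[\underline{\mu}]$, giving $\sigma_{\underline{n}}(\mathcal{I}_{\underline{n}})=\mathcal{J}$.

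The main obstacle is the algebraicity assertion in Step~3: one must be careful that the relation ``$\underline{t}^{\underline{n}}\underline{\mu}\in\mathcal{O}$'' is insensitive to the choice of polynomial representative modulo $\underline{1}+\mathcal{F}$ of a class in $\mathcal{J}$, and that after this check the resulting condition is captured by truncating the linear system $\Delta=0$ to the relevant range of coefficients. Once this bookkeeping is set up, the remaining steps are direct manipulations of the factorization $\underline{z}=\underline{t}^{\underline{n}}\underline{\mu}$ already recorded in Section~\ref{semigroup}.
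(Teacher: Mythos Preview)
Your proposal is correct and follows essentially the same route as the paper: both define $\sigma_{\underline{n}}$ by sending $\underline{t}^{\underline{n}}\underline{\mu}\,\mathcal{O}$ to the class of $\underline{\mu}$ in $\mathcal{J}$, verify well-definedness and injectivity via the ambiguity of generators by $\mathcal{O}^{\times}$, and identify the image as the locus cut out by the algebraic condition ``$\underline{t}^{\underline{n}}\underline{\mu}\in\mathcal{O}$'' coming from the presentation~(\ref{delta}). Your treatment of the case $\underline{n}\geq\underline{c}$ via the containment $\underline{t}^{\underline{n}}\widetilde{\mathcal{O}}\subseteq\mathcal{F}\subseteq\mathcal{O}$ and your explicit flag about checking that the membership condition is insensitive to the choice of representative modulo $\underline{1}+\mathcal{F}$ are, if anything, slightly more careful than the paper's version.
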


\begin{proof}
Let $I=\underline{z}\mathcal{O}$ be a principal ideal $\mathcal{I}%
_{\underline{n}}$, with $\underline{z}=$ $\underline{t}^{\underline{n}%
}\underline{\mu}$, $\underline{t}^{\underline{n}}=\left(  t_{1}^{n_{1}}%
,\ldots,t_{d}^{n_{d}}\right)  $ and $\underline{\mu}=\left(  \mu_{1}%
,\ldots,\mu_{d}\right)  \in\widetilde{\mathcal{O}}^{\times}$. Since
$\underline{\mu}$ is determined up to an element of $\mathcal{O}^{\times}$, we
may assume that $\underline{z}=$ $\underline{t}^{\underline{n}}\underline{\mu
}\underline{w}$, with $\underline{\mu}\in\mathcal{J}$ and $\underline{w}%
\in\mathcal{O}^{\times}$. Here we identify $\mathcal{J}$ with a fixed set of
polynomial representatives, and thus $\underline{\mu\ }$ is one of these
representatives. We define
\[%
\begin{array}
[c]{llll}%
\sigma_{\underline{n}}: & \mathcal{I}_{\underline{n}} & \rightarrow &
\mathcal{J}\\
& \underline{t}^{\underline{n}}\underline{\mu}\mathcal{O} & \rightarrow &
\underline{\mu}.
\end{array}
\]
Then $\sigma_{\underline{n}}$\ is a well-defined one-to-one mapping. We now
show that $\sigma_{\underline{n}}\left(  \mathcal{I}_{\underline{n}}\right)  $
is an algebraic subset of $\mathcal{J}$ whose points parametrize the ideals in
$\mathcal{I}_{\underline{n}}$.\ Let $\underline{\mu}$ be a fixed element in
$\mathcal{J}$, if $\underline{t}^{\underline{n}}\underline{\mu}\in\mathcal{O}%
$, then \ $\underline{t}^{\underline{n}}\underline{\mu}$ is the generator of
an ideal in $\mathcal{I}_{\underline{n}}$. The condition `$\underline
{t}^{\underline{n}}\underline{\mu}\in\mathcal{O}$' is algebraic, see
(\ref{delta}), hence $\sigma_{\underline{n}}\left(  \mathcal{I}_{\underline
{n}}\right)  $ is an algebraic subset of $\mathcal{J}$. Finally, if
$\underline{n}\geq\underline{c}$, the condition $\underline{t}^{\underline{n}%
}\underline{\mu}\in\mathcal{O}$ is always true for any $\underline{\mu}%
\in\mathcal{J}$, and then $\sigma_{\underline{n}}\left(  \mathcal{I}%
_{\underline{n}}\right)  =\mathcal{J}$.
\end{proof}

From now on we will identify $\mathcal{I}_{\underline{n}}$\ with
$\sigma_{\underline{n}}\left(  \mathcal{I}_{\underline{n}}\right)  $.

Since
\[
\mathcal{I}_{m}=\cup_{\left\{  \underline{n}\in S\mid\left\Vert \underline
{n}\right\Vert =m\right\}  }\mathcal{I}_{\underline{n}},
\]
by applying the previous lemma, we have that $\mathcal{I}_{m}$\ is an
algebraic subset of $\mathcal{J}$, for any\ $m\in\mathbb{N}$. By using this
fact, the following two formal series are well-defined.

\begin{definition}
\label{def5}We associate to $\mathcal{O}$ the two following zeta functions:
\begin{equation}
Z\left(  T_{1},\ldots,T_{d},\mathcal{O}\right)  :=%
{\textstyle\sum\nolimits_{\underline{n}\in S}}
\left[  \mathcal{I}_{\underline{n}}\right]  \mathbb{L}^{-\left\Vert
\underline{n}\right\Vert }T^{\underline{n}}\in\mathcal{M}_{k} [\![
T_{1},\ldots,T_{d} ]\!] , \label{zeta1}%
\end{equation}
where $T^{\underline{n}}:=T_{1}^{n_{1}} \cdot\ldots\cdot T_{d}^{n_{d}}$, and
\begin{equation}
Z\left(  T,\mathcal{O}\right)  :=Z\left(  T,\ldots,T,\mathcal{O}\right)  .
\label{zeta2}%
\end{equation}

\end{definition}

\begin{lemma}
\label{lema4}The sets $\left\{  \underline{z}\in\mathcal{O}\mid\underline
{v}\left(  \underline{z}\right)  =\underline{n}\right\}  $, $\underline{n}\in
S$, and $\left\{  \underline{z}\in\mathcal{O}\mid\left\Vert \underline
{v}\left(  \underline{z}\right)  \right\Vert =k\right\}  $, $k\in\mathbb{N}%
$,\ are cylindric subsets \ of $\mathcal{O}$. In addition,
\[
\chi_{g}\left(  \left\{  \underline{z}\in\mathcal{O}\mid\underline{v}\left(
\underline{z}\right)  =\underline{n}\right\}  \right)  =\left[  \mathcal{I}%
_{\underline{n}}\right]  \left[  \pi_{\underline{c}-\underline{1}}\left(
\mathcal{O}^{\times}\right)  \right]  \mathbb{L}^{-\left\Vert \underline
{n}+\underline{c}\right\Vert }\text{.}%
\]

\end{lemma}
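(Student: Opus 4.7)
The lemma has two parts: cylindricity of the two families of sets, and the explicit formula for $\chi_{g}$ of the first family. I plan to address each in turn.

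\emph{Cylindricity.} By (\ref{delta}), the condition $\underline{z}\in\mathcal{O}$ is a finite system of linear equations involving only the coefficients $a_{i,j}$ with $0\le i<c_{j}$, hence constructible on $\pi_{\underline{c}-\underline{1}}(\underline{z})$. The condition $\underline{v}(\underline{z})=\underline{n}$ says, for each $j$, that $a_{i,j}=0$ for $i<n_{j}$ and $a_{n_{j},j}\neq 0$, which is constructible on $\pi_{\underline{n}}(\underline{z})$. Setting $\underline{m}:=\underline{n}+\underline{c}-\underline{1}$, both conditions pull back to a constructible condition on $J_{\widetilde{\mathcal{O}}}^{\underline{m}}$, so $A_{\underline{n}}:=\{\underline{z}\in\mathcal{O}\mid\underline{v}(\underline{z})=\underline{n}\}$ is cylindric. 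The set $\{\underline{z}\in\mathcal{O}\mid\|\underline{v}(\underline{z})\|=k\}=\bigcup_{\underline{n}\in S,\,\|\underline{n}\|=k}A_{\underline{n}}$ is then a finite union of cylindric sets, hence cylindric.

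\emph{Measure formula.} By Lemma \ref{lema3}, every $\underline{z}\in A_{\underline{n}}$ admits a unique factorization $\underline{z}=\underline{t}^{\underline{n}}\underline{\mu}\underline{w}$ with $\underline{\mu}$ the polynomial representative of $\sigma_{\underline{n}}(\underline{z}\mathcal{O})\in\mathcal{I}_{\underline{n}}\subseteq\mathcal{J}$ and $\underline{w}\in\mathcal{O}^{\times}$. At level $\underline{m}=\underline{n}+\underline{c}-\underline{1}$, the jet $\pi_{\underline{m}}(\underline{z})$ depends only on the class $[\underline{\mu}\underline{w}]\in\widetilde{\mathcal{O}}^{\times}/(\underline{1}+\mathcal{F})$, and multiplication by $\underline{t}^{\underline{n}}$ identifies $\widetilde{\mathcal{O}}/\underline{t}^{\underline{c}}\widetilde{\mathcal{O}}$ with its image $\underline{t}^{\underline{n}}\widetilde{\mathcal{O}}/\underline{t}^{\underline{n}+\underline{c}}\widetilde{\mathcal{O}}\subseteq J_{\widetilde{\mathcal{O}}}^{\underline{m}}$ as $k$-varieties. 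Since the polynomial representatives furnish a section of the sequence (\ref{seq1}) at the variety level, the product map $(\underline{\mu},[\underline{w}])\mapsto[\underline{\mu}\underline{w}]$ is an isomorphism of $k$-varieties $\mathcal{J}\times\pi_{\underline{c}-\underline{1}}(\mathcal{O}^{\times})\to\pi_{\underline{c}-\underline{1}}(\widetilde{\mathcal{O}}^{\times})$. Restricting to $\mathcal{I}_{\underline{n}}\subseteq\mathcal{J}$ and applying the shift by $\underline{t}^{\underline{n}}$ yields $\pi_{\underline{m}}(A_{\underline{n}})\cong\mathcal{I}_{\underline{n}}\times\pi_{\underline{c}-\underline{1}}(\mathcal{O}^{\times})$ as $k$-varieties. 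The claim then follows from Definition \ref{def2}:
\[
\chi_{g}(A_{\underline{n}})=[\pi_{\underline{m}}(A_{\underline{n}})]\,\mathbb{L}^{-\|\underline{m}+\underline{1}\|}=[\mathcal{I}_{\underline{n}}]\,[\pi_{\underline{c}-\underline{1}}(\mathcal{O}^{\times})]\,\mathbb{L}^{-\|\underline{n}+\underline{c}\|}.
\]

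\emph{Main obstacle.} The delicate step is upgrading the set-theoretic bijection $A_{\underline{n}}\leftrightarrow\mathcal{I}_{\underline{n}}\times\mathcal{O}^{\times}$ coming from Lemma \ref{lema3} into an identity of classes in $K_{0}(Var_{k})$ at the level of jets. This rests on the sequence (\ref{seq1}) splitting as a bundle of varieties (not merely as a sequence of groups), which is exactly what the polynomial section $\mathcal{J}\hookrightarrow\widetilde{\mathcal{O}}^{\times}/(\underline{1}+\mathcal{F})$ described in Section \ref{groupJ} provides; this is implicitly used already in the computation of $[\pi_{\underline{c}-\underline{1}}(\mathcal{O}^{\times})]$ in Lemma \ref{lema2}(2). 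Once this structural point is in place, the shift by $\underline{t}^{\underline{n}}$ is harmless and the product formula is an accounting exercise, with the second cylindric set requiring no further argument.
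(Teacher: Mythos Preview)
Your proof is correct and follows essentially the same route as the paper's: the key step in both is the factorization $\underline{z}=\underline{t}^{\underline{n}}\underline{\mu}\underline{w}$ from Lemma~\ref{lema3}, which at the jet level $\underline{n}+\underline{c}-\underline{1}$ identifies $\pi_{\underline{m}}(A_{\underline{n}})$ with $\mathcal{I}_{\underline{n}}\times\pi_{\underline{c}-\underline{1}}(\mathcal{O}^{\times})$. The only differences are presentational --- you establish cylindricity directly from the coefficient conditions rather than through the factorization, and you are more explicit than the paper about why the product decomposition holds at the level of varieties (via the polynomial section of (\ref{seq1})), a point the paper takes for granted.
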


\begin{proof}
Every $\underline{x}\in\mathcal{O}$, with $\underline{v}\left(  \underline
{x}\right)  =\underline{n}$, can be expressed as%
\begin{align*}
\underline{x}  &  =\underline{t}^{\underline{n}}\underline{\mu}\underline
{w},\text{ }\underline{\mu}\in\mathcal{J}\text{,\ }\underline{w}\in
\mathcal{O}^{\times}\\
&  =\underline{t}^{\underline{n}}\underline{\mu}\pi_{\underline{c}%
-\underline{1}}\left(  \underline{w}\right)  +\underline{t}^{\underline
{n}+\underline{c}}\underline{y},\text{ }\underline{y}\in\widetilde
{\mathcal{O}}.
\end{align*}
Thus $x$ is determined by its $\underline{n}+\underline{c}$ jet, which in turn
is determined by the condition
\[
\underline{\mu}\pi_{\underline{c}-\underline{1}}\left(  \underline{w}\right)
\in\mathcal{I}_{\underline{n}}\times\pi_{\underline{c}-\underline{1}}\left(
\mathcal{O}^{\times}\right)  ,
\]
which is a constructible one. Therefore $\left\{  \underline{z}\in
\mathcal{O}\mid\underline{v}\left(  \underline{z}\right)  =\underline
{n}\right\}  $, $\underline{n}\in S$, is a constructible set and
\[
\chi_{g}\left(  \left\{  \underline{z}\in\mathcal{O}\mid\underline{v}\left(
\underline{z}\right)  =\underline{n}\right\}  \right)  =\left[  \mathcal{I}%
_{\underline{n}}\times\pi_{\underline{c}-\underline{1}}\left(  \mathcal{O}%
^{\times}\right)  \right]  \mathbb{L}^{-\left\Vert \underline{n}+\underline
{c}\right\Vert }\text{.}%
\]

Finally, $\left\{  \underline{z}\in\mathcal{O}\mid\left\Vert \underline
{v}\left(  \underline{z}\right)  \right\Vert =k\right\}  $, $k\in\mathbb{N}$,
is cylindric, since
\[
\left\{  \underline{z}\in\mathcal{O}\mid\left\Vert \underline{v}\left(
\underline{z}\right)  \right\Vert =k\right\}  =%
{\textstyle\bigcup\nolimits_{\left\{  \underline{n}\in S\mid\text{ }\left\Vert
\underline{n}\right\Vert =k\right\}  }}
\left\{  \underline{z}\in\mathcal{O}\mid\underline{v}\left(  \underline
{z}\right)  =\underline{n}\right\}  .
\]

\end{proof}

\begin{corollary}
\label{cor2}With the above notation the following assertions hold:

\noindent(1) the functions
\[%
\begin{array}
[c]{llll}%
T^{\left\Vert \underline{v}\left(  \cdot\right)  \right\Vert }: & \mathcal{O}
& \rightarrow & \mathbb{Z} [\![ T ]\!]\\
& \underline{z} & \rightarrow & T^{\left\Vert \underline{v}\left(
\underline{z}\right)  \right\Vert },
\end{array}
\]

with $T^{\left\Vert \underline{v}\left(  \underline{z}\right)  \right\Vert
}:=0 $, if $\left\Vert \underline{v}\left(  \underline{z}\right)  \right\Vert
=\infty$, and%
\[%
\begin{array}
[c]{llll}%
T^{\underline{v}\left(  \cdot\right)  }: & \mathcal{O} & \rightarrow &
\mathbb{Z} [\![ T_{1},\ldots,T_{d} ]\!]\\
& \underline{z} & \rightarrow & T^{\underline{v}\left(  \underline{z}\right)
},
\end{array}
\]
with $T^{\underline{v}\left(  \underline{z}\right)  }:=0$, if $\left\Vert
\underline{v}\left(  \underline{z}\right)  \right\Vert =\infty$, are cylindric;

\noindent(2) $\left[  \pi_{\underline{c}-\underline{1}}\left(  \mathcal{O}%
^{\times}\right)  \right]  \mathbb{L}^{-\left\Vert \underline{c}\right\Vert
}Z\left(  T_{1},\ldots,T_{d},\mathcal{O}\right)  =%
{\textstyle\int\nolimits_{\mathcal{O}}}
T^{\underline{v}\left(  \underline{z}\right)  }d\chi_{g}$;

\noindent(3) $\left[  \pi_{\underline{c}-\underline{1}}\left(  \mathcal{O}%
^{\times}\right)  \right]  \mathbb{L}^{-\left\Vert \underline{c}\right\Vert
}Z\left(  T,\mathcal{O}\right)  =%
{\textstyle\int\nolimits_{\mathcal{O}}}
T^{\left\Vert \underline{v}\left(  \underline{z}\right)  \right\Vert }%
d\chi_{g}$.
\end{corollary}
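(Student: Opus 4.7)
The plan is to deduce all three assertions directly from Lemma \ref{lema4} together with the definition of cylindric function and the definition of the integral with respect to $d\chi_g$ given in Definition \ref{def3}.

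For part (1), I would verify the two requirements in Definition \ref{def3}. Both functions take values in a countable set: the image of $T^{\underline{v}(\cdot)}$ is $\{T^{\underline{n}} \mid \underline{n} \in S\} \cup \{0\}$ and the image of $T^{\|\underline{v}(\cdot)\|}$ is $\{T^m \mid m \in \|S\|\} \cup \{0\}$. For a fixed nonzero value $T^{\underline{n}}$, the preimage under $T^{\underline{v}(\cdot)}$ is exactly $\{\underline{z} \in \mathcal{O} \mid \underline{v}(\underline{z}) = \underline{n}\}$, which is cylindric by Lemma \ref{lema4}. Likewise, for a fixed nonzero $T^m$ the preimage under $T^{\|\underline{v}(\cdot)\|}$ is $\{\underline{z} \in \mathcal{O} \mid \|\underline{v}(\underline{z})\| = m\}$, which is cylindric by the second part of Lemma \ref{lema4}.

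For part (2), I would unfold the definition of the integral and substitute the measure formula from Lemma \ref{lema4}:
\[
\int_{\mathcal{O}} T^{\underline{v}(\underline{z})}\, d\chi_{g} = \sum_{\underline{n}\in S}\chi_{g}\!\left(\{\underline{z}\in \mathcal{O}\mid \underline{v}(\underline{z})=\underline{n}\}\right)\, T^{\underline{n}} = \sum_{\underline{n}\in S}\left[\mathcal{I}_{\underline{n}}\right]\left[\pi_{\underline{c}-\underline{1}}(\mathcal{O}^{\times})\right]\mathbb{L}^{-\|\underline{n}+\underline{c}\|}\, T^{\underline{n}}.
\]
Pulling the factor $[\pi_{\underline{c}-\underline{1}}(\mathcal{O}^{\times})]\mathbb{L}^{-\|\underline{c}\|}$ out of the sum and comparing with Definition \ref{def5} yields the claimed equality. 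Part (3) is entirely analogous: decompose each level set $\{\|\underline{v}(\underline{z})\|=m\}$ as the disjoint union of the cylindric sets $\{\underline{v}(\underline{z})=\underline{n}\}$ over $\underline{n}\in S$ with $\|\underline{n}\|=m$, apply Lemma \ref{lema4} term by term, and recognize $\sum_{\underline{n}\in S}[\mathcal{I}_{\underline{n}}]\mathbb{L}^{-\|\underline{n}\|}T^{\|\underline{n}\|} = Z(T,\mathcal{O})$.

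There is no serious obstacle: the corollary is essentially a bookkeeping consequence of Lemma \ref{lema4}. The only point requiring a brief justification is that the sums are well-defined elements of $\mathcal{M}_{k}[\![T_{1},\ldots,T_{d}]\!]$ and $\mathcal{M}_{k}[\![T]\!]$ respectively, which holds because for each multi-index $\underline{n}$ (respectively each integer $m$) only one term (respectively finitely many terms, as $S\cap\{\|\underline{n}\|=m\}$ is finite) contributes to the corresponding coefficient.
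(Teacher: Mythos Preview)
Your proposal is correct and follows precisely the route the paper takes: the paper's proof reads, in its entirety, ``The assertions follow from Definition~\ref{def3} by applying the previous lemma,'' and what you have written is simply an explicit unpacking of that sentence. Your additional remarks on countability of the value set and well-definedness of the sums are accurate and fill in details the paper leaves implicit.
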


\begin{proof}
The assertions follow from \ Definition \ref{def3} by applying the previous lemma.
\end{proof}

Let $J_{\underline{n}}\left(  \mathcal{O}\right)  =\left\{  \underline{z}%
\in\mathcal{O}\mid\underline{v}\left(  \underline{z}\right)  \geqslant
\underline{n}\right\}  $, for $\underline{n}\in\mathbb{N}^{d}$ be an ideal.
Since $J_{\underline{n}}\left(  \mathcal{O}\right)  \subseteq J_{\underline
{n}+\underline{1}}\left(  \mathcal{O}\right)  $, they give a multi-index
filtration of the ring $\mathcal{O}$. Note that the $J_{\underline{n}}\left(
\mathcal{O}\right)  $ are cylindric subsets of $\mathcal{O}$. As in
\cite{CDG4} we introduce the following motivic Poincar\'{e} series.

\begin{definition}
\label{def40} The generalized Poincar\'{e} series of a multi-index filtration
given by the ideals $J_{\underline{n}}\left(  \mathcal{O}\right)  $ is the
integral
\[
P_{g}(T_{1},\ldots,T_{d},\mathcal{O}):=\int_{\mathbb{P}\mathcal{O}%
}T^{\underline{v}(\underline{z})}d\chi_{g}\in\mathcal{M}_{k} [\![ T_{1}%
,\ldots,T_{d} ]\!].
\]

\end{definition}

The generalized Poincar\'e series is related to the zeta function of
Definition \ref{def5} as follows.

\begin{lemma}
\label{rem5} With the above notation:
\[
Z(T_{1},\ldots,T_{d},\mathcal{O})=\mathbb{L}^{\delta+1}P_{g}(T_{1}%
,\ldots,T_{d}).
\]

\end{lemma}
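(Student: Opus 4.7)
The plan is to identify $P_g$ and $Z$ coefficient by coefficient in $\mathcal{M}_k$, rather than trying to manipulate them by dividing by $(\mathbb{L}-1)$ (which is not invertible in $\mathcal{M}_k$). The bridge between the two is Lemma \ref{lema4}, which computes the generalized Euler characteristic of the affine level set $\{\underline{z}\in\mathcal{O}\mid\underline{v}(\underline{z})=\underline{n}\}$, combined with Remark \ref{rem1}, which relates an integral on $\mathcal{O}$ to an integral on $\mathbb{P}\mathcal{O}$.

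First I would observe that, since $\underline{v}(\lambda \underline{z}) = \underline{v}(\underline{z})$ for any $\lambda \in k^{\times}$ and any non-zero divisor $\underline{z}$, the function $\underline{z}\mapsto T^{\underline{v}(\underline{z})}$ descends to a well-defined cylindric function on $\mathbb{P}\mathcal{O}$, so $P_g(T_1,\ldots,T_d,\mathcal{O})$ makes sense and the hypotheses of Remark \ref{rem1} apply with $V=\mathcal{O}$. Since the integrand vanishes at $0$, integrating over $\mathcal{O}$ and over $\mathcal{O}\setminus\{0\}$ gives the same element of $\mathcal{M}_k[\![T_1,\ldots,T_d]\!]$.

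Next I would apply Lemma \ref{lema4} to the level set corresponding to a fixed $\underline{n}\in S$, obtaining
\[
\chi_g\bigl(\{\underline{z}\in\mathcal{O}\mid\underline{v}(\underline{z})=\underline{n}\}\bigr)
=[\mathcal{I}_{\underline{n}}]\,[\pi_{\underline{c}-\underline{1}}(\mathcal{O}^{\times})]\,\mathbb{L}^{-\|\underline{n}+\underline{c}\|},
\]
and then invoke Remark \ref{rem1} to get
\[
\chi_g\bigl(\{\underline{z}\in\mathbb{P}\mathcal{O}\mid\underline{v}(\underline{z})=\underline{n}\}\bigr)
=(\mathbb{L}-1)^{-1}[\mathcal{I}_{\underline{n}}]\,[\pi_{\underline{c}-\underline{1}}(\mathcal{O}^{\times})]\,\mathbb{L}^{-\|\underline{n}+\underline{c}\|}.
\]
The factor $(\mathbb{L}-1)^{-1}$ is harmless once I substitute $[\pi_{\underline{c}-\underline{1}}(\mathcal{O}^{\times})]=(\mathbb{L}-1)\mathbb{L}^{\|\underline{c}\|-\delta-1}$ from Lemma \ref{lema2}(2); the $(\mathbb{L}-1)$'s cancel and what remains lies in $\mathcal{M}_k$. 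The net result simplifies to $[\mathcal{I}_{\underline{n}}]\,\mathbb{L}^{-\|\underline{n}\|-\delta-1}$.

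Finally I would sum over $\underline{n}\in S$, using Definition \ref{def3} to unfold $P_g$ as
\[
P_g(T_1,\ldots,T_d,\mathcal{O})
=\sum_{\underline{n}\in S}\chi_g\bigl(\{\underline{z}\in\mathbb{P}\mathcal{O}\mid\underline{v}(\underline{z})=\underline{n}\}\bigr)\,T^{\underline{n}}
=\mathbb{L}^{-\delta-1}\sum_{\underline{n}\in S}[\mathcal{I}_{\underline{n}}]\mathbb{L}^{-\|\underline{n}\|}T^{\underline{n}},
\]
which is exactly $\mathbb{L}^{-\delta-1}Z(T_1,\ldots,T_d,\mathcal{O})$. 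The only subtle point — and the one I would be most careful about — is the cancellation of $(\mathbb{L}-1)$, since $\mathcal{M}_k$ is not $(\mathbb{L}-1)$-torsion free; but here the cancellation happens before taking any quotient, inside the already computed product, so the identity holds as an equality of elements of $\mathcal{M}_k[\![T_1,\ldots,T_d]\!]$.
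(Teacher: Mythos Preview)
Your proof is correct and follows essentially the same route as the paper's: both combine Lemma~\ref{lema4} (equivalently Corollary~\ref{cor2}(2)), Remark~\ref{rem1}, and Lemma~\ref{lema2}(2). The paper compresses the argument into a single line at the level of integrals,
\[
Z(T_{1},\ldots,T_{d},\mathcal{O})=\frac{1}{(\mathbb{L}-1)\mathbb{L}^{-\delta-1}}\int_{\mathcal{O}}T^{\underline{v}(\underline{z})}d\chi_{g}=\mathbb{L}^{\delta+1}\int_{\mathbb{P}\mathcal{O}}T^{\underline{v}(\underline{z})}d\chi_{g},
\]
whereas you unfold this coefficient by coefficient; the ingredients and logic are identical. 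Your extra care about the $(\mathbb{L}-1)$ cancellation is well placed (the paper glosses over it), though if you want it fully airtight you would compute $\chi_g\bigl(\{\underline{z}\in\mathbb{P}\mathcal{O}\mid\underline{v}(\underline{z})=\underline{n}\}\bigr)$ directly by noting, in the parametrization of Lemma~\ref{lema4}, that the $k^{\times}$-scaling acts freely on the $\pi_{\underline{c}-\underline{1}}(\mathcal{O}^{\times})$ factor, so the projective level set is represented by $\mathcal{I}_{\underline{n}}\times\bigl(\pi_{\underline{c}-\underline{1}}(\mathcal{O}^{\times})/k^{\times}\bigr)$, whose class is $[\mathcal{I}_{\underline{n}}]\mathbb{L}^{\|\underline{c}\|-\delta-1}$ without ever inverting $\mathbb{L}-1$.
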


\begin{proof}
By Corollary \ref{cor2} (2), and Lemma \ref{lema2} (2),%
\[
Z(T_{1},\ldots,T_{d},\mathcal{O})=\frac{1}{\left(  \mathbb{L}-1\right)
\mathbb{L}^{-\delta-1}}\int_{\mathcal{O}}T^{\underline{v}(\underline{z})}%
d\chi_{g}=\mathbb{L}^{\delta+1}\int_{\mathbb{P}\mathcal{O}}T^{\underline
{v}(\underline{z})}d\chi_{g},
\]

(cf. Remark \ref{rem1}).
\end{proof}

We set $l(\underline{n}):=\dim_{k}$ $\mathcal{O}/J_{\underline{n}}\left(
\mathcal{O}\right)  $ and the vector $\underline{e}_{i}\in\mathbb{N}^{d}$,
$i=1,\ldots,d$, to have all entries zero except for the $\mathit{i}%
$\textit{-th} one, which is equal to one. Let $I_{0}:=\{1,2,\ldots,d\}$. For
$I\subseteq I_{0}$, let $\#I$ be the number of elements of $I$. Let
$\underline{1}_{I}$ be the element of $\mathbb{N}^{d}$ whose $i$-th component
is equal to $1$ or $0$ if $i\in I$ or $i\notin I$ respectively. Note that
$\underline{0}=\underline{1}_{\emptyset}$ and $\underline{1}=\underline
{1}_{I_{0}}$.

\begin{remark}
\label{nota1}We recall that
\[
\underline{n}\in S\Longleftrightarrow\dim_{k}\text{ }J_{\underline{n}}\left(
\mathcal{O}\right)  /J_{\underline{n}+\underline{e}_{i}}\left(  \mathcal{O}%
\right)  =1\text{, for any }i=1,\ldots,d,
\]
see e.g. \cite{Del2}. Thus, for $\underline{n}\in S$, and for any fixed
$\underline{e}_{i_{0}}$, we have the following exact sequence of $k$-vector
spaces:%
\[
0\rightarrow k\rightarrow J_{\underline{n}+\underline{e}_{i_{0}}}\left(
\mathcal{O}\right)  \rightarrow J_{\underline{n}}\left(  \mathcal{O}\right)
\rightarrow0,
\]
where $J_{\underline{n}}\left(  \mathcal{O}\right)  /J_{\underline
{n}+\underline{e}_{i_{0}}}\left(  \mathcal{O}\right)  \cong k$. Now, if
$\underline{m}\geq\underline{n}+\underline{e}_{i_{0}}+\underline{1}$, from the
previous exact sequence, one gets%
\[
0\rightarrow k\rightarrow J_{\underline{n}+\underline{e}_{i_{0}}}\left(
\mathcal{O}\right)  /t^{\underline{m}+\underline{1}}\widetilde{\mathcal{O}%
}\rightarrow J_{\underline{n}}\left(  \mathcal{O}\right)  /t^{\underline
{m}+\underline{1}}\widetilde{\mathcal{O}}\rightarrow0,
\]
and hence%
\[
\left[  J_{\underline{n}}\left(  \mathcal{O}\right)  /t^{\underline
{m}+\underline{1}}\widetilde{\mathcal{O}}\right]  =\mathbb{L}\left[
J_{\underline{n}+\underline{e}_{i_{0}}}\left(  \mathcal{O}\right)
/t^{\underline{m}+\underline{1}}\widetilde{\mathcal{O}}\right]  .
\]

\end{remark}

\begin{proposition}
\label{lema8}$\left[  \mathcal{I}_{\underline{n}}\right]  =\left(
\mathbb{L}-1\right)  ^{-1}\mathbb{L}^{\left\Vert \underline{n}\right\Vert +1}%
{\textstyle\sum\limits_{I\subseteq I_{0}}}
\left(  -1\right)  ^{\#(I)}\mathbb{L}^{-l\left(  \underline{n}+\underline
{1}_{I}\right)  }$, for $\underline{n}\in S$.
\end{proposition}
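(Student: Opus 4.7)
The plan is to compute the Euler characteristic of the level set $L_{\underline{n}}:=\{\underline{z}\in\mathcal{O}\mid\underline{v}(\underline{z})=\underline{n}\}$ in two different ways and equate them. On one side, Lemma \ref{lema4} combined with Lemma \ref{lema2}(2) already expresses $\chi_g(L_{\underline{n}})$ in terms of $[\mathcal{I}_{\underline{n}}]$, so the whole task reduces to evaluating $\chi_g(L_{\underline{n}})$ intrinsically via the ideals $J_{\underline{n}}(\mathcal{O})$ and then solving for $[\mathcal{I}_{\underline{n}}]$.

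First I would write $L_{\underline{n}}=J_{\underline{n}}(\mathcal{O})\setminus\bigcup_{i\in I_{0}}J_{\underline{n}+\underline{e}_{i}}(\mathcal{O})$, since the condition $\underline{v}(\underline{z})=\underline{n}$ is exactly $\underline{v}(\underline{z})\geq\underline{n}$ together with the negation of $v_{i}(z_{i})\geq n_{i}+1$ for each $i$. Because all $J_{\underline{m}}(\mathcal{O})$ are cylindric and $\chi_{g}$ is additive, inclusion-exclusion over the finite index set $I_{0}=\{1,\ldots,d\}$ gives
\[
\chi_{g}(L_{\underline{n}})=\sum_{I\subseteq I_{0}}(-1)^{\#I}\chi_{g}\bigl(J_{\underline{n}+\underline{1}_{I}}(\mathcal{O})\bigr).
\]

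The key computation is then $\chi_{g}(J_{\underline{m}}(\mathcal{O}))=\mathbb{L}^{-\delta-l(\underline{m})}$ for every $\underline{m}\in\mathbb{N}^{d}$. To get this, pick $\underline{k}\geq\underline{m}+\underline{c}$ so that $\underline{t}^{\underline{k}}\widetilde{\mathcal{O}}\subseteq J_{\underline{m}}(\mathcal{O})$; then $J_{\underline{m}}(\mathcal{O})$ is the preimage under $\pi_{\underline{k}-\underline{1}}$ of its image in the jet space, and Definition \ref{def2} gives $\chi_{g}(J_{\underline{m}}(\mathcal{O}))=[J_{\underline{m}}(\mathcal{O})/\underline{t}^{\underline{k}}\widetilde{\mathcal{O}}]\mathbb{L}^{-\|\underline{k}\|}$. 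From the exact sequences used in the proof of Lemma \ref{lema2}(4) one has $[\mathcal{O}/\mathcal{F}]=\mathbb{L}^{\|\underline{c}\|-\delta}$ and hence $[\mathcal{O}/\underline{t}^{\underline{k}}\widetilde{\mathcal{O}}]=\mathbb{L}^{\|\underline{k}\|-\delta}$; combined with the exact sequence
\[
0\to J_{\underline{m}}(\mathcal{O})/\underline{t}^{\underline{k}}\widetilde{\mathcal{O}}\to\mathcal{O}/\underline{t}^{\underline{k}}\widetilde{\mathcal{O}}\to\mathcal{O}/J_{\underline{m}}(\mathcal{O})\to 0
\]
and $\dim_{k}\mathcal{O}/J_{\underline{m}}(\mathcal{O})=l(\underline{m})$, this yields $[J_{\underline{m}}(\mathcal{O})/\underline{t}^{\underline{k}}\widetilde{\mathcal{O}}]=\mathbb{L}^{\|\underline{k}\|-\delta-l(\underline{m})}$ and the claimed formula for $\chi_{g}(J_{\underline{m}}(\mathcal{O}))$; note the result is independent of the auxiliary $\underline{k}$, as it must be.

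Putting the pieces together, $\chi_{g}(L_{\underline{n}})=\sum_{I\subseteq I_{0}}(-1)^{\#I}\mathbb{L}^{-\delta-l(\underline{n}+\underline{1}_{I})}$. On the other hand, Lemma \ref{lema4} together with $[\pi_{\underline{c}-\underline{1}}(\mathcal{O}^{\times})]=(\mathbb{L}-1)\mathbb{L}^{\|\underline{c}\|-\delta-1}$ from Lemma \ref{lema2}(2) gives $\chi_{g}(L_{\underline{n}})=[\mathcal{I}_{\underline{n}}](\mathbb{L}-1)\mathbb{L}^{-\|\underline{n}\|-\delta-1}$; equating the two expressions and clearing $(\mathbb{L}-1)^{-1}\mathbb{L}^{\|\underline{n}\|+\delta+1}$ produces exactly
\[
[\mathcal{I}_{\underline{n}}]=(\mathbb{L}-1)^{-1}\mathbb{L}^{\|\underline{n}\|+1}\sum_{I\subseteq I_{0}}(-1)^{\#I}\mathbb{L}^{-l(\underline{n}+\underline{1}_{I})}.
\]
The main obstacle I anticipate is the middle step: verifying that the codimension formula $\chi_{g}(J_{\underline{m}}(\mathcal{O}))=\mathbb{L}^{-\delta-l(\underline{m})}$ holds uniformly, including for those $\underline{m}=\underline{n}+\underline{1}_{I}$ that may fail to lie in $S$ (so Remark \ref{nota1} does not directly apply). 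Bypassing the semigroup via the global presentation $[\mathcal{O}/\mathcal{F}]=\mathbb{L}^{\|\underline{c}\|-\delta}$ handles this cleanly, and the hypothesis $\underline{n}\in S$ is only needed at the very end through Lemma \ref{lema4}.
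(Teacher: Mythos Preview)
Your proof is correct and follows the same architecture as the paper: express the level set as $J_{\underline{n}}(\mathcal{O})\setminus\bigcup_i J_{\underline{n}+\underline{e}_i}(\mathcal{O})$, apply inclusion--exclusion, evaluate each $\chi_g(J_{\underline{m}}(\mathcal{O}))$, and compare with Lemma~\ref{lema4} together with the constants from Lemma~\ref{lema2}. The one place you diverge is in establishing $\chi_g(J_{\underline{m}}(\mathcal{O}))=\mathbb{L}^{-\delta-l(\underline{m})}$: the paper obtains this inductively along a chain $\underline{0}=\underline{m}_0\leq\cdots\leq\underline{m}$ using the step formula~(\ref{S2}) (so that its~(\ref{S3}) reads $\chi_g(J_{\underline{m}})=\mathbb{L}^{-l(\underline{m})}\chi_g(\mathcal{O})$), whereas you bypass the chain with a single exact sequence through $\mathcal{O}/\underline{t}^{\underline{k}}\widetilde{\mathcal{O}}$. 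Your route is marginally more direct and, as you point out, makes it transparent that the formula holds for all $\underline{m}=\underline{n}+\underline{1}_I$ regardless of whether they lie in $S$; the paper handles that same concern via the dimension-zero branch of~(\ref{S2}).
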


\begin{proof}
We claim that%
\begin{equation}
\chi_{g}\left(  J_{\underline{n}}\left(  \mathcal{O}\right)  \right)
=\left\{
\begin{array}
[c]{lll}%
\mathbb{L\cdot}\chi_{g}\left(  J_{\underline{n}+\underline{e}_{i_{0}}}\left(
\mathcal{O}\right)  \right)  , & \text{if } & \dim_{k}\left(  J_{\underline
{n}}\left(  \mathcal{O}\right)  /J_{\underline{n}+\underline{e}_{i_{0}}%
}\left(  \mathcal{O}\right)  \right)  =1;\\
&  & \\
\chi_{g}\left(  J_{\underline{n}+\underline{e}_{i_{0}}}\left(  \mathcal{O}%
\right)  \right)  , & \text{if} & \dim_{k}\left(  J_{\underline{n}}\left(
\mathcal{O}\right)  /J_{\underline{n}+\underline{e}_{i_{0}}}\left(
\mathcal{O}\right)  \right)  =0,
\end{array}
\right.  \label{S2}%
\end{equation}
for any $\underline{e}_{i_{0}}$. The formula is clear if $\dim_{k}\left(
J_{\underline{n}}\left(  \mathcal{O}\right)  /J_{\underline{n}+\underline
{e}_{i_{0}}}\left(  \mathcal{O}\right)  \right)  =0$, i.e., if $J_{\underline
{n}}\left(  \mathcal{O}\right)  =J_{\underline{n}+\underline{e}_{i_{0}}%
}\left(  \mathcal{O}\right)  $; thus we can assume that $\dim_{k}\left(
J_{\underline{n}}\left(  \mathcal{O}\right)  /J_{\underline{n}+\underline
{e}_{i_{0}}}\left(  \mathcal{O}\right)  \right)  =1$, i.e. $\underline{n}\in
S$. By taking $\underline{m}$ as in Remark \ref{nota1},\ one gets
\begin{align*}
\chi_{g}\left(  J_{\underline{n}}\left(  \mathcal{O}\right)  \right)   &
=\mathbb{L}^{\mathbb{-}\left\Vert \underline{m}+\underline{1}\right\Vert
}\left[  J_{\underline{n}}\left(  \mathcal{O}\right)  /t^{\underline
{m}+\underline{1}}\widetilde{\mathcal{O}}\right]  =\mathbb{L}\left(
\mathbb{L}^{\mathbb{-}\left\Vert \underline{m}+\underline{1}\right\Vert
}\left[  J_{\underline{n}+\underline{e}_{i_{0}}}\left(  \mathcal{O}\right)
/t^{\underline{m}+\underline{1}}\widetilde{\mathcal{O}}\right]  \right) \\
&  =\mathbb{L} \cdot\chi_{g}\left(  J_{\underline{n}+\underline{e}_{i_{0}}%
}\left(  \mathcal{O}\right)  \right)  .
\end{align*}

Now we fix a sequence of the form%
\[
\underline{0}=\underline{m}_{0}\leqslant\underline{m}_{1}\leqslant
\ldots\leqslant\underline{m}_{j}\leqslant\underline{m}_{j+1}\leqslant
\ldots\leqslant\underline{m}_{k}=\underline{n},
\]
where $\underline{m}_{j+1}=\underline{m}_{j}+\underline{e}_{j_{i}}$, for
$j=0,\ldots,k-1$. Then by applying (\ref{S2}) we have%
\begin{equation}
\chi_{g}\left(  J_{\underline{n}}\left(  \mathcal{O}\right)  \right)
=\mathbb{L}^{-l\left(  \underline{n}\right)  }\cdot\chi_{g}\left(
\mathcal{O}\right)  . \label{S3}%
\end{equation}
On the other hand,%
\begin{align*}
\chi_{g}\left(  \left\{  \underline{z}\in\mathcal{O}\mid\underline{v}\left(
\underline{z}\right)  =\underline{n}\right\}  \right)   &  =\chi_{g}\left(
J_{\underline{n}}\left(  \mathcal{O}\right)  \setminus%
{\textstyle\bigcup\limits_{\boldsymbol{i}=1}^{d}}
J_{\underline{n}+\underline{e}_{i}}\left(  \mathcal{O}\right)  \right) \\
&  =\chi_{g}\left(  J_{\underline{n}}\left(  \mathcal{O}\right)  \right)
-\chi_{g}\left(
{\textstyle\bigcup\limits_{\boldsymbol{i}=1}^{d}}
J_{\underline{n}+\underline{e}_{i}}\left(  \mathcal{O}\right)  \right)  .
\end{align*}
Now by using the identities%
\[
\chi_{g}\left(
{\textstyle\bigcup\limits_{i=1}^{n}}
A_{i}\right)  =%
{\textstyle\sum\limits_{\substack{J\subseteq\left\{  1,2,\ldots,n\right\}
\\J\neq\emptyset}}}
\left(  -1\right)  ^{\#(J)-1}\chi_{g}\bigg(
{\textstyle\bigcap\limits_{j\in J}}
A_{j}\bigg)  ,
\]%
\[
J_{\underline{n}+\underline{e}_{i_{1}}}\left(  \mathcal{O}\right)  \cap
\ldots\cap J_{\underline{n}+\underline{e}_{i_{j}}}\left(  \mathcal{O}\right)
=J_{\underline{n}+\underline{e}_{i_{1}}+\ldots+\underline{e}_{i_{j}}}\left(
\mathcal{O}\right)  ,
\]
\ (\ref{S3}) and Lemma \ref{lema4},\ we obtain
\begin{align*}
\left[  \mathcal{I}_{\underline{n}}\right]   &  =\frac{\mathbb{L}^{\left\Vert
\underline{n}+\underline{c}\right\Vert }}{\left[  \pi_{\underline
{c}-\underline{1}}\left(  \mathcal{O}^{\times}\right)  \right]  }%
\bigg( \chi_{g}\left(  J_{\underline{n}}\left(  \mathcal{O}\right)  \right)  -%
{\textstyle\sum\limits_{\substack{I\subseteq\left\{  1,2,\ldots,d\right\}
\\I\neq\emptyset}}}
\left(  -1\right)  ^{\#(I)-1}\chi_{g}\left(  J_{\underline{n}+\sum_{i\in
I}\underline{e}_{i}}\left(  \mathcal{O}\right)  \right)  \bigg)\\
&  =\frac{\mathbb{L}^{\left\Vert \underline{n}+\underline{c}\right\Vert }%
\chi_{g}\left(  \mathcal{O}\right)  }{\left[  \pi_{\underline{c}-\underline
{1}}\left(  \mathcal{O}^{\times}\right)  \right]  }\bigg( \mathbb{L}%
^{-l\left(  \underline{n}\right)  }-%
{\textstyle\sum\limits_{\substack{I\subseteq\left\{  1,2,\ldots,d\right\}
\\I\neq\emptyset}}}
\left(  -1\right)  ^{\#(I)-1}\mathbb{L}^{-l\left(  \underline{n}+\underline
{1}_{I}\right)  }\bigg) .
\end{align*}

Finally, the result follows from the previous identity by using
\[
\left[  \pi_{\underline{c}-\underline{1}}\left(  \mathcal{O}^{\times}\right)
\right]  =\left(  \mathbb{L}-1\right)  \mathbb{L}^{\left\Vert \underline
{c}\right\Vert -\delta-1}\text{and }\chi_{g}\left(  \mathcal{O}\right)
=\mathbb{L}^{-\delta},
\]
(cf. Lemma \ref{lema2}).
\end{proof}

\begin{remark}
\label{nota2}Let $k$ be a field of characteristic $p>0$. Let $Y$ be an
algebraic curve defined over $k$. Let $\mathcal{O}_{P,Y}$ be the local ring of
$Y$ at the point $P$, and $\widehat{O}_{P,Y}$\ its completion. Then
$\mathcal{J}\cong\left(  G_{m}\right)  ^{d-1}\times\Gamma$, where $\Gamma$ is
a subgroup of a product of groups of Witt vectors of finite length. If $p\geq
c_{i}$, for $i=1,\ldots,d$, where $\underline{c}=\left(  c_{1},\ldots
,c_{d}\right)  $ is the conductor of the semigroup of $\widehat{O}_{P,Y}$,
then $\mathcal{J}\cong\left(  G_{m}\right)  ^{d-1}\times\left(  G_{a}\right)
^{\delta-d+1}$ (cf. \cite[Proposition 9, Chapter V, Sections 16 ]{Ser}).\ We
can attach to $\widehat{O}_{P,Y}$ a zeta function $Z\left(  T_{1},\ldots
,T_{d},\widehat{O}_{P,Y}\right)  $ defined as before. All the results
presented so far are valid in this context, in particular Proposition
\ref{lema8}.
\end{remark}

\section{\label{rationality}Rationality of $Z\left(  T_{1},\ldots
,T_{d},\mathcal{O}\right)  $}

From now on $k$ is a field of characteristic $p\geq0$, and $\mathcal{O}$ is a
totally rational ring as before. The aim of this section is to prove the
rationality of the zeta function $Z(T_{1},\ldots,T_{d},\mathcal{O})$ and,
subsequently, of the generalized Poincar\'{e} series $P_{g}(T_{1},\ldots
,T_{d}, \mathcal{O})$ by Lemma \ref{rem5}, giving also an explicit formula for it.

We start establishing the notation and preliminary results required in the proof.

We set $I_{0}:=\left\{  1,2,\ldots,d\right\}  $ and for a \ subset $J$ of
$I_{0}$,
\[
H_{J}:=\left\{  \underline{n}\in S\mid n_{j}\geqslant c_{j}\Leftrightarrow
j\in J\right\}  ,
\]
where $\underline{c}=\left(  c_{1},\ldots,c_{d}\right)  $ is the conductor of
$S$, and also%
\[
H_{J}\left(  \mathcal{O}\right)  :=\left\{  \underline{z}\in\mathcal{O}%
\mid\underline{v}\left(  \underline{z}\right)  \in H_{J}\right\}  .
\]
Note that $H_{\emptyset}\left(  \mathcal{O}\right)  =\left\{  \underline{z}%
\in\mathcal{O}\mid0\leqslant v\left(  z_{i}\right)  \leqslant c_{i}-1\text{,
}i=1,\ldots,d\right\}  $, and $H_{I_{0}}\left(  \mathcal{O}\right)
=\mathcal{F}$. \ Given $\underline{m}\in\mathbb{N}^{d}$ such that
$\underline{c}>\underline{m}$, i.e., $c_{i}>m_{i}$, for $i=1,\ldots,d$, we
set
\[
H_{J,\underline{m}}:=\left\{  \underline{n}\in S\mid n_{j}\geqslant
c_{j}\text{ if\ }j\in J\text{, and }n_{j}=m_{j}\text{ if }j\notin J\right\}
,
\]
\[
H_{J,\underline{m}}\left(  \mathcal{O}\right)  :=\left\{  \underline{z}%
\in\mathcal{O}\mid\underline{v}\left(  \underline{z}\right)  \in
H_{J,\underline{m}}\right\}  ,
\]
and for a fixed $J$ satisfying $\emptyset\subsetneq J\subsetneq I_{0}$,
\[
B_{J}:=\left\{  \underline{m}\in\mathbb{N}^{\#J}\mid H_{J,\underline{m}}%
\neq\emptyset\right\}  .
\]
Therefore for $\emptyset\subsetneq J\subsetneq I_{0}$, one gets the following
partition for $H_{J}\left(  \mathcal{O}\right)  $:
\begin{equation}
H_{J}\left(  \mathcal{O}\right)  =%
{\textstyle\bigcup\limits_{\underline{m}\in B_{J}}}
H_{J,\underline{m}}\left(  \mathcal{O}\right)  . \label{H_formula}%
\end{equation}

\begin{lemma}
\label{lema6}With the above notation the following assertions hold:

\noindent(1) Let $J=\left\{  1,\ldots,r\right\}  $ with $1\leqslant r<d$ and
\ let $\underline{m}\in\mathbb{N}^{d}$ such that $\underline{c}>\underline{m}%
$. If $H_{J,\underline{m}}\neq\emptyset$, then \
\[
H_{J,\underline{m}}=\left\{  \underline{n}\in\mathbb{N}^{d}\mid%
\begin{array}
[c]{ll}%
n_{i}\geqslant c_{i}\text{,} & \text{for }i=1,\ldots,r\text{, and}\\
n_{i}=m_{i}, & \text{for }i=r+1,\ldots,d
\end{array}
\right\}  ;
\]

\noindent(2) $H_{J,\underline{m}}\left(  \mathcal{O}\right)  $ and
$H_{J}\left(  \mathcal{O}\right)  $ are cylindric subsets of $\mathcal{O}$.
\end{lemma}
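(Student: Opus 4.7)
My plan is to prove (1) by exhibiting a realizer in $\mathcal{O}$ for each candidate $\underline{n}$, using that the conductor ideal $\mathcal{F}$ lies inside $\mathcal{O}$, and to deduce (2) by expressing membership as a Boolean combination of constructible jet conditions, followed by a finite union.

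For (1), one inclusion is immediate from the definition of $H_{J,\underline{m}}$. For the reverse inclusion I will pick a witness $\underline{n}_{0}\in H_{J,\underline{m}}$ (nonempty by hypothesis) together with some $\underline{x}=(x_{1},\ldots,x_{d})\in\mathcal{O}$ realizing $\underline{v}(\underline{x})=\underline{n}_{0}$, and for an arbitrary target $\underline{n}$ with $n_{i}\geq c_{i}$ for $i\leq r$ and $n_{i}=m_{i}$ for $i>r$ define $\underline{y}$ by $y_{i}:=t_{i}^{n_{i}}$ for $i\in J$ and $y_{i}:=x_{i}$ for $i\notin J$. For $i\leq r$ both $y_{i}$ and $x_{i}$ lie in $t_{i}^{c_{i}}k[\![ t_{i} ]\!]$, so their difference does too; for $i>r$ the difference vanishes. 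Hence $\underline{y}-\underline{x}\in\underline{t}^{\underline{c}}\widetilde{\mathcal{O}}=\mathcal{F}\subseteq\mathcal{O}$, which forces $\underline{y}\in\mathcal{O}$, and by construction $\underline{v}(\underline{y})=\underline{n}$, so $\underline{n}\in H_{J,\underline{m}}$. I expect this realization step to be the main obstacle, since everything else in the lemma is formal once the inclusion $\mathcal{F}\subseteq\mathcal{O}$ is brought to bear.

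For (2), I will check that $H_{J,\underline{m}}(\mathcal{O})$ is cut out of $\widetilde{\mathcal{O}}$ by a Boolean combination of constructible conditions on the jet $\pi_{\underline{N}}$ for any $\underline{N}$ with $N_{j}\geq\max(c_{j},m_{j})$: (a) the homogeneous linear system $\Delta=0$ of (\ref{delta}) encoding $\underline{z}\in\mathcal{O}$, (b) for each $j\in J$ the vanishing of the coefficients of $z_{j}$ of orders $0,\ldots,c_{j}-1$, and (c) for each $j\notin J$ the vanishing of the coefficients of $z_{j}$ of orders $0,\ldots,m_{j}-1$ together with non-vanishing of the coefficient at order $m_{j}$. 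Definition \ref{def1} then yields cylindricity. For $H_{J}(\mathcal{O})$, only the coordinates $m_{j}$ with $j\notin J$ enter the definition of $H_{J,\underline{m}}$, and each such $m_{j}$ ranges over the finite set $\{0,\ldots,c_{j}-1\}$, so $B_{J}$ is finite; the partition (\ref{H_formula}) then expresses $H_{J}(\mathcal{O})$ as a finite union of cylindric sets, hence cylindric. The degenerate cases $J=\emptyset$ and $J=I_{0}$ (where $H_{I_{0}}(\mathcal{O})=\mathcal{F}$) will be handled by the same jet argument applied directly.
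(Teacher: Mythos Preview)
Your proposal is correct and follows essentially the same approach as the paper: for (1) both arguments modify a witness element only in the coordinates indexed by $J$ beyond the conductor, and your formulation via $\underline{y}-\underline{x}\in\mathcal{F}\subseteq\mathcal{O}$ is a clean repackaging of the paper's observation that the defining system $\Delta=0$ involves only coefficients below the conductor; for (2) both proofs write out the same constructible jet conditions and reduce $H_{J}(\mathcal{O})$ to a finite union via (\ref{H_formula}).
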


\begin{proof}
(1) Since $H_{J,\underline{m}}\neq\emptyset$, there exist $\underline
{f}(\underline{m}):=\left(  e_{1},\ldots,e_{r},m_{r+1},\ldots,m_{d}\right)
\in H_{J,\underline{m}}$ and $\underline{z}=\left(  z_{1},\ldots,z_{d}\right)
\in\mathcal{O}$ such that
\[
z_{i}=\left\{
\begin{array}
[c]{lll}%
{\textstyle\sum\nolimits_{k=e_{i}}^{\infty}}
a_{k,i}t_{i}^{k},\  & \text{with }a_{e_{i},i}\neq0\text{,} & \text{for
}i=1,\ldots,r;\\
&  & \\%
{\textstyle\sum\nolimits_{k=m_{i}}^{\infty}}
a_{k,i}t_{i}^{k}, & \text{with }a_{m_{i},i}\neq0\text{,} & \text{for
}i=r+1,\ldots,d.
\end{array}
\right.
\]
Since $\mathcal{O}$ is a cylindric subset of $\widetilde{\mathcal{O}}$ defined
by the condition $\Delta=0$ (see (\ref{delta})), that involves only the
variables $a_{k,i}$ with $0\leqslant k<c_{k}$, $k=1,\ldots,d$, it follows that
any $\underline{y}=\left(  y_{1},\ldots,y_{d}\right)  \in\widetilde
{\mathcal{O}}$ of the form%

\[
y_{i}=\left\{
\begin{array}
[c]{lll}%
{\textstyle\sum\nolimits_{k=c_{i}}^{\infty}}
a_{k,i}t_{i}^{k},\  & \text{for }i=1,\ldots,r; & \\
&  & \\%
{\textstyle\sum\nolimits_{k=m_{i}}^{\infty}}
a_{k,i}t_{i}^{k}, & \text{with }a_{m_{i},i}\neq0\text{,} & \text{for
}i=r+1,\ldots,d,
\end{array}
\right.
\]
belongs to $\mathcal{O}$, and therefore
\[
H_{J,\underline{m}}=\left\{  \underline{n}\in\mathbb{N}^{d}\mid%
\begin{array}
[c]{ll}%
n_{i}\geqslant c_{i}\text{,} & \text{for }i=1,\ldots,r\text{, and}\\
n_{i}=m_{i}, & \text{for }i=r+1,\ldots,d.
\end{array}
\right\}  .
\]

(2) Since $H_{J}\left(  \mathcal{O}\right)  $ is a finite disjoint union of
subsets of the form $H_{J,\underline{m}}\left(  \mathcal{O}\right)  $, it is
sufficient to show that $H_{J,\underline{m}}\left(  \mathcal{O}\right)  $ is a
cylindric subset of $\mathcal{O}$. On the other hand, since $H_{J,\underline
{m}}\left(  \mathcal{O}\right)  =\mathcal{O}\cap\left\{  \underline{z}%
\in\widetilde{\mathcal{O}}\mid\underline{v}\left(  \underline{z}\right)  \in
H_{J,\underline{m}}\right\}  $ and $\mathcal{O}$\ is a cylindric subset of
$\widetilde{\mathcal{O}}$, it is enough to show that $\underline{v}\left(
\underline{z}\right)  \in H_{J,\underline{m}}$ is a constructible condition in
$J_{\widetilde{\mathcal{O}}}^{\underline{l}}$, for some $\underline{l}%
\in\mathbb{N}^{d}$. Let $\underline{l}=\left(  c_{1},\ldots,c_{r}%
,m_{r+1}+1,\ldots,m_{d}+1\right)  $, and let \ $\underline{z}=\left(
z_{1},\ldots,z_{d}\right)  \in\widetilde{\mathcal{O}}$ with $z_{i}=%
{\textstyle\sum\nolimits_{k=0}^{\infty}}
a_{k,i}t_{i}^{k}$, \ for $i=1,\ldots,d$. Since%
\[
v_{i}\left(  z_{i}\right)  =m_{i}\Leftrightarrow\left\{
\begin{array}
[c]{ll}%
a_{k,i}=0\text{,} & k=0,\ldots,m_{i}-1;\text{ }\\
& \\
a_{m_{i},i}\neq0\text{,} &
\end{array}
\right.
\]
and \
\[
v_{i}\left(  z_{i}\right)  \geqslant c_{i}\Leftrightarrow\left\{
\begin{array}
[c]{ll}%
a_{k,i}=0\text{,} & k=0,\ldots,c_{i}-1,\text{ }%
\end{array}
\right.
\]
thus $\underline{v}\left(  \underline{z}\right)  \in H_{J,\underline{m}}$ is a
constructible condition in $J_{\widetilde{\mathcal{O}}}^{\underline{l}}$.
\end{proof}

\begin{remark}
\label{rem2}Let $J=\left\{  1,\ldots,r\right\}  $ with $1\leqslant r<d$ and
\ let $\underline{m}\in\mathbb{N}^{d}$ such that $\underline{c}>\underline{m}%
$. If $H_{J,\underline{m}}\neq\emptyset$, then $\left[  \mathcal{I}%
_{\underline{k}}\right]  =\left[  \mathcal{I}_{\underline{f_{J}}(\underline
{m})} \right]  $, with $\underline{f_{J}}(\underline{m})=\left(  c_{1}%
,\ldots,c_{r},m_{r+1},\ldots,m_{d}\right)  $, for any $\underline{k}\in
H_{J,\underline{m}}$.

The remark follows from the following observation. With the notation used in
the proof of Lemma \ref{lema3}, the following conditions are equivalent:%
\begin{align*}
\sigma_{\underline{k}}\left(  I\right)   &  =\underline{\mu},\underline{\text{
}k}\in H_{J,\underline{m}}\text{ }\Leftrightarrow t^{\underline{k}}%
\underline{\mu}\underline{v}\in\mathcal{O}\text{, for any }\underline{v}%
\in\mathcal{O}^{\times}\text{, }\underline{k}\in H_{J,\underline{m}}\\
&  \Leftrightarrow t^{\underline{f_{J}}(\underline{m})}\underline{\mu
}\underline{v}\in\mathcal{O}\text{, for any }\underline{v}\in\mathcal{O}%
^{\times}.
\end{align*}
In the proof of the last equivalence we use the same reasoning as that used in
the proof of Lemma \ref{lema6} (1).
\end{remark}

\begin{lemma}
\label{lema7}Let $J$ be a non-empty and proper subset of $I_{0}$, such that
$H_{J,\underline{m}}\left(  \mathcal{O}\right)  \neq\emptyset$. Then%
\[%
{\textstyle\int\limits_{H_{J,\underline{m}}\left(  \mathcal{O}\right)  }}
T^{\underline{v}\left(  \underline{z}\right)  }d\chi_{g}=\frac{\left[
\mathcal{I}_{\underline{f_{J}}(\underline{m})}\right]  \left[  \pi
_{\underline{c}-\underline{1}}\left(  \mathcal{O}^{\times}\right)  \right]
\mathbb{L}^{-\left\Vert \underline{c}\right\Vert -\left\Vert \underline{f_{J}%
}(\underline{m})\right\Vert }T^{\underline{f_{J}}(\underline{m})}}{%
{\textstyle\prod\limits_{i=1}^{r}}
\left(  1-\mathbb{L}^{-1}T_{i}\right)  }\text{,}%
\]
where $\underline{f_{J}}(\underline{m})=\left(  c_{1},\ldots,c_{r}%
,m_{r+1},\ldots,m_{d}\right)  \in S$, with $m_{i}<c_{i}$, $r+1\leqslant i\leq
d$.
\end{lemma}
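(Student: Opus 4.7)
The plan is to evaluate the integral by unwinding the definition of $\int \, d\chi_g$, breaking $H_{J,\underline{m}}(\mathcal{O})$ into the fibers of $\underline{v}$, and recognizing that on $H_{J,\underline{m}}$ the class $[\mathcal{I}_{\underline{n}}]$ is constant (by Remark \ref{rem2}), so that the remaining sum factors as a product of geometric series in the coordinates indexed by $J$ and a monomial in the coordinates outside $J$.

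First, without loss of generality I would take $J=\{1,\dots,r\}$ with $1\le r<d$, since the labelling of the factors of $\widetilde{\mathcal{O}}$ is arbitrary. By Lemma \ref{lema6}(1),
\[
H_{J,\underline{m}}=\{\underline{n}\in\mathbb{N}^{d}\mid n_{i}\ge c_{i}\text{ for }i\le r,\ n_{i}=m_{i}\text{ for }i>r\},
\]
and by Lemma \ref{lema4} the level sets $\{\underline{z}\in\mathcal{O}\mid \underline{v}(\underline{z})=\underline{n}\}$ are cylindric with
\[
\chi_{g}\bigl(\{\underline{z}\in\mathcal{O}\mid \underline{v}(\underline{z})=\underline{n}\}\bigr)=\left[\mathcal{I}_{\underline{n}}\right]\left[\pi_{\underline{c}-\underline{1}}(\mathcal{O}^{\times})\right]\mathbb{L}^{-\Vert\underline{n}+\underline{c}\Vert}.
\]
Since $H_{J,\underline{m}}(\mathcal{O})$ is the disjoint union of these level sets as $\underline{n}$ runs over $H_{J,\underline{m}}$, Definition \ref{def3} gives
\[
\int_{H_{J,\underline{m}}(\mathcal{O})}T^{\underline{v}(\underline{z})}\,d\chi_{g}=\left[\pi_{\underline{c}-\underline{1}}(\mathcal{O}^{\times})\right]\mathbb{L}^{-\Vert\underline{c}\Vert}\sum_{\underline{n}\in H_{J,\underline{m}}}\left[\mathcal{I}_{\underline{n}}\right]\mathbb{L}^{-\Vert\underline{n}\Vert}T^{\underline{n}}.
\]

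Next, I would invoke Remark \ref{rem2}, which says $[\mathcal{I}_{\underline{n}}]=[\mathcal{I}_{\underline{f_{J}}(\underline{m})}]$ for every $\underline{n}\in H_{J,\underline{m}}$. This is the decisive step: it pulls the (a priori $\underline{n}$-dependent) Grothendieck class out of the sum, leaving a purely combinatorial series to evaluate. The remaining sum then factors as
\[
\sum_{\underline{n}\in H_{J,\underline{m}}}\mathbb{L}^{-\Vert\underline{n}\Vert}T^{\underline{n}}=\prod_{i=r+1}^{d}(\mathbb{L}^{-1}T_{i})^{m_{i}}\cdot\prod_{i=1}^{r}\sum_{n_{i}\ge c_{i}}(\mathbb{L}^{-1}T_{i})^{n_{i}}=\frac{\mathbb{L}^{-\Vert\underline{f_{J}}(\underline{m})\Vert}T^{\underline{f_{J}}(\underline{m})}}{\prod_{i=1}^{r}(1-\mathbb{L}^{-1}T_{i})},
\]
by summing $r$ independent geometric series starting at exponent $c_{i}$ and leaving the fixed factors for $i>r$ as a monomial. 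Combining this with the prefactor yields exactly the claimed formula.

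The only non-routine ingredient is Remark \ref{rem2}; everything else is bookkeeping. There is no genuine obstacle, but one must be careful that the sum genuinely lies in $\mathcal{M}_{k}[\![T_{1},\ldots,T_{d}]\!]$ so that Definition \ref{def3} applies, which is guaranteed because only finitely many $\underline{n}$ contribute to each monomial in $T$ (the exponents $n_{i}$ for $i>r$ are fixed at $m_{i}$, and each $T^{\underline{n}}$ with $n_{i}\ge c_{i}$ appears only once). A small cosmetic point is to verify that $\underline{f_{J}}(\underline{m})\in S$: this follows from the existence of $\underline{z}\in H_{J,\underline{m}}(\mathcal{O})$ supplied by the hypothesis $H_{J,\underline{m}}(\mathcal{O})\neq\emptyset$, together with Lemma \ref{lema6}(1).
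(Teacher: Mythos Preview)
Your proof is correct and follows essentially the same approach as the paper: reduce to $J=\{1,\dots,r\}$, use Lemma~\ref{lema6}(1) to describe $H_{J,\underline{m}}$, apply Lemma~\ref{lema4} to express the integral as a sum over $\underline{n}\in H_{J,\underline{m}}$, invoke Remark~\ref{rem2} to pull out the constant class $[\mathcal{I}_{\underline{f_J}(\underline{m})}]$, and sum the resulting geometric series. The only difference is cosmetic---the paper writes the sum directly with the substitution $\underline{n}=\underline{f_J}(\underline{m})+(\underline{e},0,\dots,0)$ for $\underline{e}\in\mathbb{N}^r$, whereas you first display the general sum and then factor it.
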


\begin{proof}
Without loss of generality we assume that $J=\left\{  1,\ldots,r\right\}  $,
with $1\leqslant r<d$. With this notation, by using $H_{J,\underline{m}%
}\left(  \mathcal{O}\right)  \neq\emptyset$ and Lemma \ref{lema6} (1), \ we have%

\[
H_{J,\underline{m}}=\left\{  \underline{n}\in\mathbb{N}^{d}\mid%
\begin{array}
[c]{ll}%
n_{i}\geqslant c_{i}\text{,} & \text{for }i=1,\ldots,r\text{, and}\\
n_{i}=m_{i}, & \text{for }i=r+1,\ldots,d.
\end{array}
\right\}
\]
Now, by using Lemma \ref{lema4}\ and Remark \ref{rem2} we have%
\begin{align*}%
{\textstyle\int\limits_{H_{J,\underline{m}}\left(  \mathcal{O}\right)  }}
T^{\underline{v}\left(  \underline{z}\right)  }d\chi_{g}  &  =\left[
\mathcal{I}_{\underline{f_{J}}(\underline{m})}\right]  \left[  \pi
_{\underline{c}-\underline{1}}\left(  \mathcal{O}^{\times}\right)  \right]
\mathbb{L}^{-\left\Vert \underline{c}\right\Vert -\left\Vert \underline{f_{J}%
}(\underline{m})\right\Vert }T^{\underline{f_{J}}(\underline{m})}\left(
{\textstyle\sum\limits_{e\in\mathbb{N}^{r}}}
\mathbb{L}^{-\left\Vert \underline{e}\right\Vert }T^{\underline{e}}\right) \\
&  =\frac{\left[  \mathcal{I}_{\underline{f_{J}}(\underline{m})}\right]
\left[  \pi_{\underline{c}-\underline{1}}\left(  \mathcal{O}^{\times}\right)
\right]  \mathbb{L}^{-\left\Vert \underline{c}\right\Vert -\left\Vert
\underline{f_{J}}(\underline{m})\right\Vert }T^{\underline{f_{J}}%
(\underline{m})}}{%
{\textstyle\prod\limits_{i=1}^{r}}
\left(  1-\mathbb{L}^{-1}T_{i}\right)  },
\end{align*}
where $\underline{f_{J}}(\underline{m})=\left(  c_{1},\ldots,c_{r}%
,m_{r+1},\ldots,m_{d}\right)  \in S$, with $m_{i}<c_{i}$, $r+1\leqslant i\leq
d$.
\end{proof}

\begin{theorem}
\label{teo1}Let $k$ be a field of characteristic $p\geq0$, and $\mathcal{O}%
$\ a totally rational ring as before. Then \ (1)%
\[
Z\left(  T_{1},\ldots,T_{d},\mathcal{O}\right)  =%
{\textstyle\sum\limits_{%
\begin{array}
[c]{c}%
\underline{n}\in S\\
\underline{0}\leq\underline{n}<\underline{c}%
\end{array}
}}
\left[  \mathcal{I}_{\underline{n}}\right]  \mathbb{L}^{-\left\Vert
\underline{n}\right\Vert }T^{\underline{n}}%
\]
\begin{align*}
&  +%
{\textstyle\sum\limits_{\emptyset\subsetneq J\subsetneq I_{0}}}
\text{ \ }%
{\textstyle\sum\limits_{\substack{\underline{m}\in B_{J}}}}
\left[  \mathcal{I}_{\underline{f_{J}}(\underline{m})}\right]  \left[
\pi_{\underline{c}-\underline{1}}\left(  \mathcal{O}^{\times}\right)  \right]
\mathbb{L}^{-\left\Vert \underline{c}\right\Vert -\left\Vert \underline{f_{J}%
}(\underline{m})\right\Vert }\frac{T^{\underline{f_{J}}(\underline{m})}}{%
{\textstyle\prod\limits_{i=1}^{r_{J}}}
\left(  1-\mathbb{L}^{-1}T_{i}\right)  }\\
&  +\left[  \mathcal{J}\right]  \mathbb{L}^{-\left\Vert \underline
{c}\right\Vert }\frac{T^{\underline{c}}}{%
{\textstyle\prod\limits_{i=1}^{d}}
\left(  1-\mathbb{L}^{-1}T_{i}\right)  },
\end{align*}
where $\underline{f_{J}}(\underline{m})=\left(  c_{1},\ldots,c_{r_{J}%
},m_{r_{J}+1},\ldots,m_{d}\right)  \in S$, with $m_{i}<c_{i}$, $r_{J}%
+1\leqslant i\leq d$, and $1\leqslant r_{J}<d$.

(2)%
\[
Z\left(  T_{1},\ldots,T_{d},\mathcal{O}\right)  =\frac{M\left(  T_{1}%
,\ldots,T_{d},\mathcal{O}\right)  }{%
{\textstyle\prod\limits_{i=1}^{d}}
\left(  1-\mathbb{L}^{-1}T_{i}\right)  }%
\]
where $M\left(  T_{1},\ldots,T_{d},\mathcal{O}\right)  $\ is a polynomial in
$\mathcal{M}_{k}\left[  T_{1},\ldots,T_{d}\right]  $ of degree at most
$\left\Vert \underline{c}\right\Vert $ that satisfies $M\left(  \mathbb{L}%
,\ldots,\mathbb{L},\mathcal{O}\right)  =\left[  \mathcal{J}\right]  $.
\end{theorem}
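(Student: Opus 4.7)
The plan is to split the series for $Z(T_1,\ldots,T_d,\mathcal{O})$ according to the position of $\underline{n}\in S$ relative to the conductor $\underline{c}$ and then evaluate each piece using the tools from the previous two sections. First I translate the series into the motivic integral
\[
Z(T_1,\ldots,T_d,\mathcal{O}) = \frac{\mathbb{L}^{\|\underline{c}\|}}{[\pi_{\underline{c}-\underline{1}}(\mathcal{O}^{\times})]} \int_{\mathcal{O}} T^{\underline{v}(\underline{z})}\, d\chi_g
\]
of Corollary \ref{cor2}(2), and decompose the non-zero divisors as
\[
\mathcal{O}\setminus\{0\} \;=\; H_{\emptyset}(\mathcal{O}) \;\sqcup\; \bigsqcup_{\emptyset\subsetneq J\subsetneq I_0} H_J(\mathcal{O}) \;\sqcup\; H_{I_0}(\mathcal{O}),
\]
according to the rule $J:=\{i : v_i(z_i)\geq c_i\}$. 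All pieces are cylindric by Lemma \ref{lema6}, hence the integral splits as a sum over $J$.

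Each piece is then evaluated separately. For $J=\emptyset$, the valuation $\underline{v}(\underline{z})$ ranges over $\{\underline{n}\in S : \underline{0}\leq \underline{n}<\underline{c}\}$ and Lemma \ref{lema4} yields the first finite sum. For a proper, non-empty $J$, I use the partition (\ref{H_formula}) together with Lemma \ref{lema7} to compute the contribution; summing the resulting geometric series in the indices $i\in J$ produces the factor $1/\prod_{i\in J}(1-\mathbb{L}^{-1}T_i)$. For $J=I_0$ the integration is over $H_{I_0}(\mathcal{O})=\mathcal{F}$, and Lemma \ref{lema3} gives $[\mathcal{I}_{\underline{n}}]=[\mathcal{J}]$ for every $\underline{n}\geq\underline{c}$; a $d$-fold geometric series produces the last term. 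Recollecting the three contributions and multiplying by $\mathbb{L}^{\|\underline{c}\|}/[\pi_{\underline{c}-\underline{1}}(\mathcal{O}^{\times})]$ proves (1).

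For (2), I clear the denominator of (1) by multiplying through by $\prod_{i=1}^d(1-\mathbb{L}^{-1}T_i)$ and bound the degrees. The first finite sum has monomials $T^{\underline{n}}$ with $\|\underline{n}\|\leq \|\underline{c}\|-d$, so after clearing it contributes terms of degree $\leq \|\underline{c}\|$. Each middle monomial $T^{\underline{f_J}(\underline{m})}$ has total degree at most $\|\underline{c}\|-(d-r_J)$; clearing cancels the $r_J$ denominators and multiplies by $\prod_{i\notin J}(1-\mathbb{L}^{-1}T_i)$, of degree $d-r_J$, keeping the total $\leq \|\underline{c}\|$. The term $T^{\underline{c}}$ has degree exactly $\|\underline{c}\|$. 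To identify $M(\mathbb{L},\ldots,\mathbb{L},\mathcal{O})$ I set $T_i=\mathbb{L}$ in all terms: for every $J\subsetneq I_0$ the uncanceled factor $\prod_{i\notin J}(1-\mathbb{L}^{-1}T_i)$ contains some $(1-\mathbb{L}^{-1}\mathbb{L})=0$, killing the contribution; only the $J=I_0$ term survives and evaluates to $[\mathcal{J}]\mathbb{L}^{-\|\underline{c}\|}\mathbb{L}^{\|\underline{c}\|}=[\mathcal{J}]$.

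No deep obstacle is expected; the main effort is the combinatorial bookkeeping in the middle range, where one must correctly enumerate the partition (\ref{H_formula}) and keep track of which denominator factors cancel when telescoping the geometric series in the coordinates indexed by $J$.
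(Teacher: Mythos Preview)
Your proposal is correct and follows essentially the same route as the paper: the integral representation from Corollary~\ref{cor2}(2), the decomposition $\mathcal{O}=\bigsqcup_{J\subseteq I_0}H_J(\mathcal{O})$ into cylindric pieces via Lemma~\ref{lema6}, and the evaluation of each piece using Lemma~\ref{lema4}, Lemma~\ref{lema3}, and Lemma~\ref{lema7} for $J=\emptyset$, $J=I_0$, and $\emptyset\subsetneq J\subsetneq I_0$ respectively. Your treatment of part~(2) is in fact more explicit than the paper's, which simply calls it a ``straight consequence'' of~(1); your degree count and the observation that only the $J=I_0$ term survives at $T_i=\mathbb{L}$ are exactly what is needed.
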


\begin{proof}
Since $Z\left(  T_{1},\ldots,T_{d},\mathcal{O}\right)  =\left[  \pi
_{\underline{c}-\underline{1}}\left(  \mathcal{O}^{\times}\right)  \right]
^{-1}\mathbb{L}^{\left\Vert \underline{c}\right\Vert }%
{\textstyle\int\nolimits_{\mathcal{O}}}
T^{\underline{v}\left(  \underline{z}\right)  }d\chi_{g}$ (cf. Corollary
\ref{cor2} (2)) and $\mathcal{O}=\cup_{J\subseteq I_{0}}H_{J}\left(
\mathcal{O}\right)  $ is a disjoint union of cylindric subsets (cf. Lemma
\ref{lema6} (2)), $Z\left(  T_{1},\ldots,T_{d},\mathcal{O}\right)  $ is equal
to a finite sum of integrals of type
\[
Z_{H_{J}}\left(  T_{1},\ldots,T_{d},\mathcal{O}\right)  :=\left[
\pi_{\underline{c}-\underline{1}}\left(  \mathcal{O}^{\times}\right)  \right]
^{-1}\mathbb{L}^{\left\Vert \underline{c}\right\Vert }%
{\textstyle\int\limits_{H_{J}\left(  \mathcal{O}\right)  }}
T^{\underline{v}\left(  \underline{z}\right)  }d\chi_{g} .
\]

In the case in which $J=\emptyset$,
\[
Z_{H_{\emptyset}}\left(  T_{1},\ldots,T_{d},\mathcal{O}\right)  =%
{\textstyle\sum\limits_{%
\begin{array}
[c]{c}%
\underline{n}\in S\\
\underline{0}\leq\underline{n}<\underline{c}%
\end{array}
}}
\left[  \mathcal{I}_{\underline{n}}\right]  \mathbb{L}^{-\left\Vert
\underline{n}\right\Vert }T^{\underline{n}}\in\mathcal{M}_{k}\left[
T_{1},\ldots,T_{d}\right]  ,
\]
and the degree of $Z_{H_{\emptyset}}\left(  T_{1},\ldots,T_{d},\mathcal{O}%
\right)  $ is less than or equal to $\left\Vert \underline{c}\right\Vert -d$.

In the case $J=I_{0}$, by using Lemma \ref{lema3}, we have
\[
Z_{H_{I_{0}}}\left(  T_{1},\ldots,T_{d},\mathcal{O}\right)  =\left[
\mathcal{J}\right]  \mathbb{L}^{-\left\Vert \underline{c}\right\Vert }%
\frac{T^{\underline{c}}}{%
{\textstyle\prod\limits_{i=1}^{d}}
\left(  1-\mathbb{L}^{-1}T_{i}\right)  }.
\]

In the case in which $\emptyset\subsetneq J\subsetneq I_{0}$, we use the fact
that $H_{J}\left(  \mathcal{O}\right)  $ is a finite disjoint union of
cylindric sets of the form $H_{J,\underline{m}}\left(  \mathcal{O}\right)  $
(cf. (\ref{H_formula})) to reduce the problem to the computation of the
following integral:
\begin{align*}
Z_{H_{J,\underline{m}}}\left(  T_{1},\ldots,T_{d},\mathcal{O}\right)   &
:=\left[  \pi_{\underline{c}-\underline{1}}\left(  \mathcal{O}^{\times
}\right)  \right]  ^{-1}\mathbb{L}^{\left\Vert \underline{c}\right\Vert }%
{\textstyle\int\limits_{H_{J,\underline{m}}\left(  \mathcal{O}\right)  }}
T^{\underline{v}\left(  \underline{z}\right)  }d\chi_{g}\\
&  =\frac{\left[  \mathcal{I}_{\underline{f_{J}}(\underline{m})}\right]
\mathbb{L}^{-\left\Vert \underline{f_{J}}(\underline{m})\right\Vert
}T^{\underline{f_{J}}(\underline{m})}}{%
{\textstyle\prod\limits_{i=1}^{r_{J}}}
\left(  1-\mathbb{L}^{-1}T_{i}\right)  },
\end{align*}
(cf. Lemma \ref{lema7}), where $\underline{f_{J}}(\underline{m})=\left(
c_{1},\ldots,c_{r_{J}},m_{r_{J}+1},\ldots,m_{d}\right)  \in S$, with
$m_{i}<c_{i}$, $r_{J}+1\leqslant i\leq d$, and $1\leqslant r_{J}<d$.\ Now the
announced explicit formula follows from the previous discussion, and the
second part of the theorem is a straight consequence of it.
\end{proof}

\begin{corollary}
\label{cor3}The zeta function $Z\left(  T,\mathcal{O}\right)  $ is a rational
function of the form%
\[
Z\left(  T,\mathcal{O}\right)  =\frac{R\left(  T,\mathcal{O}\right)  }{\left(
1-\mathbb{L}^{-1}T\right)  ^{d}},
\]
where $R\left(  T,\mathcal{O}\right)  $\ is a polynomial in $\mathcal{M}%
_{k}\left[  T\right]  $ of degree at most $\left\Vert \underline{c}\right\Vert
$ that satisfies $R\left(  \mathbb{L},\mathcal{O}\right)  =\left[
\mathcal{J}\right]  $.
\end{corollary}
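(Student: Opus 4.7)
The plan is to derive the corollary directly from Theorem \ref{teo1} by specializing the variables. Recall that by definition (\ref{zeta2}), $Z(T,\mathcal{O})=Z(T,\ldots,T,\mathcal{O})$, and Theorem \ref{teo1} (2) expresses the multivariate zeta function as
\[
Z(T_{1},\ldots,T_{d},\mathcal{O})=\frac{M(T_{1},\ldots,T_{d},\mathcal{O})}{\prod_{i=1}^{d}(1-\mathbb{L}^{-1}T_{i})},
\]
with $M\in\mathcal{M}_{k}[T_{1},\ldots,T_{d}]$ of total degree at most $\|\underline{c}\|$ and $M(\mathbb{L},\ldots,\mathbb{L},\mathcal{O})=[\mathcal{J}]$.

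First, I set $T_{i}=T$ for $i=1,\ldots,d$ in the displayed identity. The denominator becomes $(1-\mathbb{L}^{-1}T)^{d}$, producing the announced shape of the rational function with $R(T,\mathcal{O}):=M(T,\ldots,T,\mathcal{O})\in\mathcal{M}_{k}[T]$. Second, I observe that the specialization of a multivariate polynomial of total degree at most $\|\underline{c}\|$ obtained by identifying all variables yields a univariate polynomial whose degree does not exceed that total degree; hence $\deg_{T}R(T,\mathcal{O})\le\|\underline{c}\|$. Third, evaluating at $T=\mathbb{L}$ gives $R(\mathbb{L},\mathcal{O})=M(\mathbb{L},\ldots,\mathbb{L},\mathcal{O})=[\mathcal{J}]$, which yields the normalization statement.

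There is no real obstacle here: the corollary is a formal consequence of Theorem \ref{teo1} (2) obtained by the diagonal substitution $T_{1}=\cdots=T_{d}=T$, and the three claims (rationality, shape of the denominator, degree bound and value at $\mathbb{L}$) each follow immediately from the corresponding properties of $M(T_{1},\ldots,T_{d},\mathcal{O})$ that are already established.
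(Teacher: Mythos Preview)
Your proof is correct and follows exactly the approach intended in the paper: the corollary is stated immediately after Theorem~\ref{teo1} with no separate proof, since it is obtained precisely by the diagonal substitution $T_{1}=\cdots=T_{d}=T$ in part~(2) of that theorem.
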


\begin{corollary}
\label{coroPg}The generalized Poincar\'{e} series is a rational function of
the form%
\[
P_{g}\left(  T_{1},\ldots,T_{d},\mathcal{O}\right)  =\frac{Q\left(
T_{1},\ldots,T_{d},\mathcal{O}\right)  }{%
{\textstyle\prod\limits_{i=1}^{d}}
\left(  1-\mathbb{L}^{-1}T_{i}\right)  },
\]
where $Q\left(  T_{1},\ldots,T_{d},\mathcal{O}\right)  $\ is a polynomial in
$\mathcal{M}_{k}\left[  T_{1},\ldots,T_{d}\right]  $ of degree at most
$\left\Vert \underline{c}\right\Vert $ that satisfies $Q\left(  \mathbb{L}%
,\ldots,\mathbb{L}, \mathcal{O} \right)  =\mathbb{L}^{-\delta-1}\left[
\mathcal{J}\right]  $.
\end{corollary}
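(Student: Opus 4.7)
The plan is to derive the statement as an immediate corollary of Theorem \ref{teo1}(2) via the comparison formula in Lemma \ref{rem5}. Concretely, Lemma \ref{rem5} asserts the identity $Z(T_1,\ldots,T_d,\mathcal{O}) = \mathbb{L}^{\delta+1}\, P_g(T_1,\ldots,T_d,\mathcal{O})$, so dividing both sides by $\mathbb{L}^{\delta+1}$ (which is invertible in $\mathcal{M}_k$) yields
\[
P_g(T_1,\ldots,T_d,\mathcal{O}) \;=\; \mathbb{L}^{-\delta-1}\, Z(T_1,\ldots,T_d,\mathcal{O}).
\]

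Next I would substitute the explicit rational form of $Z(T_1,\ldots,T_d,\mathcal{O})$ supplied by Theorem \ref{teo1}(2):
\[
P_g(T_1,\ldots,T_d,\mathcal{O}) \;=\; \frac{\mathbb{L}^{-\delta-1}\,M(T_1,\ldots,T_d,\mathcal{O})}{\prod_{i=1}^{d}\bigl(1-\mathbb{L}^{-1}T_i\bigr)}.
\]
Setting $Q(T_1,\ldots,T_d,\mathcal{O}) := \mathbb{L}^{-\delta-1}\, M(T_1,\ldots,T_d,\mathcal{O})$ displays $P_g$ in the desired form. Since $Q$ differs from $M$ only by the scalar $\mathbb{L}^{-\delta-1}\in\mathcal{M}_k$, it lies in $\mathcal{M}_k[T_1,\ldots,T_d]$ and has total degree bounded by $\|\underline{c}\|$, inheriting the degree bound from Theorem \ref{teo1}(2).

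Finally, evaluating $Q$ at $T_1=\cdots=T_d=\mathbb{L}$ and using $M(\mathbb{L},\ldots,\mathbb{L},\mathcal{O})=[\mathcal{J}]$ from Theorem \ref{teo1}(2) gives $Q(\mathbb{L},\ldots,\mathbb{L},\mathcal{O})=\mathbb{L}^{-\delta-1}[\mathcal{J}]$, which is exactly the stated normalization. There is no real obstacle here: the entire work has been absorbed into the proof of Theorem \ref{teo1} and the comparison Lemma \ref{rem5}; the only thing to be careful about is that the scaling factor $\mathbb{L}^{-\delta-1}$ is a unit in $\mathcal{M}_k$, so the rationality and degree bound transfer cleanly from $M$ to $Q$.
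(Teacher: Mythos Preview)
Your proof is correct and matches the paper's intended approach: the paper states no separate proof for this corollary, but the introduction to Section~\ref{rationality} explicitly says the rationality of $P_g$ follows from that of $Z$ ``by Lemma~\ref{rem5}'', which is precisely the route you take. The definition $Q=\mathbb{L}^{-\delta-1}M$ together with Theorem~\ref{teo1}(2) gives the degree bound and the value $Q(\mathbb{L},\ldots,\mathbb{L},\mathcal{O})=\mathbb{L}^{-\delta-1}[\mathcal{J}]$ exactly as you argue.
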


\begin{definition}
Let $k$ be a field of characteristic $p\geq0$. Let $\mathcal{O}=\widehat
{O}_{P,Y}$, where $Y$ is an algebraic curve over $k$, and $P$ is a singular
point of $Y$. We say that $k$ is big enough for $Y$, if for every singular
point $P$ in $Y$ the following two conditions hold: 1) $\mathcal{O}$ is
totally rational and 2) $\mathcal{J}\cong\left(  G_{m}\right)  ^{d-1}%
\times\left(  G_{a}\right)  ^{\delta-d+1}$.
\end{definition}

Note that by Remark \ref{nota2}, the condition `$k$ is big enough for $Y$' is
fulfilled when $p$ is big enough.

\begin{corollary}
\label{cor4}Let $k$ be a field of characteristic $p\geq0$. Let $\mathcal{O}%
=\widehat{O}_{P,Y}$ where $Y$ is an algebraic curve over $k$, and $P$ is a
singular point of $Y$. If $k$ is big enough for $Y$, then $Z\left(
T_{1},\ldots,T_{d},\mathcal{O}\right)  $ is completely determined by the
semigroup of $\mathcal{O}$.
\end{corollary}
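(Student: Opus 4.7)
The plan is to exhibit $Z(T_1,\ldots,T_d,\mathcal{O})$ as a universal expression in $\mathbb{L}$ and the $T_i$ whose coefficients depend only on $S$, by specializing every ingredient of the closed formula in Theorem \ref{teo1}. That formula involves four types of data: the conductor $\underline{c}$ (together with $d$ and $\delta$), the class $[\pi_{\underline{c}-\underline{1}}(\mathcal{O}^{\times})]$, the class $[\mathcal{J}]$, and the classes $[\mathcal{I}_{\underline{n}}]$ for $\underline{n}$ running over a finite subset of $S$. I would argue term by term.

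Three of these are immediate. The conductor $\underline{c}$ is by definition a semigroup invariant of $S$, and so is the number of branches $d$. The singularity degree $\delta$ is recoverable from $S$, either directly or via the identity $\dim_k \mathcal{O}/\mathcal{F}=\|\underline{c}\|-\delta$ combined with the colength computation below. The hypothesis that $k$ is big enough for $Y$ forces $\mathcal{J}\cong (G_m)^{d-1}\times(G_a)^{\delta-d+1}$, and Lemma \ref{lema2} then yields the explicit expressions
$$[\mathcal{J}]=(\mathbb{L}-1)^{d-1}\mathbb{L}^{\delta-d+1}, \qquad [\pi_{\underline{c}-\underline{1}}(\mathcal{O}^{\times})]=(\mathbb{L}-1)\mathbb{L}^{\|\underline{c}\|-\delta-1},$$
both of which are polynomials in $\mathbb{L}$ with $S$-determined exponents.

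The only step with real content is showing that each class $[\mathcal{I}_{\underline{n}}]$ depends only on $S$. By Proposition \ref{lema8},
$$[\mathcal{I}_{\underline{n}}]=(\mathbb{L}-1)^{-1}\mathbb{L}^{\|\underline{n}\|+1}\sum_{I\subseteq I_0}(-1)^{\#I}\mathbb{L}^{-l(\underline{n}+\underline{1}_I)},$$
so it suffices to prove that the colength function $l(\underline{n})=\dim_k\mathcal{O}/J_{\underline{n}}(\mathcal{O})$ is a combinatorial invariant of $S$. I would establish this by iterating the elementary identity $l(\underline{n}+\underline{e}_i)=l(\underline{n})+\varepsilon_i(\underline{n})$, where $\varepsilon_i(\underline{n}):=\dim_k J_{\underline{n}}(\mathcal{O})/J_{\underline{n}+\underline{e}_i}(\mathcal{O})\in\{0,1\}$. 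Extending the argument of Remark \ref{nota1}, one checks that $\varepsilon_i(\underline{n})=1$ if and only if there exists $\underline{s}\in S$ with $\underline{s}\geq\underline{n}$ and $s_i=n_i$, a condition visibly depending only on $S$. Starting from $l(\underline{0})=0$ and telescoping along any monotone path in $\mathbb{N}^d$ from $\underline{0}$ to $\underline{n}$ then expresses $l(\underline{n})$ as a sum of semigroup-theoretic indicators.

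The only potential obstacle is the semigroup characterization of $\varepsilon_i(\underline{n})$ for arbitrary $\underline{n}$, rather than the restricted biconditional recorded in Remark \ref{nota1}; this, however, is classical in the theory of semigroups of values of curve singularities and can be proved by the same sort of exact-sequence argument used in the proof of Proposition \ref{lema8}. Once it is in hand, substituting the resulting $S$-theoretic formulas for $\underline{c}$, $\delta$, $[\mathcal{J}]$, $[\pi_{\underline{c}-\underline{1}}(\mathcal{O}^{\times})]$, and the $[\mathcal{I}_{\underline{n}}]$ into the expression in Theorem \ref{teo1} realizes $Z(T_1,\ldots,T_d,\mathcal{O})$ as a rational function in $\mathbb{L}$ and $T_1,\ldots,T_d$ that is entirely determined by the semigroup $S$.
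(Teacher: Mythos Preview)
Your proposal is correct and follows essentially the same approach as the paper: invoke the explicit formula of Theorem \ref{teo1}, then observe that each ingredient --- the sets $B_J$, the classes $[\pi_{\underline{c}-\underline{1}}(\mathcal{O}^\times)]$, $[\mathcal{J}]$, and the $[\mathcal{I}_{\underline{n}}]$ --- is determined by $S$ via Lemma \ref{lema2} and Proposition \ref{lema8}, with the ``big enough'' hypothesis needed precisely to pin down $[\mathcal{J}]$ in positive characteristic. The paper's own proof is terser and simply cites Lemma \ref{lema2} and Proposition \ref{lema8} without spelling out why the colengths $l(\underline{n})$ are semigroup invariants; your telescoping argument via the characterization of $\varepsilon_i(\underline{n})$ makes this step explicit and is a welcome clarification rather than a departure.
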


\begin{proof}
By the explicit formula of Theorem \ref{teo1}, $Z\left(  T_{1},\ldots
,T_{d},\mathcal{O}\right)  $ is a rational function in the variables
$T_{1},\ldots,T_{d}$, and $\mathbb{L}$, depending on $S$, $\left[
\pi_{\underline{c}-\underline{1}}\left(  \mathcal{O}^{\times}\right)  \right]
$, $\left[  \mathcal{J}\right]  $, and $\left[  \mathcal{I}\underline{_{m}%
}\right]  $ for $\left\Vert \underline{m}\right\Vert <\left\Vert \underline
{c}\right\Vert $. In characteristic zero, $S$ determines uniquely $\left[
\pi_{\underline{c}-\underline{1}}\left(  \mathcal{O}^{\times}\right)  \right]
$, $\left[  \mathcal{J}\right]  $, $\left[  \mathcal{I}\underline{_{m}%
}\right]  $ for $\left\Vert \underline{m}\right\Vert <\left\Vert \underline
{c}\right\Vert $ (cf. Lemma \ref{lema2} and Proposition \ref{lema8}). If the
characteristic is $p>0$, the hypothesis \textquotedblleft$k$ is big enough for
$Y$\textquotedblright\ is required to assure that $\left[  \mathcal{J}\right]
$ is determined by the semigroup of $\mathcal{O}$.
\end{proof}

\section{Additive Invariants and Specialization of Zeta Functions}

\begin{definition}
\label{universalzetafunction} Put $k=\mathbb{C}$. Consider a semigroup
$S\subset\mathbb{N}^{d}$, such that $S=S\left(  \mathcal{O}\right)  $ for some
$\mathcal{O}=\widehat{O}_{X,P}$ where $X$ is an algebraic curve over
$\mathbb{C}$, and $P$ is a singular point of $X$. We set%

\[
\mathcal{I}_{\underline{n}}\left(  U\right)  :=\left(  U-1\right)
^{-1}U^{\left\Vert \underline{n}\right\Vert +1}%
{\textstyle\sum\limits_{I\subseteq I_{0}}}
\left(  -1\right)  ^{\#(I)}U^{-l\left(  \underline{n}+\underline{1}%
_{I}\right)  }\text{, for }\underline{n}\in S,
\]
and%
\[
\mathcal{Z}\left(  T_{1},\ldots,T_{d},U,S\right)  :=%
{\textstyle\sum\limits_{%
\begin{array}
[c]{c}%
\underline{n}\in S\\
\underline{0}\leq\underline{n}<\underline{c}%
\end{array}
}}
\mathcal{I}_{\underline{n}}\left(  U\right)  U^{-\left\Vert \underline
{n}\right\Vert }T^{\underline{n}}%
\]%
\begin{align*}
&  +%
{\textstyle\sum\limits_{\emptyset\subsetneq J\subsetneq I_{0}}}
\text{ \ }%
{\textstyle\sum\limits_{\substack{\underline{m}\in B_{J}}}}
\left(  U-1\right)  U^{\left\Vert \underline{c}\right\Vert -\delta
-1}\mathcal{I}_{\underline{f_{J}}(\underline{m})}\left(  U\right)
U^{-\left\Vert \underline{c}\right\Vert -\left\Vert \underline{f_{J}%
}(\underline{m})\right\Vert }\frac{T^{\underline{f_{J}}(\underline{m})}}{%
{\textstyle\prod\limits_{i=1}^{r_{J}}}
\left(  1-U^{-1}T_{i}\right)  }\\
&  +\left(  U-1\right)  ^{d-1}U^{\delta-d+1}U^{-\left\Vert \underline
{c}\right\Vert }\frac{T^{\underline{c}}}{%
{\textstyle\prod\limits_{i=1}^{d}}
\left(  1-U^{-1}T_{i}\right)  },
\end{align*}
where $\underline{f_{J}}(\underline{m})=\left(  c_{1},\ldots,c_{r_{J}%
},m_{r_{J}+1},\ldots,m_{d}\right)  \in S$, with $m_{i}<c_{i}$, $r_{J}%
+1\leqslant i\leq d$, and $1\leqslant r_{J}<d$ are as in the explicit formula
given in Theorem \ref{teo1} (1), and $U$ is an indeterminate. We call
$\mathcal{Z}\left(  T_{1},\ldots,T_{d},U,S\right)  $ the universal zeta
function associated to $S$.
\end{definition}

By definition $\mathcal{Z}\left(  T_{1},\ldots,T_{d},U,S\right)  $ is
completely determined by $S$.

\begin{lemma}
\label{TheoremB} Assume that $k$ is big enough for $Y$. If $S=S\left(
\mathcal{O}\right)  $, then%
\[
Z\left(  T_{1},\ldots,T_{d},\mathcal{O}\right)  =\mathcal{Z}\left(
T_{1},\ldots,T_{d},U,S\right)  \mid_{U=\left[  \mathbb{A}_{k}^{1}\right]  }.
\]

\end{lemma}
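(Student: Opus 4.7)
The strategy is a direct term-by-term comparison between the explicit formula given in Theorem~\ref{teo1}(1) for $Z(T_1,\ldots,T_d,\mathcal{O})$ and the definition of $\mathcal{Z}(T_1,\ldots,T_d,U,S)$, specialised at $U=\mathbb{L}$. The universal zeta function has been engineered so that each piece of Theorem~\ref{teo1}(1) has a visible counterpart with $\mathbb{L}$ replaced by $U$; what the proof really has to justify is that, under the hypothesis ``$k$ is big enough for $Y$'', all the classes in $\mathcal{M}_k$ appearing in Theorem~\ref{teo1}(1) specialise exactly to the universal rational expressions in $U$ evaluated at $U=\mathbb{L}$.

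First I would invoke Theorem~\ref{teo1}(1) to write $Z(T_1,\ldots,T_d,\mathcal{O})$ as the three-part sum indexed by $J=\emptyset$, $\emptyset\subsetneq J\subsetneq I_0$, and $J=I_0$. Then I would substitute in the closed-form expressions of the motivic invariants that appear there. Specifically: Lemma~\ref{lema2}(2) gives $[\pi_{\underline{c}-\underline{1}}(\mathcal{O}^\times)]=(\mathbb{L}-1)\mathbb{L}^{\|\underline{c}\|-\delta-1}$, which matches the factor $(U-1)U^{\|\underline{c}\|-\delta-1}$ in the $\emptyset\subsetneq J\subsetneq I_0$ sum of $\mathcal{Z}$; Lemma~\ref{lema2}(1), together with the hypothesis that $k$ is big enough for $Y$ (which is what secures $\mathcal{J}\cong(G_m)^{d-1}\times(G_a)^{\delta-d+1}$), yields $[\mathcal{J}]=(\mathbb{L}-1)^{d-1}\mathbb{L}^{\delta-d+1}$, matching the $J=I_0$ term; and Proposition~\ref{lema8} gives $[\mathcal{I}_{\underline{n}}]=\mathcal{I}_{\underline{n}}(U)|_{U=\mathbb{L}}$ for every $\underline{n}\in S$. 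Collecting these three substitutions, each summand of Theorem~\ref{teo1}(1) maps to the corresponding summand of the universal series evaluated at $U=\mathbb{L}$, and the identity follows.

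The one point that needs a brief check (and is, I expect, the only real obstacle) is the well-definedness issue implicit in the statement: the universal object $\mathcal{Z}(T_1,\ldots,T_d,U,S)$ must depend only on $S$, so the ingredients $l(\underline{n}+\underline{1}_I)$, $\delta$, $\underline{c}$, $B_J$, and $\underline{f_J}(\underline{m})$ have to be recoverable from the semigroup. The conductor $\underline{c}$ and the index sets $B_J$, $\underline{f_J}(\underline{m})$ are manifestly semigroup invariants; $\delta=\#(\mathbb{N}^d\setminus S)$ is a classical semigroup invariant; and $l(\underline{n})$ is computed inductively via Remark~\ref{nota1}, which tells us that $l(\underline{n}+\underline{e}_i)-l(\underline{n})$ equals $1$ or $0$ according to whether $\underline{n}\in S$ or not, so $l(\underline{n})$ is determined by $S$ alone (starting from $l(\underline{0})=0$ and the fact that $\underline{0}\in S$). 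This ensures that the equality $Z(T_1,\ldots,T_d,\mathcal{O})=\mathcal{Z}(T_1,\ldots,T_d,U,S)|_{U=[\mathbb{A}_k^1]}$ is a genuine specialization identity and not an accidental coincidence, and completes the proof.
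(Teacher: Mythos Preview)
Your argument is correct and is essentially the paper's own approach: the paper's proof simply cites Corollary~\ref{cor4}, whose proof in turn is exactly the term-by-term comparison via Theorem~\ref{teo1}(1), Lemma~\ref{lema2}, and Proposition~\ref{lema8} that you spell out.

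One small correction to your side remarks on well-definedness: the formula $\delta=\#(\mathbb{N}^d\setminus S)$ is false for $d\ge 2$ (the complement is typically infinite, as in Example~\ref{ej2}); and Remark~\ref{nota1} only says that $\underline{n}\in S$ iff $\dim_k J_{\underline{n}}/J_{\underline{n}+\underline{e}_i}=1$ for \emph{every} $i$, so your inductive recipe for $l(\underline{n})$ is not quite right as stated. Both conclusions are nevertheless correct: $l(\underline{n})$ is determined by $S$ (each step $l(\underline{n}+\underline{e}_i)-l(\underline{n})\in\{0,1\}$ records whether some element of $S$ lies in $\{\underline{m}\ge\underline{n}:m_i=n_i\}$, a purely semigroup condition), and then $\delta=\|\underline{c}\|-l(\underline{c})$ recovers $\delta$ from $S$.
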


\begin{proof}
The result follows from Corollary \ref{cor4}.
\end{proof}

\begin{remark}
Let $R$ be a ring. An \textit{additive invariant} is a map $\lambda
:Var_{k}\rightarrow R$ that satisfies the same conditions given in the
definition of the Grothendieck symbol in the category \ of $k$-algebraic
varieties (see e.g. \cite{L}, \cite{Ve}). By construction, the map
$Var_{k}\rightarrow K_{0}\left(  Var_{k}\right)  $ $:V\mapsto\left[  V\right]
$ is a universal additive invariant, i.e., the composition with $\left[
\cdot\right]  $ \ gives a bijection between the ring morphisms $K_{0}\left(
Var_{k}\right)  \rightarrow R$ and additive invariants $Var_{k}\rightarrow R$.

In the complex case, the Euler characteristic
\[
\chi\left(  X\right)  =%
{\textstyle\sum\nolimits_{i}}
\left(  -1\right)  ^{i}rank\left(  H_{c}^{i}\left(  X\left(  \mathbb{C}%
\right)  ,\mathbb{C}\right)  \right)
\]
gives rise to an additive invariant $\chi:Var_{\mathbb{C}}\rightarrow
\mathbb{Z}$. Since $\chi\left(  \mathbb{A}_{\mathbb{C}}^{1}\right)  =1$, the
Euler characteristic extends to a morphism $\mathcal{M}_{\mathbb{C}%
}\rightarrow\mathbb{Z}$. Then by specializing $\left[  \cdot\right]  $ to
$\chi\left(  \cdot\right)  $ in (\ref{zeta1}) and (\ref{zeta2}) we obtain two
`topological zeta functions', denoted by $\chi\left(  Z\left(  T_{1}%
,\ldots,T_{d},\mathcal{O}\right)  \right)  $ and $\chi\left(  Z\left(
T,\mathcal{O}\right)  \right)  $. From a computational point of view, these
specializations are obtained by replacing $\mathbb{L}$ by $\boldsymbol{1}$ in
the corresponding expressions.
\end{remark}

\begin{remark}
Let $(X,0)\subset(\mathbb{C}^{2},0)$ be a reduced plane curve singularity
defined by an equation $f=0$, with $f\in\mathcal{O}_{(\mathbb{C}^{2},0)}$
reduced. Let $h_{f}:V_{f}\rightarrow V_{f}$ be the monodromy transformation of
the singularity $f$ acting on its Milnor fiber $V_{f}$ (see \cite{A}). The
zeta function of $h_{f}$ (also called zeta function of the monodromy) is
defined to be
\[
\varsigma_{f}\left(  T\right)  :=\prod_{i\geq0}\left[  \mathrm{det}\left(
\mathrm{id}-T\cdot(h_{f})_{\ast}\mid_{H_{i}(V_{f};\mathbb{C})}\right)
\right]  ^{(-1)^{i+1}}.
\]

\end{remark}

The following theorem is due to Campillo, Delgado and Gusein-Zade
(\cite[Theorem 1]{CDG}):

\begin{theorem}
\label{TheoremMONO} [Campillo-Delgado-Gusein-Zade]\label{TheoremC} Put
$k=\mathbb{C}$. Then for any $\mathcal{O}=\mathcal{O}_{\left(  \mathbb{C}%
^{2},0\right)  }/\left(  f\right)  $, with $f\in\mathcal{O}_{\left(
\mathbb{C}^{2},0\right)  }$ reduced, and for any $S=S\left(  \mathcal{O}%
\right)  $, we have
\[
\varsigma_{f}\left(  T\right)  =\mathcal{Z}\left(  T_{1},\ldots,T_{d}%
,U,S\right)  \mid_{%
\begin{array}
[c]{l}%
T_{1}=\ldots=T_{d}=T\\
U=1
\end{array}
.}%
\]

\end{theorem}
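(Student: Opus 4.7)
The plan is to reduce the identity to the Campillo--Delgado--Gusein-Zade theorem of \cite{CDG}, which is the substantive input, by threading through the relations already established in the paper. Since $k=\mathbb{C}$ is automatically big enough for $Y$, Lemma \ref{TheoremB} applies and gives
\[
Z(T_{1},\ldots,T_{d},\mathcal{O}) = \mathcal{Z}(T_{1},\ldots,T_{d},U,S)\bigm|_{U=\mathbb{L}}.
\]
Thus $\mathcal{Z}(T_{1},\ldots,T_{d},U,S)$ is literally the rational expression for $Z(T_{1},\ldots,T_{d},\mathcal{O})$ written in the indeterminate $U$ in place of the class $\mathbb{L}$. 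Setting $U=1$ is therefore the same operation as specializing $\mathbb{L}\mapsto 1$, i.e., passing to the topological Euler characteristic additive invariant $\chi:\mathcal{M}_{\mathbb{C}}\to \mathbb{Z}$.

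The next step is to reduce from the zeta function $Z$ to the generalized Poincar\'e series $P_{g}$, because the result of \cite{CDG} is stated for $P_{g}$. By Lemma \ref{rem5},
\[
Z(T_{1},\ldots,T_{d},\mathcal{O}) \;=\; \mathbb{L}^{\delta+1}\, P_{g}(T_{1},\ldots,T_{d},\mathcal{O}).
\]
Applying $\chi$ to both sides and using $\chi(\mathbb{L})=1$, the prefactor $\mathbb{L}^{\delta+1}$ disappears, so the specialization at $U=1$ of $\mathcal{Z}(T_{1},\ldots,T_{d},U,S)$ equals $\chi\bigl(P_{g}(T_{1},\ldots,T_{d},\mathcal{O})\bigr)$.

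Finally, I would invoke Theorem~1 of \cite{CDG} (stated here as Theorem \ref{TheoremC}, but from the Poincar\'e-series side): for a reduced plane curve germ $f\in\mathcal{O}_{(\mathbb{C}^{2},0)}$, the diagonal Euler-characteristic specialization of the generalized Poincar\'e series coincides with the monodromy zeta function, that is,
\[
\chi\bigl(P_{g}(T,\ldots,T,\mathcal{O})\bigr) \;=\; \varsigma_{f}(T).
\]
Combining this with the chain of equalities above yields the claimed formula
\[
\varsigma_{f}(T) \;=\; \mathcal{Z}(T_{1},\ldots,T_{d},U,S)\bigm|_{T_{1}=\cdots=T_{d}=T,\; U=1}.
\]

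The only nontrivial ingredient is the Campillo--Delgado--Gusein-Zade identification of $\chi(P_{g})$ with $\varsigma_{f}$, which is a deep result relying on the A'Campo formula and an inclusion--exclusion over the strata defined by the multi-index valuation; all the other steps are bookkeeping via Lemmas \ref{rem5} and \ref{TheoremB}. The main conceptual obstacle, then, is already absorbed in citing \cite{CDG}; the work specific to this paper is only to verify that the combinatorial definition of $\mathcal{Z}$ is consistent with the substitution $\mathbb{L}\leftrightarrow U$ and $\chi$-specialization, which is precisely what Corollary~\ref{cor4} and the explicit formula of Theorem~\ref{teo1} guarantee.
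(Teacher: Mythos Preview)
Your proof is correct and follows essentially the same route as the paper: invoke Lemma~\ref{TheoremB} to identify $\mathcal{Z}|_{U=\mathbb{L}}$ with $Z$, use Lemma~\ref{rem5} to pass between $Z$ and $P_g$ (the $\mathbb{L}^{\delta+1}$ factor vanishing under $\chi$), and then cite the Campillo--Delgado--Gusein-Zade identification $\chi(P_g)=\varsigma_f$ as the substantive input. The paper's proof is slightly terser but uses exactly the same ingredients in the same order.
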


\begin{proof}
As a consequence of the results of Campillo, Delgado, and Gusein-Zade (see
\cite{CDG1}, \cite{CDG2}, \cite{CDG4}) and Lemma \ref{rem5}, we have
$\chi\left(  Z\left(  T,\mathcal{O}\right)  \right)  =\varsigma_{f}\left(
T\right)  $, the zeta function of the monodromy $\varsigma_{f}\left(
T\right)  $ associated to the germ of function $f:\left(  \mathbb{C}%
^{2},0\right)  \rightarrow\left(  \mathbb{C},0\right)  $. By the previous
remark and Lemma \ref{TheoremB}, we have
\[
\chi\left(  Z\left(  T,\mathcal{O}\right)  \right)  =Z\left(  T,\mathcal{O}%
\right)  \mid_{\mathbb{L} \rightarrow1}=\mathcal{Z}\left(  T_{1},\ldots
,T_{d},U,S\right)  \mid_{%
\begin{array}
[c]{l}%
T_{1}=\ldots=T_{d}=T\\
U=1
\end{array}
.}%
\]

\end{proof}

\begin{remark}
In \cite{Z1} the second author introduced a Dirichlet series $Z(\mathrm{Ca}%
(Y),T)$ associated to the effective Cartier divisors on an algebraic curve
defined over a finite field $k=\mathbb{F}_{q}$. \ This zeta function admits an
Euler product of the form%
\[
Z(\mathrm{Ca}(Y),T)=%
{\textstyle\prod\limits_{P\in X}}
Z_{\mathrm{Ca}(Y)}(T,q,O_{P,Y}),
\]
with
\[
Z_{\mathrm{Ca}(Y)}(T,q,O_{P,Y}):=Z_{\mathrm{Ca}(Y)}(T,O_{P,Y})=%
{\textstyle\sum\limits_{I\subseteq O_{Y,P}}}
T^{\dim_{k}\left(  O_{P,Y}/I\right)  },
\]
where $I$ runs through all the principal ideals of $O_{P,Y}$. The notation
used here for the local factors of $Z(\mathrm{Ca}(Y),T)$ is a slightly
different to that used in \cite{Z1}. In addition, $Z_{\mathrm{Ca}%
(Y)}(T,O_{P,Y})=Z_{\mathrm{Ca}(Y)}(T,\widehat{O}_{P,Y})$, where $\widehat
{O}_{P,Y}$ is the completion of $O_{P,Y}$ with respect to the topology induced
by its maximal ideal. If $\widehat{O}_{P,Y}$ is totally rational, then
$Z_{\mathrm{Ca}(Y)}(T,\widehat{O}_{P,Y})$ is completely determined by the
semigroup of $\widehat{O}_{P,Y}$ (cf. \cite[Lemma 5.4 and Theorem 5.5]{Z1}).
\end{remark}

\begin{remark}
In the category of $\mathbb{F}_{q}$-algebraic varieties, $\left[
\cdot\right]  $ specializes to the counting rational points additive invariant
$\#\left(  \cdot\right)  $. In addition, for a cylindric subset $X\subset
\mathbb{P}\widetilde{\mathcal{O}}$ such that $X=\pi_{\underline{n}}^{-1}(Y)$
for a constructible subset $Y$ of $\mathbb{P}J_{\widetilde{\mathcal{O}}%
}^{\underline{n}}$, the only way to define the generalized Euler
characteristic $\chi_{g}(X)$ of $X$ is by specializing $\left[  \cdot\right]
$ to the counting map $\#\left(  \cdot\right)  $ that gives the number of
$\mathbb{F}_{q}$-rational points of a variety, i.e.,
\[
\chi_{g}(X)=\#(Y)\cdot q^{-||\underline{n}+\underline{1}||},
\]
see e.g. \cite{DM}. We denote by $\#\left(  Z\left(  T_{1},\ldots
,T_{d},\mathcal{O}\right)  \right)  $ the rational function obtained by
specializing $\left[  \cdot\right]  $ to $\#\left(  \cdot\right)  $. From a
computational point of view, $\#\left(  Z\left(  T_{1},\ldots,T_{d}%
,\mathcal{O}\right)  \right)  $ is obtained from $Z\left(  T_{1},\ldots
,T_{d},\mathcal{O}\right)  $ by replacing $\mathbb{L}$ by $q$.
\end{remark}

\begin{theorem}
\label{TheoremD}Let $\ k=\mathbb{F}_{q}$ and let $\mathcal{Z}\left(
T_{1},\ldots,T_{d},U,S\right)  $ be the universal zeta function for $S$. Let
$Y$ be an algebraic curve defined over $k$, and let $\widehat{O}_{P,Y}$ be the
completion of the local ring of $Y$ at a singular point $P$. Assume that $k$
is big enough for $Y$ and that $S=S\left(  \widehat{O}_{P,Y}\right)  $.

\noindent(1) For any $\mathcal{O}=\mathcal{O}_{\left(  \mathbb{C}%
^{2},0\right)  }/\left(  f\right)  $, with $f\in\mathcal{O}_{\left(
\mathbb{C}^{2},0\right)  }$ reduced, and $S=S\left(  \mathcal{O}\right)  $,%
\begin{align*}
Z_{\mathrm{Ca}(Y)}\left(  q^{-1}T,q,\widehat{O}_{P,Y}\right)   &  =\#\left(
Z\left(  T_{1},\ldots,T_{d},\widehat{O}_{P,Y}\right)  \right) \\
&  =\mathcal{Z}\left(  T_{1},\ldots,T_{d},U,S\right)  \mid_{%
\begin{array}
[c]{l}%
T_{1}=\ldots=T_{d}=T\\
U=q
\end{array}
.}%
\end{align*}
In particular $Z_{\mathrm{Ca}(Y)}\left(  q^{-1}T,q,\widehat{O}_{P,Y}\right)  $
depends only on $S$. In addition, and if $\widehat{O}_{P,Y}$\ is plane, then
$Z_{\mathrm{Ca}(Y)}\left(  q^{-1}T,q,\widehat{O}_{P,Y}\right)  $ is a complete
invariant of the equisingularity class of $\widehat{O}_{P,Y}$.

\noindent(2) For any $\mathcal{O}=\mathcal{O}_{\left(  \mathbb{C}%
^{2},0\right)  }/\left(  f\right)  $, with $f\in\mathcal{O}_{\left(
\mathbb{C}^{2},0\right)  }$, and $S=S\left(  \mathcal{O}\right)  $,
\[
Z_{\mathrm{Ca}(Y)}\left(  q^{-1}T,q,\widehat{O}_{P,Y}\right)  \mid_{q
\rightarrow1}=\varsigma_{f}\left(  T\right)  .
\]

\end{theorem}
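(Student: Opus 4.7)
The strategy is to assemble the statement from three ingredients already in the paper: a direct identification of the Dirichlet series $Z_{\mathrm{Ca}(Y)}$ with the specialization $\#(Z(T,\mathcal{O}))$, the factorization of $Z(T,\mathcal{O})$ through the universal zeta function given by Lemma \ref{TheoremB}, and the Campillo--Delgado--Gusein-Zade identification of Theorem \ref{TheoremC}.

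For the first equality in part (1), I would start from the definition $Z_{\mathrm{Ca}(Y)}(T,q,\widehat{O}_{P,Y})=\sum_{I} T^{\dim_{k}(\widehat{O}_{P,Y}/I)}$, where $I$ runs over principal ideals. For any non-zero divisor $\underline{z}\in\mathcal{O}$ one has the standard colength identity $\dim_{k}\mathcal{O}/\underline{z}\mathcal{O}=\dim_{k}\widetilde{\mathcal{O}}/\underline{z}\widetilde{\mathcal{O}}=\|\underline{v}(\underline{z})\|$ (the normalization and $\mathcal{O}$ share the conductor, and the integral closure is finite over $\mathcal{O}$), so grouping principal ideals by their image in $\mathcal{I}_{\underline{n}}$ gives
\[
Z_{\mathrm{Ca}(Y)}(T,q,\widehat{O}_{P,Y})=\sum_{\underline{n}\in S}\#(\mathcal{I}_{\underline{n}})\,T^{\|\underline{n}\|}.
\]
On the other side, specializing (\ref{zeta2}) along the additive invariant $[\cdot]\mapsto\#(\cdot)$ with $\mathbb{L}\mapsto q$ yields $\#(Z(T,\mathcal{O}))=\sum_{\underline{n}}\#(\mathcal{I}_{\underline{n}})\,q^{-\|\underline{n}\|}T^{\|\underline{n}\|}$, which matches the previous display after the substitution $T\mapsto q^{-1}T$. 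The third term of the triple equality then follows from Lemma \ref{TheoremB} by evaluating the universal series at $U=\mathbb{L}=q$. The dependence on $S$ alone is automatic since $\mathcal{Z}(T_{1},\ldots,T_{d},U,S)$ is built intrinsically from the semigroup, and the complete-invariant assertion for plane $\widehat{O}_{P,Y}$ follows by quoting the classical fact (cited via \cite{W}, \cite{ZA2}, \cite{CDG}) that the semigroup of a plane curve singularity determines its equisingularity class and conversely.

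For part (2), I would take the limit $q\to 1$ of the identity proved in part (1): the left-hand side tends to $Z_{\mathrm{Ca}(Y)}(q^{-1}T,q,\widehat{O}_{P,Y})\mid_{q\to 1}$, while the right-hand side tends to $\mathcal{Z}(T_{1},\ldots,T_{d},U,S)\mid_{T_{1}=\cdots=T_{d}=T,\,U=1}$, which by Theorem \ref{TheoremC} equals $\varsigma_{f}(T)$. The main obstacle, and the one genuinely requiring care, is making sense of the specialization $U=1$: the explicit formula for $\mathcal{Z}$ contains denominators $(1-U^{-1}T_{i})$ together with numerator factors of the form $(U-1)^{d-1}U^{\delta-d+1}$ (coming from $[\mathcal{J}]$) and $(U-1)U^{\|\underline{c}\|-\delta-1}$ (coming from $[\pi_{\underline{c}-\underline{1}}(\mathcal{O}^{\times})]$), so one must check that the zeros and poles at $U=1$ cancel to leave a well-defined rational function of $T$. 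This cancellation is exactly the content guaranteed by Theorem \ref{TheoremC} via the work of Campillo, Delgado and Gusein-Zade, so the argument reduces to verifying that the specialization indeed equals the topological zeta function $\chi(Z(T,\mathcal{O}))$ and hence the monodromy zeta function of $f$.
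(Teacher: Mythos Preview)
Your proposal is correct and follows essentially the same route as the paper: identify $\dim_k(\mathcal{O}/\underline{z}\mathcal{O})=\|\underline{v}(\underline{z})\|$, group principal ideals by $\mathcal{I}_{\underline{n}}$ to match $Z_{\mathrm{Ca}(Y)}(q^{-1}T,q,\widehat{O}_{P,Y})$ with the point-count specialization of $Z(T,\mathcal{O})$, invoke Lemma \ref{TheoremB} for the universal series, and then let $q\to 1$ and apply Theorem \ref{TheoremC}. The only cosmetic difference is that the paper justifies $[\mathcal{I}_{\underline{n}}]\mapsto\#(\mathcal{I}_{\underline{n}})$ by checking that the specialization of the formula in Proposition \ref{lema8} reproduces \cite[Lemma 5.4]{Z1}, whereas you appeal directly to the fact that $\mathcal{I}_{\underline{n}}$ carries a variety structure via Lemma \ref{lema3}; both are valid.
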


\begin{proof}
1) Let $I=\left(  z_{1},\ldots,z_{d}\right)  \widehat{O}_{P,Y}\subseteq
\widehat{O}_{P,Y}$ be a principal ideal with
\[
\underline{n}=\left(  v_{1}\left(  z_{1}\right)  ,\ldots,v_{d}\left(
z_{d}\right)  \right)  .
\]
Since $\dim_{k}\left(  \widehat{O}_{P,Y}/I\right)  =\left\Vert \underline
{n}\right\Vert $, and the number of ideals with `codimension $\underline{n}$'
is finite -this number is denoted as $\#\left(  \mathcal{I}_{\underline{n}%
}\right)  $-, we have
\begin{equation}
Z_{\mathrm{Ca}(Y)}\left(  q^{-1}T,q,\widehat{O}_{P,Y}\right)  =\sum
_{\underline{n}\in S(\widehat{O}_{P,Y})}\#\left(  \mathcal{I}_{\underline{n}%
}\right)  q^{-||\underline{n}||}T^{||\underline{n}||}.\label{eqn:unaT}%
\end{equation}
On the other hand, by specializing $\left[  \cdot\right]  $ to $\#\left(
\cdot\right)  $ and by using the formula \ for $\left[  \mathcal{I}%
_{\underline{n}}\right]  $ given in Proposition \ref{lema8}, we obtain the
explicit formula given for $\#\left(  \mathcal{I}_{\underline{n}}\right)  $ in
\cite[Lemma 5.4 ]{Z1}, hence%
\begin{align*}
Z_{\mathrm{Ca}(Y)}\left(  q^{-1}T,q,\widehat{O}_{P,Y}\right)   &  =\#\left(
Z\left(  T_{1},\ldots,T_{d},\widehat{O}_{P,Y}\right)  \right)  \\
&  =Z\left(  T,\ldots,T,\widehat{O}_{P,Y}\right)  \mid_{\mathbb{L}\rightarrow
q}\\
&  =\mathcal{Z}\left(  T_{1},\ldots,T_{d},U,S\right)  \mid_{%
\begin{array}
[c]{l}%
T_{1}=\ldots=T_{d}=T\\
U=q
\end{array}
,}%
\end{align*}
where in the last equality we used Lemma \ref{TheoremB}.

2) From the first part and by using Theorem \ref{TheoremC}, we have%
\begin{align*}
Z_{Ca(Y)}\left(  q^{-1}T,q,\widehat{O}_{P,Y}\right)   &  \mid_{q\rightarrow
1}=\mathcal{Z}\left(  T_{1},\ldots,T_{d},U,S\right)  \mid_{%
\begin{array}
[c]{l}%
T_{1}=\ldots=T_{d}=T\\
U=1
\end{array}
}\\
&  =\varsigma_{f}\left(  T\right)  .
\end{align*}

\end{proof}

\section{\label{functionalequations}Functional Equations}

In this section $k$ is a field of characteristic $p\geq0$, and $\mathcal{O}$
is a Gorenstein and totally rational ring. Let $S=S(\mathcal{O})$. We give
functional equations for $Z(T_{1}, \ldots, T_{d}, \mathcal{O})$,
$\mathcal{Z}(T_{1},\ldots,T_{d},U,S)$ and for other Poincar\'e series.
\medskip

Recall that for any $\underline{n}\in\mathbb{Z}^{d}$, we have $l(\underline
{n})=\dim_{k}\left(  \mathcal{O}/J_{\underline{n}}(\mathcal{O})\right)  $,
with $J_{\underline{n}}=\{\underline{z}\in\mathcal{O}\mid\underline
{v}(\underline{z})\geq\underline{n}\}$ (cf. Section \ref{section:zetafunction}%
). In addition we have:
\begin{equation}
l(\underline{n})=l(\underline{n}-\underline{e}_{i})+\dim_{k}\left(
J_{\underline{n}-\underline{e}_{i}}(\mathcal{O})/J_{\underline{n}}%
(\mathcal{O})\right)  ~~~~~~\text{for~all}\mathrm{~}\underline{n}\in
\mathbb{Z}^{d}. \label{eqn:elece}%
\end{equation}

The following result can be found in \cite[Theorem (3.6)]{CDK}:

\begin{lemma}
[Campillo-Delgado-Kiyek]\label{lemma:kiyek} For any $\underline{n}
\in\mathbb{Z}^{d}$ and any $i \in\{1, \ldots,d \}$ we have
\[
\dim_{k} \left(  J_{\underline{n}}(\mathcal{O})/J_{\underline{n}+\underline
{e}_{i}}(\mathcal{O})\right)  + \dim_{k} \left(  J_{\underline{c} -
\underline{n} - \underline{e}_{i}}(\mathcal{O})/J_{\underline{c}-\underline
{n}}(\mathcal{O}) \right)  =1.
\]

\end{lemma}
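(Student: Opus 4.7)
The plan is to reduce the statement to a symmetry/orthogonality property of the filtration by fractional ideals, and then deduce this from Gorenstein duality on the one-dimensional Cohen-Macaulay ring $\mathcal{O}$.

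First I would observe that both summands on the left-hand side are a priori either $0$ or $1$. Indeed, the $i$-th leading-coefficient map gives an injection
\[
J_{\underline{n}}(\mathcal{O})/J_{\underline{n}+\underline{e}_{i}}(\mathcal{O}) \hookrightarrow t_{i}^{n_{i}}\widetilde{\mathcal{O}}_{i}/t_{i}^{n_{i}+1}\widetilde{\mathcal{O}}_{i}\cong k,
\]
because any two elements of $J_{\underline{n}}(\mathcal{O})$ with the same leading coefficient in the $i$-th branch differ by an element whose other branches already have valuations $\geq n_{j}$ and whose $i$-th valuation has jumped by $1$. So proving the identity amounts to proving the dichotomy that exactly one of the two quotients is nonzero, i.e., the existence of an element $\underline{z}\in\mathcal{O}$ with $v_{i}(z_{i})=n_{i}$ and $v_{j}(z_{j})\geq n_{j}$ for $j\neq i$ is equivalent to the nonexistence of any $\underline{w}\in\mathcal{O}$ with $v_{i}(w_{i})=c_{i}-n_{i}-1$ and $v_{j}(w_{j})\geq c_{j}-n_{j}$ for $j\neq i$.

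Second, I would use the Gorenstein hypothesis through the trace pairing. Because $\mathcal{O}$ is Gorenstein, the conductor $\mathcal{F}$ is its dualizing ideal, which provides an $\mathcal{O}$-linear trace $\mathrm{Tr}\colon \widetilde{\mathcal{O}}/\mathcal{F}\to k$ whose associated symmetric pairing $(x,y)\mapsto \mathrm{Tr}(xy)$ is nondegenerate on the finite-dimensional $k$-space $\widetilde{\mathcal{O}}/\mathcal{F}$, and for which $\mathcal{O}/\mathcal{F}$ coincides with its own orthogonal complement; this is the classical reformulation of the Gorenstein condition for reduced curve singularities (Serre, Kunz). Choosing $\mathrm{Tr}$ branchwise as the residue at $t_{i}^{c_{i}-1}$, one checks by direct computation on the normalization that $(t^{\underline{m}}\widetilde{\mathcal{O}}/\mathcal{F})^{\perp}=t^{\underline{c}-\underline{m}}\widetilde{\mathcal{O}}/\mathcal{F}$ for $\underline{0}\leq \underline{m}\leq \underline{c}$.

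Third, I would apply this pairing to the $\mathcal{O}$-submodules $M_{\underline{n}}:=J_{\underline{n}}(\mathcal{O})+\mathcal{F}$ of $\widetilde{\mathcal{O}}$. The crucial point to verify is the duality
\[
M_{\underline{n}}^{\perp}=J_{\underline{c}-\underline{n}}(\widetilde{\mathcal{O}})\cap \mathcal{O}+\mathcal{F}=M_{\underline{c}-\underline{n}},
\]
obtained by combining the explicit branch computation of step two with the self-duality of $\mathcal{O}/\mathcal{F}$. Taking dimensions of the short exact sequence
\[
0\to M_{\underline{n}+\underline{e}_{i}}/\mathcal{F}\to M_{\underline{n}}/\mathcal{F}\to J_{\underline{n}}(\mathcal{O})/J_{\underline{n}+\underline{e}_{i}}(\mathcal{O})\to 0
\]
and its dual (with $\underline{n}$ replaced by $\underline{c}-\underline{n}-\underline{e}_{i}$), the nondegeneracy of the pairing forces $\dim_{k}(M_{\underline{n}}/M_{\underline{n}+\underline{e}_{i}})+\dim_{k}(M_{\underline{c}-\underline{n}-\underline{e}_{i}}/M_{\underline{c}-\underline{n}})=1$, which is the desired identity. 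The main obstacle is the verification of the duality $M_{\underline{n}}^{\perp}=M_{\underline{c}-\underline{n}}$: one must carefully handle the fact that $J_{\underline{n}}(\mathcal{O})$ is defined by intersection with $\mathcal{O}$, whereas the orthogonality computation is most natural for the $\widetilde{\mathcal{O}}$-ideals $t^{\underline{n}}\widetilde{\mathcal{O}}$; bridging this gap is exactly the content of the Gorenstein symmetry.
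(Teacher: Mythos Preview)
The paper does not prove this lemma; it is quoted from \cite[Theorem~(3.6)]{CDK}. Your strategy---encode the Gorenstein property as self-orthogonality of $\mathcal{O}/\mathcal{F}$ inside $\widetilde{\mathcal{O}}/\mathcal{F}$ under a trace pairing and then read off the symmetry of the filtration---is the correct one and is essentially how Campillo--Delgado--Kiyek argue.

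Your third step, however, contains a concrete error. Since $M_{\underline{n}}/\mathcal{F}=(t^{\underline{n}}\widetilde{\mathcal{O}}/\mathcal{F})\cap(\mathcal{O}/\mathcal{F})$ is an intersection, its orthogonal under a nondegenerate pairing is the \emph{sum}
\[
(M_{\underline{n}}/\mathcal{F})^{\perp}=\bigl(t^{\underline{c}-\underline{n}}\widetilde{\mathcal{O}}+\mathcal{O}\bigr)/\mathcal{F},
\]
and not $M_{\underline{c}-\underline{n}}/\mathcal{F}=(t^{\underline{c}-\underline{n}}\widetilde{\mathcal{O}}\cap\mathcal{O})/\mathcal{F}$; already at $\underline{n}=\underline{0}$ the dimensions $\dim(M_{\underline{0}}/\mathcal{F})+\dim(M_{\underline{c}}/\mathcal{F})=\delta+0$ do not add up to $\|\underline{c}\|$, so your final deduction cannot go through as written. (A related slip: the explicit trace you propose, the plain sum of the $t_{i}^{c_{i}-1}$-coefficients, need not annihilate $\mathcal{O}$; in the situation of Example~\ref{ej2} one has $(t_{1},t_{2})\in\mathcal{O}$ yet $\mathrm{Tr}(t_{1},t_{2})=2$. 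The Gorenstein hypothesis only guarantees that \emph{some} linear form on $\widetilde{\mathcal{O}}/\mathcal{F}$, arising from a generator of the dualizing module, induces a nondegenerate pairing for which $\mathcal{O}/\mathcal{F}$ is Lagrangian and $(t^{\underline{m}}\widetilde{\mathcal{O}}/\mathcal{F})^{\perp}=t^{\underline{c}-\underline{m}}\widetilde{\mathcal{O}}/\mathcal{F}$.) The fix is short: with the correct orthogonal, the identity $\dim(M_{\underline{n}}/\mathcal{F})+\dim(M_{\underline{n}}/\mathcal{F})^{\perp}=\|\underline{c}\|$ reads
\[
\bigl(\|\underline{c}\|-\delta-l(\underline{n})\bigr)+\bigl(\|\underline{n}\|+l(\underline{c}-\underline{n})\bigr)=\|\underline{c}\|,
\]
that is, $l(\underline{c}-\underline{n})-l(\underline{n})=\delta-\|\underline{n}\|$, which is precisely Lemma~\ref{eles}. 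Subtracting its instances at $\underline{n}$ and at $\underline{n}+\underline{e}_{i}$ then yields the desired dichotomy.
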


The following result will be used in the proof of the functional equation:

\begin{lemma}
\label{eles}
\begin{equation}
l(\underline{c}-\underline{n})-l(\underline{n})=\delta-||\underline{n}||,~
~\underline{n}\in\mathbb{Z}^{d}.\nonumber
\end{equation}

\end{lemma}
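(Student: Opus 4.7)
Set $g(\underline{n}) := l(\underline{c}-\underline{n}) - l(\underline{n})$; the target identity becomes $g(\underline{n}) = \delta - \|\underline{n}\|$. The plan is to establish two things: (i) the discrete difference relation $g(\underline{n}+\underline{e}_i) - g(\underline{n}) = -1$ for every $\underline{n} \in \mathbb{Z}^d$ and every $i \in \{1,\ldots,d\}$, and (ii) the base value $g(\underline{0}) = \delta$. These together yield the identity by a finite telescoping along any lattice path from $\underline{0}$ to $\underline{n}$, which works for both positive and negative components.

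For (i), I would apply the recursion (\ref{eqn:elece}) in two instances. As stated, it gives $l(\underline{n}+\underline{e}_i) - l(\underline{n}) = \dim_k\!\left(J_{\underline{n}}(\mathcal{O})/J_{\underline{n}+\underline{e}_i}(\mathcal{O})\right)$; applied with $\underline{n}$ replaced by $\underline{c}-\underline{n}$, it gives $l(\underline{c}-\underline{n}) - l(\underline{c}-\underline{n}-\underline{e}_i) = \dim_k\!\left(J_{\underline{c}-\underline{n}-\underline{e}_i}(\mathcal{O})/J_{\underline{c}-\underline{n}}(\mathcal{O})\right)$. Summing these two equalities and invoking Lemma \ref{lemma:kiyek} (which says the two right-hand sides add up to $1$) yields
\[
\bigl[l(\underline{n}+\underline{e}_i) - l(\underline{n})\bigr] + \bigl[l(\underline{c}-\underline{n}) - l(\underline{c}-\underline{n}-\underline{e}_i)\bigr] = 1,
\]
which is exactly $g(\underline{n}+\underline{e}_i) - g(\underline{n}) = -1$.

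For (ii), I would compute $l(\underline{0}) = \dim_k(\mathcal{O}/\mathcal{O}) = 0$ and observe that $J_{\underline{c}}(\mathcal{O}) = \mathcal{F}$, so $g(\underline{0}) = l(\underline{c}) = \dim_k(\mathcal{O}/\mathcal{F})$. The short exact sequence $0 \to \mathcal{O}/\mathcal{F} \to \widetilde{\mathcal{O}}/\mathcal{F} \to \widetilde{\mathcal{O}}/\mathcal{O} \to 0$ already used in the proof of Lemma \ref{lema2} gives $\dim_k(\mathcal{O}/\mathcal{F}) = \|\underline{c}\| - \delta$, and the Gorenstein hypothesis $\|\underline{c}\| = 2\delta$ (recalled in Section \ref{semigroup}) then forces $g(\underline{0}) = \delta$.

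There is no real obstacle in this argument beyond careful bookkeeping; the essential inputs are the Campillo-Delgado-Kiyek lemma, the defining recursion (\ref{eqn:elece}), and the Gorenstein identity $\|\underline{c}\| = 2\delta$. The only delicate point is that both (\ref{eqn:elece}) and Lemma \ref{lemma:kiyek} are used at multi-indices lying anywhere in $\mathbb{Z}^d$ (not merely in $S$), which is precisely the generality in which they are stated, so the telescoping is valid on the whole lattice.
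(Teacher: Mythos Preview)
Your proof is correct and follows essentially the same approach as the paper's: both use the recursion (\ref{eqn:elece}) and Lemma \ref{lemma:kiyek} to obtain the one-step relation $g(\underline{n}+\underline{e}_i)=g(\underline{n})-1$, anchor it at $\underline{n}=\underline{0}$ via the Gorenstein identity $\|\underline{c}\|=2\delta$, and then propagate to all of $\mathbb{Z}^d$. The only cosmetic difference is that the paper phrases the propagation as an induction on $\sum_i |m_i|$ rather than as a telescoping sum, and asserts $l(\underline{c})=\delta$ directly where you spell out the short exact sequence argument.
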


\begin{proof}
We use induction on $||\underline{m}||:=\sum_{i=1}^{d}|m_{i}|$, where
$\underline{m}=(m_{1},\ldots,m_{d})\in\mathbb{Z}^{d}$. For $||\underline
{m}||=0$ we have $\underline{m}=\underline{0}$. In this case $l(\underline
{0})=0$ and $l(\underline{c})=\delta$, and the result is true. Assume, as
induction hypothesis, that the result is true for every $\underline{m}%
\in\mathbb{Z}^{d}$ with $||\underline{m}||\leq k$ for some $k\geq1$. From the
induction hypothesis, we have the following two formulas: (i) if
$0<||\underline{m}||\leq k$ and $m_{i}\geq1$ for some $i\in\{1,\ldots,d\}$,
then for $\underline{m}-\underline{e}_{i}$,
\begin{equation}
l(\underline{c}-(\underline{m}-\underline{e}_{i}))-l(\underline{m}%
-\underline{e}_{i})=\delta-||\underline{m}-\underline{e}_{i}||.
\label{hipotesis}%
\end{equation}
(ii) If $\ 0<||\underline{m}||\leq k$ and $m_{i}\leq0$, then for some
$i\in\{1,\ldots,d\}$, $m_{i}<0$. Then for $\underline{m}+\underline{e}_{i}$,
\begin{equation}
l(\underline{c}-(\underline{m}+\underline{e}_{i}))-l(\underline{m}%
+\underline{e}_{i})=\delta-||\underline{m}+\underline{e}_{i}||.
\label{hipotesis2}%
\end{equation}

We now verify the validity of the result for $||\underline{m}||=k+1$. If
$m_{i}\geq1$ for some $i\in\{1,\ldots,d\}$, by applying (\ref{eqn:elece})
\begin{equation}
l(\underline{c}-\underline{m})-l(\underline{m})=l(\underline{c}-\underline
{m})-l(\underline{m}-\underline{e}_{i})-\dim_{k}\left(  J_{\underline
{m}-\underline{e}_{i}}(\mathcal{O})/J_{\underline{m}}(\mathcal{O})\right)
,\nonumber
\end{equation}
we now use Lemma \ref{lemma:kiyek} and (\ref{eqn:elece}) to get
\begin{align}
l(\underline{c}-\underline{m})-l(\underline{m})  &  =l(\underline
{c}-\underline{m})-l(\underline{m}-\underline{e}_{i})-\left(  1-\dim
_{k}\left(  J_{\underline{c}-\underline{m}}(\mathcal{O})/J_{\underline
{c}-\underline{m}+\underline{e}_{i}}(\mathcal{O})\right)  \right) \nonumber\\
&  =l(\underline{c}-\underline{m})+\dim_{k}\left(  J_{\underline{c}%
-\underline{m}}(\mathcal{O})/J_{\underline{c}-\underline{m}+\underline{e}_{i}%
}(\mathcal{O})\right)  -l(\underline{m}-\underline{e}_{i})-1\nonumber\\
&  =l(\underline{c}-(\underline{m}-\underline{e}_{i}))-l(\underline
{m}-\underline{e}_{i})-1.\nonumber
\end{align}

Finally, by applying induction hypothesis (\ref{hipotesis}) we get
\begin{equation}
l(\underline{c}-\underline{m})-l(\underline{m})=\delta-||\underline
{m}||.\nonumber
\end{equation}

In the case in which $m_{i}<0$, for some $i\in\{1,\ldots,d\}$, we apply the
previous reasoning and induction hypothesis (\ref{hipotesis2}) to get
\[
l(\underline{c}-\underline{m})-l(\underline{m})=\delta-||\underline{m}||.
\]

\end{proof}

\begin{remark}
We note that $\mathcal{I}_{\underline{n}}=\emptyset$ whenever $\underline
{n}\notin S$, thus, $\left[  \mathcal{I}_{\underline{n}}\right]  =0$ if
$\underline{n}\notin S$. We can write $Z(T_{1},\ldots,T_{d},\mathcal{O})$ as
follows:
\[
Z\left(  T_{1},\ldots,T_{d},\mathcal{O}\right)  ={\textstyle\sum
\nolimits_{\underline{n}\in\mathbb{Z}^{d}}}\left[  \mathcal{I}_{\underline{n}%
}\right]  \mathbb{L}^{-\left\Vert \underline{n}\right\Vert }T^{\underline{n}%
}.
\]

\end{remark}

\begin{theorem}
\label{TheoremE} Let $\mathcal{O}$ be a Gorenstein and totally rational ring.
Assume that $\mathcal{J}\cong\left(  G_{m}\right)  ^{d-1}\times\left(
G_{a}\right)  ^{\delta-d+1}$, then
\[
\noindent(1)\text{ }Z(\mathbb{L}T_{1},\ldots,\mathbb{L}T_{d},\mathcal{O}%
)=\mathbb{L}^{\delta-d}\cdot T^{\underline{c}-\underline{1}}\cdot\frac
{\prod_{i=1}^{d}(1-\mathbb{L}T_{i})}{\prod_{i=1}^{d}(T_{i}-1)}\cdot
Z(T_{1}^{-1},\ldots,T_{d}^{-1},\mathcal{O});
\]%
\[
\noindent(2)\text{ }\mathcal{Z}(UT_{1},\ldots,UT_{d},U,S)=U^{\delta-d}\cdot
T^{\underline{c}-\underline{1}}\cdot\frac{\prod_{i=1}^{d}(1-UT_{i})}%
{\prod_{i=1}^{d}(T_{i}-1)}\cdot\mathcal{Z}(T_{1}^{-1},\ldots,T_{d}^{-1},U,S).
\]

\end{theorem}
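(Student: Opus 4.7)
The plan is to prove (1) by combining Proposition~\ref{lema8} with the Gorenstein duality of Lemma~\ref{eles} at the level of generating functions, and then to observe that every step transfers formally to give (2).

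First I would extend Proposition~\ref{lema8} to all $\underline{n}\in\mathbb{Z}^{d}$ (with $[\mathcal{I}_{\underline{n}}]=0$ when $\underline{n}\notin S$) by showing that the alternating sum
$$a_{\underline{n}}:=\sum_{I\subseteq I_{0}}(-1)^{\#I}\mathbb{L}^{-l(\underline{n}+\underline{1}_{I})}$$
vanishes for $\underline{n}\notin S$. By Remark~\ref{nota1} there is an index $i_{0}$ with $J_{\underline{n}}(\mathcal{O})=J_{\underline{n}+\underline{e}_{i_{0}}}(\mathcal{O})$; from the definition of $J_{\underline{m}}(\mathcal{O})$ this equality propagates to $l(\underline{n}+\underline{1}_{I})=l(\underline{n}+\underline{1}_{I}+\underline{e}_{i_{0}})$ for every $I\not\ni i_{0}$, so pairing $I$ with $I\cup\{i_{0}\}$ makes $a_{\underline{n}}$ telescope to zero. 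Setting $A(T):=\sum_{\underline{n}\in\mathbb{Z}^{d}}a_{\underline{n}}T^{\underline{n}}$, this yields $Z(\mathbb{L}T_{1},\ldots,\mathbb{L}T_{d},\mathcal{O})=\frac{\mathbb{L}}{\mathbb{L}-1}A(T)$, and the change of index $\underline{m}=\underline{n}+\underline{1}_{I}$ gives the formal factorisation $A(T)=\prod_{i=1}^{d}(1-T_{i}^{-1})\cdot L(T)$ with $L(T):=\sum_{\underline{m}\in\mathbb{Z}^{d}}\mathbb{L}^{-l(\underline{m})}T^{\underline{m}}$.

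The heart of the argument is then to apply Lemma~\ref{eles} inside $a_{\underline{c}-\underline{n}-\underline{1}}$: substituting $l(\underline{c}-\underline{m})=l(\underline{m})+\delta-\|\underline{m}\|$ and exchanging $I\leftrightarrow I_{0}\setminus I$ to collect signs and powers of $\mathbb{L}$ yields
$$T^{\underline{c}-\underline{1}}A(T^{-1})=(-1)^{d}\mathbb{L}^{-\delta}B(\mathbb{L}T),\qquad B(T):=\prod_{i=1}^{d}(1-\mathbb{L}T_{i}^{-1})L(T).$$
Comparing the factorisations of $A$ and $B$ through $L(T)$ gives the rational identity $\prod_{i}(T_{i}-1)B(T)=\prod_{i}(T_{i}-\mathbb{L})A(T)$. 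Feeding this back into the previous display and replacing $A$ by $\frac{\mathbb{L}-1}{\mathbb{L}}Z$ produces exactly the functional equation (1), after the elementary simplification $(-1)^{d}\prod(\mathbb{L}T_{i}-1)=\prod(1-\mathbb{L}T_{i})$. Part (2) is then immediate: every step depends only on $S$, $\underline{c}$, $\delta$ together with the identifications $[\mathcal{J}]=(\mathbb{L}-1)^{d-1}\mathbb{L}^{\delta-d+1}$, $[\pi_{\underline{c}-\underline{1}}(\mathcal{O}^{\times})]=(\mathbb{L}-1)\mathbb{L}^{\|\underline{c}\|-\delta-1}$ and $[\mathcal{I}_{\underline{n}}]=\mathcal{I}_{\underline{n}}(\mathbb{L})$, so replacing $\mathbb{L}$ by the indeterminate $U$ throughout recovers the functional equation for $\mathcal{Z}$.

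The main obstacle is that $L(T)$ is a genuinely divergent formal Laurent series, so the two factorisations $A(T)=\prod(1-T_{i}^{-1})L(T)$ and $B(T)=\prod(1-\mathbb{L}T_{i}^{-1})L(T)$ have to be interpreted with care. I would dispose of this either by clearing denominators---multiplying both factorisations by $\prod_{i}T_{i}$ so that only honest rational identities between $A(T)$ and $B(T)$ are compared---or by verifying the key identity $\prod_{i}(T_{i}-1)B(T)=\prod_{i}(T_{i}-\mathbb{L})A(T)$ directly at the level of coefficients, bypassing $L(T)$ altogether.
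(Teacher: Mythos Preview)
Your proof is correct and follows the paper's approach: both expand $\prod_i(T_i-1)\cdot Z(\mathbb{L}T_1,\ldots,\mathbb{L}T_d,\mathcal{O})$ via Proposition~\ref{lema8}, apply Lemma~\ref{eles} to each exponent $l(\underline{n}+\underline{1}_I-\underline{1}_J)$, and re-collect the result as $[\mathcal{I}_{\underline{c}-\underline{n}-\underline{1}_I}]$ to recognise the other side; your auxiliary series $L(T)$ is a repackaging of the paper's direct coefficient manipulation, and your coefficient-level workaround for its divergence reduces exactly to that manipulation (note that since you only multiply $L$ by Laurent \emph{polynomials}, every coefficient in sight is already a finite sum, so no genuine divergence arises). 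One point where you are in fact more careful than the paper: the paper silently substitutes the right-hand side of Proposition~\ref{lema8} for $[\mathcal{I}_{\underline{n}-\underline{1}_J}]$ for \emph{all} $\underline{n}-\underline{1}_J\in\mathbb{Z}^d$, which presupposes precisely your pairing argument showing $a_{\underline{n}}=0$ whenever $\underline{n}\notin S$.
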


\begin{proof}
(1)~We first note that
\begin{align*}
\left(  \prod_{i=1}^{d}(T_{i}-1)\right)  Z\left(  \mathbb{L}T_{1}%
,\ldots,\mathbb{L}T_{d},\mathcal{O}\right)   &  =\left(  \prod_{i=1}^{d}%
(T_{i}-1)\right)  \sum_{\underline{n}\in\mathbb{Z}^{d}}\left[  \mathcal{I}%
_{\underline{n}}\right]  T^{\underline{n}}\\
&  =\sum_{\underline{n}\in\mathbb{Z}^{d}}\sum_{J\subseteq I_{0}}%
(-1)^{d-\#J}\left[  \mathcal{I}_{\underline{n}}\right]  T^{\underline
{n}+\underline{1}_{J}}\\
&  =\sum_{\underline{n}\in\mathbb{Z}^{d}}\sum_{J\subseteq I_{0}}%
(-1)^{d-\#J}\left[  \mathcal{I}_{\underline{n}-\underline{1}_{J}}\right]
T^{\underline{n}},
\end{align*}
where $I_{0}=\left\{  1,2,\ldots,d\right\}  $ and for $J\subseteq I_{0}$,
$\underline{1}_{J}$ is the element of $\mathbb{N}^{d}$ whose $i$-th component
is equal to $1$ or $0$, accordingly if $i\in J$, or if $i\notin J$,
respectively. If $\underline{n}-\underline{1}_{J}\notin S$, then $\left[
\mathcal{I}_{\underline{n}-\underline{1}_{J}}\right]  =0$; if $\underline
{n}-\underline{1}_{J}\in S$, then by applying Proposition \ref{lema8},
\begin{align*}
&  \left(  \prod_{i=1}^{d}(T_{i}-1)\right)  Z\left(  \mathbb{L}T_{1}%
,\ldots,\mathbb{L}T_{d},\mathcal{O}\right) \\
&  =\frac{\mathbb{L}}{\mathbb{L}-1}\sum_{\underline{n}\in\mathbb{Z}^{d}}%
\sum_{J\subseteq I_{0}}(-1)^{d-\#J}\sum_{I\subseteq I_{0}}(-1)^{\#I}%
\mathbb{L}^{||\underline{n}-\underline{1}_{J}||-l(\underline{n}+\underline
{1}_{I}-\underline{1}_{J})}T^{\underline{n}}.
\end{align*}
Taking into account that $\mathcal{O}$ is Gorenstein, i.e. $||\underline
{c}||=2\delta$, and applying Lemma \ref{eles},
\[
l(\underline{n}+\underline{1}_{I}-\underline{1}_{J})=||\underline
{n}+\underline{1}_{I}-\underline{1}_{J}||+l(\underline{c}-\underline
{n}-\underline{1}_{I}+\underline{1}_{J})-\delta,
\]
and $\left(  \prod_{i=1}^{d}(T_{i}-1)\right)  Z\left(  \mathbb{L}T_{1}%
,\ldots,\mathbb{L}T_{d},\mathcal{O}\right)  $ becomes%
\begin{align*}
&  \frac{\mathbb{L}}{\mathbb{L}-1}\sum_{\underline{n}\in\mathbb{Z}^{d}}%
\sum_{J\subseteq I_{0}}(-1)^{d-\#J}\sum_{I\subseteq I_{0}}(-1)^{\#I}%
\mathbb{L}^{||\underline{n}-\underline{1}_{J}||-||\underline{n}+\underline
{1}_{I}-\underline{1}_{J}||-l(\underline{c}-\underline{n}-\underline{1}%
_{I}+\underline{1}_{J})+\delta}T^{\underline{n}}\\
&  =\frac{\mathbb{L}}{\mathbb{L}-1}\sum_{\underline{n}\in\mathbb{Z}^{d}}%
\sum_{I\subseteq I_{0}}\sum_{J\subseteq I_{0}}(-1)^{d-\#J+\#I}\mathbb{L}%
^{-\delta+||\underline{n}||}\mathbb{L}^{||\underline{c}-\underline
{n}-\underline{1}_{I}||-l(\underline{c}-\underline{n}-\underline{1}%
_{I}+\underline{1}_{J})}T^{\underline{n}}\\
&  =\sum_{\underline{n}\in\mathbb{Z}^{d}}\sum_{I\subseteq I_{0}}%
(-1)^{d+\#I}\mathbb{L}^{-\delta+||\underline{n}||}\left(  \frac{\mathbb{L}%
}{\mathbb{L}-1}\sum_{J\subseteq I_{0}}(-1)^{\#J}\mathbb{L}^{||\underline
{c}-\underline{n}-\underline{1}_{I}||-l(\underline{c}-\underline{n}%
-\underline{1}_{I}+\underline{1}_{J})}\right)  T^{\underline{n}}\\
&  =\sum_{\underline{n}\in\mathbb{Z}^{d}}\mathbb{L}^{-\delta+||\underline
{n}||}\sum_{I\subseteq I_{0}}(-1)^{d-\#I}\left[  \mathcal{I}_{\underline
{c}-\underline{n}-\underline{1}_{I}}\right]  T^{\underline{n}}%
\end{align*}

\begin{align*}
&  =\sum_{\underline{n}\in\mathbb{Z}^{d}}\sum_{I\subseteq I_{0}}%
(-1)^{d-\#I}\left[  \mathcal{I}_{\underline{c}-\underline{n}-\underline{1}%
_{I}}\right]  \mathbb{L}^{-\delta+||\underline{n}||}T^{\underline{n}}\\
&  =\sum_{\underline{n}\in\mathbb{Z}^{d}}\sum_{I\subseteq I_{0}}%
(-1)^{d-\#I}\left[  \mathcal{I}_{\underline{c}-\underline{n}-\underline{1}%
_{I}}\right]  \mathbb{L}^{\delta-||\underline{c}-\underline{n}||}%
T^{\underline{n}}\\
&  =\sum_{\underline{m}\in\mathbb{Z}^{d}}\sum_{I\subseteq I_{0}}%
(-1)^{d-\#I}\left[  \mathcal{I}_{\underline{m}-\underline{1}_{I}}\right]
\mathbb{L}^{\delta-||\underline{m}||}T^{\underline{c}-\underline{m}}\\
&  =\mathbb{L}^{\delta}T^{\underline{c}}\sum_{\underline{m}\in\mathbb{Z}^{d}%
}\sum_{I\subseteq I_{0}}(-1)^{d-\#I}\left[  \mathcal{I}_{\underline
{m}-\underline{1}_{I}}\right]  (\mathbb{L}T_{1})^{-m_{1}}\cdot\ldots
\cdot(\mathbb{L}T_{d})^{-m_{d}}\\
&  =\mathbb{L}^{\delta}T^{\underline{c}}\left(  \prod_{i=1}^{d}\left(  \left(
\mathbb{L}T_{i}\right)  ^{-1}-1\right)  \right)  \sum_{\underline{m}%
\in\mathbb{Z}^{d}}\left[  \mathcal{I}_{\underline{m}}\right]  (\mathbb{L}%
T_{1})^{-m_{1}}\cdot\ldots\cdot(\mathbb{L}T_{d})^{-m_{d}}\\
&  =\mathbb{L}^{\delta-d}T^{\underline{c}-\underline{1}}\left(  \prod
_{i=1}^{d}\left(  1-\mathbb{L}T_{i}\right)  \right)  Z\left(  T_{1}%
^{-1},\ldots,T_{d}^{-1},\mathcal{O}\right)  .
\end{align*}
(2)~ The functional equation for $\mathcal{Z}(T_{1},\ldots,T_{d},U,S)$ follows
from the first part by Definition \ref{universalzetafunction} and Theorem
\ref{teo1}.
\end{proof}

It is worth mentioning that, since we have \emph{not} shown that
\[
{\sum\nolimits_{\underline{n}\in\mathbb{Z}^{d}}}\mathcal{I}_{\underline{n}%
}\left(  U\right)  U^{-\left\Vert \underline{n}\right\Vert }T^{\underline{n}%
}=\mathcal{Z}(T_{1},\ldots,T_{d},U,S),
\]
it is necessary to show first the functional equation for $Z\left(
T_{1},\ldots,T_{d},\mathcal{O}\right)  $.

\begin{corollary}
If $%
{\textstyle\prod\nolimits_{i=1}^{d}}
\left(  1-\mathbb{L}^{-1}T_{i}\right)  Z\left(  T_{1},\ldots,T_{d}%
,\mathcal{O}\right)  =M\left(  T_{1},\ldots,T_{d},\mathcal{O}\right)  $, with
\[
M\left(  T_{1},\ldots,T_{d},\mathcal{O}\right)  =\sum_{\underline{0}%
\leq\underline{i}\leq\underline{c}}a\underline{_{i}}T^{\underline{i}},
\]
then (1) $M\left(  \mathbb{L}T_{1},\ldots,\mathbb{L}T_{d},\mathcal{O}\right)
=\mathbb{L}^{\delta}T^{\underline{c}}M\left(  T_{1}^{-1},\ldots,T_{d}%
^{-1},\mathcal{O}\right)  .$ (2) $a\underline{_{i}}=a_{\underline{c}%
-}\underline{_{i}}\mathbb{L}^{\delta-\left\Vert \underline{i}\right\Vert }$,
for $\underline{0}\leq\underline{i}\leq\underline{c}$. In particular,
$a_{\underline{c}}=\mathbb{L}^{-\delta}$, since $a_{\underline{0}}=1$, and
then the degree of $M\left(  T_{1},\ldots,T_{d},\mathcal{O}\right)  $\ is
$\left\Vert \underline{c}\right\Vert $.
\end{corollary}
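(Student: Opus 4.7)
The plan is to derive both assertions directly from the functional equation in Theorem \ref{TheoremE} (1), by massaging the two sides of the defining relation $\prod_{i=1}^{d}(1-\mathbb{L}^{-1}T_{i})\,Z(T_{1},\ldots,T_{d},\mathcal{O})=M(T_{1},\ldots,T_{d},\mathcal{O})$.

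For (1), I would start from $M(\mathbb{L}T_{1},\ldots,\mathbb{L}T_{d},\mathcal{O})=\prod_{i=1}^{d}(1-T_{i})\,Z(\mathbb{L}T_{1},\ldots,\mathbb{L}T_{d},\mathcal{O})$ and substitute the functional equation for $Z$. The factor $\prod(T_{i}-1)$ in the denominator of the right-hand side of Theorem \ref{TheoremE} (1) cancels (up to a sign $(-1)^{d}$) against $\prod(1-T_{i})$, leaving $(-1)^{d}\mathbb{L}^{\delta-d}T^{\underline{c}-\underline{1}}\prod_{i}(1-\mathbb{L}T_{i})\,Z(T_{1}^{-1},\ldots,T_{d}^{-1},\mathcal{O})$. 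To reintroduce $M$ on the right, I would use the elementary identity $1-\mathbb{L}^{-1}T_{i}^{-1}=-\mathbb{L}^{-1}T_{i}^{-1}(1-\mathbb{L}T_{i})$, so that $\prod_{i}(1-\mathbb{L}T_{i})\,Z(T_{1}^{-1},\ldots,T_{d}^{-1},\mathcal{O})=(-1)^{d}\mathbb{L}^{d}T^{\underline{1}}\,M(T_{1}^{-1},\ldots,T_{d}^{-1},\mathcal{O})$. Multiplying the signs and monomial factors together yields $M(\mathbb{L}T_{1},\ldots,\mathbb{L}T_{d},\mathcal{O})=\mathbb{L}^{\delta}T^{\underline{c}}M(T_{1}^{-1},\ldots,T_{d}^{-1},\mathcal{O})$.

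For (2), I would expand both sides of the identity in (1) in the monomial basis. The left side is $\sum_{\underline{0}\leq\underline{i}\leq\underline{c}}a_{\underline{i}}\mathbb{L}^{\|\underline{i}\|}T^{\underline{i}}$, while the right side, after the change of variable $\underline{j}=\underline{c}-\underline{i}$, becomes $\mathbb{L}^{\delta}\sum_{\underline{0}\leq\underline{j}\leq\underline{c}}a_{\underline{c}-\underline{j}}T^{\underline{j}}$. Comparing coefficients of $T^{\underline{i}}$ gives exactly $a_{\underline{i}}=\mathbb{L}^{\delta-\|\underline{i}\|}a_{\underline{c}-\underline{i}}$. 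Finally, from $Z(T_{1},\ldots,T_{d},\mathcal{O})=\sum_{\underline{n}\in S}[\mathcal{I}_{\underline{n}}]\mathbb{L}^{-\|\underline{n}\|}T^{\underline{n}}$ with $[\mathcal{I}_{\underline{0}}]=\mathbf{1}$, the constant term of $M$ is $a_{\underline{0}}=1$; specializing the coefficient relation at $\underline{i}=\underline{0}$ yields $a_{\underline{c}}=\mathbb{L}^{-\delta}\neq 0$, so $M$ has degree exactly $\|\underline{c}\|$.

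There is really no serious obstacle here: both assertions are formal consequences of Theorem \ref{TheoremE} (1) together with the Cohen--Macaulay-style bookkeeping $M=\prod(1-\mathbb{L}^{-1}T_{i})Z$. The only mild subtlety is tracking the signs and monomial factors correctly when moving $\prod(1-\mathbb{L}T_{i})Z(T_{1}^{-1},\ldots)$ back into the form $M(T_{1}^{-1},\ldots)$, which is settled once one records that $1-\mathbb{L}^{-1}T_{i}^{-1}=-\mathbb{L}^{-1}T_{i}^{-1}(1-\mathbb{L}T_{i})$.
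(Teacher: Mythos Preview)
Your argument is correct and is precisely the intended one: the paper states this result as a corollary of Theorem~\ref{TheoremE} without giving a separate proof, and your derivation is exactly the routine manipulation that justifies it. The sign and monomial bookkeeping via $1-\mathbb{L}^{-1}T_i^{-1}=-\mathbb{L}^{-1}T_i^{-1}(1-\mathbb{L}T_i)$ is handled correctly, and the identification $a_{\underline{0}}=1$ from $[\mathcal{I}_{\underline{0}}]=\boldsymbol{1}$ is the right observation to pin down the degree.
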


\begin{remark}
(1) By specialization several functional equations can be obtained, among
them,
\[
Z(\mathbb{L}T,\mathcal{O})=\mathbb{L}^{\delta-d}\cdot T^{||\underline
{c}-\underline{1}||}\cdot\left(  \frac{1-\mathbb{L}T}{T-1}\right)  ^{d}\cdot
Z(T^{-1},\mathcal{O}).
\]

\noindent(2) By Lemma \ref{rem5} one also obtains the functional equations for
the generalized Poincar\'{e} series (see also \cite[Theorem 5.4.3]{M}):%
\[
P_{g}\left(  \mathbb{L}T_{1},\ldots,\mathbb{L}T_{d}, \mathcal{O}\right)
=\mathbb{L}^{\delta-d}\cdot T^{\underline{c}-\underline{1}}\cdot\frac
{\prod_{i=1}^{d}(1-\mathbb{L}T_{i})}{\prod_{i=1}^{d}(T_{i}-1)}\cdot
P_{g}\left(  T_{1}^{-1},\ldots,T_{d}^{-1}, \mathcal{O}\right)  .
\]

\noindent(3) Let $\mathcal{O}=\mathcal{O}_{\left(  \mathbb{C}^{2},0\right)
}/\left(  f\right)  $, where $f\in\mathcal{O}_{\left(  \mathbb{C}%
^{2},0\right)  }$ is reduced. Then
\[
\varsigma_{f}\left(  T\right)  =(-1)^{d}T^{||\underline{c}-\underline{1}%
||}\cdot\varsigma_{f}\left(  T^{-1}\right)  .
\]

\end{remark}

\section{\label{ejemplos}Examples}

\subsection{Example\label{ej1}}

Set $\mathcal{O}=\mathbb{C} \{\!\{ x,y \}\!\} /\left(  x^{3}-y^{2}\right)  $
and $\widetilde{\mathcal{O}}=\mathbb{C} \{\!\{ t \}\!\} $, then%

\[
\mathcal{O}=\left\{
{\textstyle\sum\nolimits_{i=0}^{\infty}}
a_{i}t^{i}\in\widetilde{\mathcal{O}}\mid a_{1}=0\right\}  =\mathbb{C}%
+t^{2}\mathbb{C} \{\!\{ t \} \!\} ,
\]
and
\[
\mathcal{O}^{\times}=\left\{
{\textstyle\sum\nolimits_{i=0}^{\infty}}
a_{i}t^{i}\in\widetilde{\mathcal{O}}\mid a_{0}\neq0,a_{1}=0\right\}  .
\]
The semigroup of values is the set $\left\{  0\right\}  \cup\left\{
n\in\mathbb{N}\mid n\geqslant2\right\}  $, and $\left[  \pi_{\underline
{c}-\underline{1}}\left(  \mathcal{O}^{\times}\right)  \right]  =\left[
\mathbb{C}^{\times}\times\left\{  \text{point}\right\}  \right]  $
$=\mathbb{L}-1$. The group $\mathcal{J}$ is isomorphic to $\left\{  1+bt\mid
b\in\mathbb{C}\right\}  $, where the product is defined as $\left(
1+b_{0}t\right)  \left(  1+b_{1}t\right)  =1+\left(  b_{0}+b_{1}\right)  t$,
and the identity is $1$. We now compute the zeta function of $Z\left(
T,\mathcal{O}\right)  $. We first note that $\left[  \mathcal{I}_{0}\right]
=\left[  \left\{  \text{point}\right\}  \right]  =1$. To compute $\left[
\mathcal{I}_{k}\right]  $ for $k\geqslant2$, we fix a set of polynomial
representatives $\left\{  \mu\right\}  $ of $\mathcal{J}$ in $\widetilde
{\mathcal{O}}$. If $I=z\mathcal{O}$, $v(z)=k$, then $z=t^{k}\left(
1+bt\right)  v$, with $1+bt\in\mathcal{J}$ and $v\in\mathcal{O}^{\times}$. The
correspondence $z\rightarrow1+bt$ gives a bijection between $\mathcal{I}_{k}$
and $\mathcal{J}$, for $k\geqslant2$, therefore $\left[  \mathcal{I}%
_{k}\right]  =\mathbb{L}$, for $k\geqslant2$, and $Z\left(  T,\mathcal{O}%
\right)  =\frac{1-\mathbb{L}^{-1}T+\mathbb{L}^{-1}T^{2}}{1-\mathbb{L}^{-1}T}$.
Note that each $I$ in $\mathcal{I}_{k}$\ corresponds to a $\mu\in\mathcal{J}$
such that $t^{k}\mu\in\mathcal{O}$.

By specializing $\left[  \cdot\right]  $ to the Euler characteristic
$\chi\left(  \cdot\right)  $, we obtain $\chi\left(  Z\left(  T,\mathcal{O}%
\right)  \right)  $ $=\frac{1-T+T^{2}}{1-T}=\varsigma_{f}\left(  T\right)  $.
By applying a theorem of A'Campo (see \cite{A}) it is possible to verify that
$\chi\left(  Z\left(  T,\mathcal{O}\right)  \right)  $ is the zeta function of
the monodromy at the origin of the mapping $f:\mathbb{C}^{2}\rightarrow
\mathbb{C}$, where $\ f\left(  x,y\right)  =x^{3}-y^{2}$.

Let $\mathbb{F}_{q}$ be a finite field with $q$ elements. Let us consider the
local ring $\mathcal{A}= \mathbb{F}_{q} [\![ x,y ]\!] /\left(  x^{3}%
-y^{2}\right)  $ which is totally rational over $\mathbb{F}_{q}$. Observe that
$\delta=1$ and $\mathcal{J}\cong(\mathbb{F}_{q},+,0)$. By specializing
$\left[  \cdot\right]  $ to $\#\left(  \cdot\right)  $, we get $\#\left(
Z\left(  T,\mathcal{O}\right)  \right)  =Z\left(  q^{-1}T,\mathcal{A}\right)
$, where $Z\left(  T,\mathcal{A}\right)  =\frac{1-T+qT^{2}}{1-T}$ is the local
factor of the zeta function $Z\left(  \text{\textrm{Ca}}\left(  X\right)
,T\right)  $ at the origin, here $X$ is the projective curve over
$\mathbb{F}_{q}$ defined by $f\left(  x,y\right)  =x^{3}-y^{2}\in
\mathbb{F}_{q}\left[  x,y\right]  $. Note that $\lim_{q\rightarrow1}Z\left(
T,\mathcal{A}\right)  =\varsigma_{f}\left(  T\right)  $, see \cite[Example
5.6]{Z1}.

\subsection{Example\label{ej2}}

Set $\mathcal{O}=\mathbb{C}\{\!\{x,y\}\!\}/\left(  y^{2}-x^{4}+x^{5}\right)  $
and $\widetilde{\mathcal{O}}=\mathbb{C}\{\!\{t_{1}\}\!\}\times\mathbb{C}%
\{\!\{t_{2}\}\!\}$, then
\[
\mathcal{O}=\left\{  \left(
{\textstyle\sum\nolimits_{i=0}^{\infty}}
a_{i,1}t_{1}^{i},%
{\textstyle\sum\nolimits_{i=0}^{\infty}}
a_{i,2}t_{2}^{i}\right)  \in\widetilde{\mathcal{O}}\mid a_{0,1}=a_{0,2}\text{,
}a_{1,1}=a_{1,2}\right\}  .
\]
\ The conductor ideal is $\mathcal{F}=\left(  t_{1}^{2},t_{2}^{2}\right)
\widetilde{\mathcal{O}}$, and the semigroup $S$ is equal to%
\[
\left\{  \left(  0,0\right)  \right\}  \cup\left\{  \left(  1,1\right)
\right\}  \cup\left\{  \left(  k_{1},k_{2}\right)  \in\mathbb{N}^{2}\mid
k_{1}\geqslant2\text{, }k_{2}\geqslant2\right\}  .
\]
Note that $\left[  \pi_{\underline{c}-\underline{1}}\left(  \mathcal{O}%
^{\times}\right)  \right]  =\left(  \mathbb{L}-1\right)  \mathbb{L}$. The
group $\mathcal{J}$ is isomorphic to
\[
\left\{  \left(  a+bt_{1},1\right)  \mid a\in\mathbb{C}^{\times},\text{ }%
b\in\mathbb{C}\right\}  ,
\]
where the product is defined as
\[
\left(  a_{0}+b_{0}t_{1},1\right)  \left(  a_{1}+b_{1}t_{1},1\right)  =\left(
a_{0}a_{1}+\left(  a_{0}b_{1}+a_{1}b_{0}\right)  t_{1},1\right)  .
\]
Therefore $\left[  \mathcal{J}\right]  =\left(  \mathbb{L}-1\right)
\mathbb{L}$, and $\left[  \mathcal{I}_{\underline{n}}\right]  =\left[
\mathcal{J}\right]  $, for $\underline{n}\geqslant(2,2)$. To compute $\left[
\mathcal{I}_{\left(  1,1\right)  }\right]  $\ we use the fact that each $I$ in
$\mathcal{I}_{\left(  1,1\right)  }$\ corresponds to a point of $\underline
{\mu}=\left(  a+bt_{1},1\right)  \in\mathcal{J}$ such that $\left(
t_{1,}t_{2}\right)  \underline{\mu}\in\mathcal{O}$, thus we have to determine
all the $a\in\mathbb{C}^{\times}$ and $b\in\mathbb{C}$ such that
\[
\left(  t_{1},t_{2}\right)  \left(  a+bt_{1},1\right)  =\left(  at_{1}%
+bt_{1}^{2},t_{2}\right)  \in\mathcal{O}%
\]
(here the product is in $\widetilde{\mathcal{O}}$ and not in $\mathcal{J}$),
then $a=1$, $b\in\mathbb{C}$ and thus $\left[  \mathcal{I}_{\left(
1,1\right)  }\right]  =\mathbb{L}$, and $Z\left(  T_{1},T_{2},\mathcal{O}%
\right)  $ is equal to
\[
\frac{1-\mathbb{L}^{-1}T_{1}-\mathbb{L}^{-1}T_{2}+\left(  \mathbb{L}%
^{-1}+\mathbb{L}^{-2}\right)  T_{1}T_{2}-\mathbb{L}^{-2}T_{1}T_{2}%
^{2}-\mathbb{L}^{-2}T_{1}^{2}T_{2}+\mathbb{L}^{-2}T_{1}^{2}T_{2}^{2}}{\left(
1-\mathbb{L}^{-1}T_{1}\right)  \left(  1-\mathbb{L}^{-1}T_{2}\right)  }.
\]

By specializing \ $\left[  \cdot\right]  $ to $\chi\left(  \cdot\right)  $ we
have $\chi\left(  Z\left(  T,\mathcal{O}\right)  \right)  =1+T^{2}%
=\varsigma_{f}\left(  T\right)  $, that are the Alexander polynomial and the
zeta function of the monodromy of the germ of mapping $f:\mathbb{C}%
^{2}\rightarrow\mathbb{C}:$ $\left(  x,y\right)  \mapsto y^{2}-x^{4}+x^{5}$ at
the origin.

Set $\mathcal{A}=\mathbb{F}_{q} [\![ x,y ]\!] /\left(  y^{2}-x^{4}%
+x^{5}\right)  $. Observe that $\delta=2$ and $\mathcal{J}\cong((\mathbb{F}%
_{q})^{\times},\cdot)\times(\mathbb{F}_{q},+,0)$. By specializing $\left[
\cdot\right]  $ to $\#\left(  \cdot\right)  $, we obtain the equality
$\#\left(  Z\left(  T,\mathcal{O}\right)  \right)  =Z\left(  q^{-1}%
T,\mathcal{A}\right)  $, where $Z\left(  T,\mathcal{A}\right)  =\frac
{1-2T+(q+1)T^{2}-2qT^{3}+q^{2}T^{4}}{\left(  1-T\right)  ^{2}}$ is the local
factor of the zeta function $Z\left(  \mathrm{Ca}\left(  X\right)  ,T\right)
$ at the origin, here $X$ the projective curve over $\mathbb{F}_{q}$ defined
by $f\left(  x,y\right)  =y^{2}-x^{4}+x^{5}\in\mathbb{F}_{q}\left[
x,y\right]  $. Note that $\lim_{q\rightarrow1}Z\left(  T,\mathcal{A}\right)
=\varsigma_{f}\left(  T\right)  $.

\subsection{Example\label{ej3}}

Set $\mathcal{O}\mathbb{=C} \{\!\{ t^{3},t^{4},t^{5} \}\!\} $ and
$\widetilde{\mathcal{O}}=\mathbb{C} \{\!\{ t_{1} \}\!\} $. The embedding
dimension of $\mathcal{O}$\ is three. The group $\mathcal{J}$ is isomorphic to
$\left\{  1+at+bt^{2}\mid a,b\in\mathbb{C}\right\}  $, where the product is
defined as
\[
\left(  1+a_{0}t+b_{0}t^{2}\right)  \left(  1+a_{1}t+b_{1}t^{2}\right)
=1+\left(  a_{0}+a_{1}\right)  t+\left(  b_{0}+b_{1}+a_{0}a_{1}\right)
t^{2}.
\]
The zeta function of this ring is $Z\left(  T,\mathcal{O}\right)
=\frac{1-\mathbb{L}^{-1}T+\mathbb{L}^{-1}T^{3}}{1-\mathbb{L}^{-1}T}$, and
$\chi\left(  Z\left(  T,\mathcal{O}\right)  \right)  =\frac{1-T+T^{3}}{1-T}$.
This rational function should be `the monodromy zeta function of $\mathcal{O}%
$,' but this cannot be explained from the point of view of A'Campo paper
\cite{A} . It seems that the connection between $\chi\left(  Z\left(
T,\mathcal{O}\right)  \right)  $ and the \textquotedblleft topology of
$\mathcal{O}$\textquotedblright\ is not completely understood.

\end{document}